\documentclass[12]{article}
\usepackage[all]{xy}
\usepackage{amsmath, amsthm}
\usepackage{amssymb}
\usepackage[mathscr]{eucal}
\usepackage[applemac]{inputenc}
\usepackage{graphicx}
\usepackage{hyperref}

\newcommand{\N}{\mathbb{N}}
\newcommand{\CC}{\mathbb{C}}
\newcommand{\R}{\mathbb{R}}

\newcommand{\Z}{\mathbb{Z}}

\newcommand{\e}{\mathcal{E}}
\def\R{\mathbb{R}}
\def \N {\mathbb{N}}

\def \Fr {\mathrm{Fr}}

\def \cl {\mathrm{Cl}}

\def \id {\mathrm{id}}

\def \pr {\mbox{pr\,}}

\def \inc {\mbox{in\,}}
\def \incl{\mbox{in}}

\newtheorem{lemma}{ \kern \parindent Lemma}[section]

\newtheorem{definition}{ \kern \parindent Definition}[section]
\newtheorem{proposition}{ \kern \parindent Proposition}[section]
\newtheorem{corollary}{ \kern \parindent Corollary}[section]
\newtheorem{theorem}{ \kern \parindent Theorem}[section]
\newtheorem{remark}{ \kern \parindent Remark}[section]
\newtheorem{example}{ \kern \parindent Example}[section]

\def\id{\mathrm{id}}

\def\d{{d}}

\def\e{\mathbf{c}}
\def\l{\mathbf{l}}
\def\r{\mathbf{r}}

\def\t{\mathbf{t}}
\def\d{\mathbf{d}}
\def\F{\check{\pi}_0}

\def\C{\check{C}}
\def\bC{\check{\bar{C}}}
\def\Co{\check{C}_{0}}
\def\bCo{\check{\bar{C}}_{0}}

\title{
A completion construction for  continuous dynamical  systems \footnote{
\textsc{2010 Mathematics Subject Classification}:  18B99, 18A40, 37B99, 54H20.
}
\footnote
{\textsc{Key words and phrases}: Dynamical system, exterior space, exterior flow, limit flow,
end flow, completion flow.}
\footnote
{The authors acknowledge the financial support provided by Ministerio de Educación y Ciencia, grant MTM2009-12081, FEDER  and  the University of La Rioja, EGI11/55.}
}
\author{J.M. García Calcines\\
L. J. Hern\'andez Paricio \\
      M. T. Rivas Rodríguez\\
      Departamento de Matem\'atica Fundamental,\\
Universidad de La Laguna, 38271 La Laguna, Tenerife, Spain.\\
Departamento de Matem\'aticas y
Computaci\'on, \\
 Universidad de La Rioja,
c/Luis de Ulloa s/n, 26004 Logro\~no. Spain\\
}


\begin{document}

\maketitle

{\bf Abstract}  In this work  we construct the
$\Co^{\r}$-completion and $\Co^{\l}$-completion of a dynamical
system. If $X$ is a flow, we construct canonical maps $X\to
\Co^{\r}(X)$ and $X\to \Co^{\l}(X)$ and when these maps are
homeomorphism we have the class of $\Co^{\r}$-complete and
$\Co^{\l}$-complete flows, respectively. In this study we  find
out  many relations between the topological properties of the
completions and the dynamical properties of a given flow. In the
case of a complete flow this gives  interesting relations between
the topological properties (separability properties, compactness,
convergence of nets, etc.) and dynamical properties (periodic
points, omega limits, attractors, repulsors, etc.).


\section{Introduction}

Some  origins  of dynamical system and flow theory  can be established with
the pioneering work of H. Poincar{\'e} \cite{Poincare,Poincare2}
in the late XIX century studied the topological properties of
solutions of autonomous ordinary differential equations. We can
also mention the work of A. M. Lyapunov \cite{Liapunov} who
developed his theory of stability of a motion (solution) of a
system of $n$ first order ordinary differential equations. While
much of Poincar{\'e}'s work analyzed the global properties of the
system, Lyapunov's work looks at the local stability of a
dynamical system. The theory of dynamical systems reached a great
development  with the work of G.D. Birkhoff  \cite{Birkhoff}, who
may be considered as the founder of this theory. He established
two main lines in the study of dynamical systems: the topological
theory and the ergodic theory.

This paper develops some new ideas in the topological theory of
dynamical systems based in the theory of exterior spaces. A new
construction ``the $\Co$-completion of a flow" has been introduced
in this work. On the one hand, using this construction we can
clarify the interrelations between some topological properties of
the $\Co$-completion of a flow (separation, local compactness,
local path-connectness, etc) and dymamical properties of the flow
(relations between critical points, periodic points, omega limits,
attractors, etc). On the other hand, many known  results of the
class of $T_2$ compact flows in which all periodic points are
critical can be applied to the completion of a flow $X$ to obtain
nice dynamical properties of the original flow $X$.

Previously, the authors have developed some results on proper
homotopy theory and exterior homotopy theory to classify non
compact spaces and to study the shape of a compact metric space,
see \cite{EHR05,GGH98,GGH04}. Nevertheless, in this  work, our
main objective  is to construct a completion of a flow using
exterior spaces.

Firstly, for an exterior space $X$ we can  construct the limit
space  $L(X)$ and the end  space $\F(X)$. Next, the limit
space and the end space are used to construct the completion
$\Co(X)$. This construction can be considered as a generalization
of the Freudenthal compactification given in \cite{Freudenthal}.

Secondly, we introduce a hybrid structure called exterior flow
that mixes the notion of dynamical system and exterior space. In
particular, we can consider the limit space, the end  space
and the completion of an exterior flow. In the approach given in
this paper, the main key to establish a connection from dynamical
systems to exterior flows is the notion of absorbing open region
(or $\r$-exterior subset). Given a flow on space $X$, an open set
$E$  is said to be $\r$-exterior if for every $x \in X$ there is
$r_0 \in \R$ such that $r\cdot x \in E$ for $r\geq r_0$. The space
$X$ together with the family of $\r$-exterior open subsets is an
exterior flow, which is denoted by $X^{\r}$.

Finally, we can give the limit space, the end space and the completion
of a flow $X$ by applying these constructions, already developed for exterior spaces,
to the exterior flow $X^{\r}$.

An exterior space $X$ is said $\Co$-complete if the natural
transformation $X\to \Co(X)$ is an isomorphism.  For a
$\Co$-complete exterior  space $X$ the limit space $L(X)$ is isomorphic to the end  space
$\F(X)$ and
there is a class of nets
($\pi_0$-$\varepsilon(X)$-nets) which has at least
 a limit point
in $L(X)$.
Moreover, the
limit space $L(X)$ can be considered as a global ``weak
attractor'' of the exterior space $X$.

In section \ref{completionexsp} we have constructed and studied
the completion of an exterior space. We have given some
characterizations, see Theorems \ref{bijectioncero},
\ref{lpcomplete}, of $\Co$-completeness for general exterior
spaces and for locally path-connected exterior spaces. We have
also proved that if $X$ is a locally path-connected exterior space
which is first countable at infinity, then  $\Co(X)$ is
$\Co$-complete.

An interesting class of Hausdorff compact $\Co$-complete spaces is
given in Theorem \ref{completeex}. We have proved that if $X$ is a
Hausdorff compact space and $D$ is a closed totally disconnected
subspace (a Stone space), the externology $\varepsilon(X)$ of open
neighborhoods of $D$ determines a $\Co$-complete exterior space.

In Theorem \ref{condHausdorff} we have given conditions to ensure
that the limit space $L(\Co(X))$ and the completion $\Co(X)$ are
Hausdorff spaces. Under the conditions given in Theorem
\ref{condcompact}, one has that $L(\Co(X))$ and $\Co(X)$ are
compact exterior spaces.

We have proved the following result, which is a generalization of
the Freudenthal compactification:

{\bf Theorem} \ref{main} Suppose that $X$ is a locally
path-connected, connected, Hausdorff exterior space such that the
complement of a exterior open subset is compact. If for every
$x\in X\setminus L(X)$ there is a closed neighborhood $F$ such
that $X\setminus F$ is exterior, then $\Co(X)$ is a Hausdorff
compact exterior space and $L(X)$ is a closed subspace.

The completions construction developed for exterior spaces can be
applied to $\r$-exterior flows. In section
\ref{completionexterior} we have proved that the $\Co$-completion
of an $\r$-exterior flow has a canonical induced
 structure of $\r$-exterior flow,
 moreover, this $\Co$-completion has as limit flow its set of critical points which also is a minimal weak attractor.

As a consequence of these facts, the class of $\Co$-complete
$\r$-exterior flows has very interesting properties: The limit
flow agrees with the end flow which is the set of critical points
and this limit is a  global weak attractor.  When, in addition, the
$\r$-exterior flow is locally path-connected, this limit subflow
has the topology of a Hausdorff totally disconnected space. If a
$\Co$-complete $\r$-exterior flow is also $T_2$, we have that the
limit space agrees with the omega limit of all the flow which,
under these conditions, is the unique minimal global attractor. We
remark that for the class of Hausdorff compact $\Co$-complete
$\r$-exterior flows we have the additional property that the limit
has the topology of a profinite space.

In section \ref{completion}, we consider two full embeddings of
the category of flows to the category of $\r$-exterior flows
denoted by $X\to X^{\r}$ and $X\to X^{\l}$. This permits to
construct the $\Co^{\r}$-completion of a flow and the
$\Co^{\l}$-completion and the corresponding classes of
$\Co^{\r}$-complete and $\Co^{\l}$-complete flows. These classes
of flows have nice properties; for instance, the limit of  a
Hausdorff compact $\Co^{\r}$-complete ($\Co^{\l}$-complete) flow is
the closed subset of critical points which agree with the set of
periodic points and it is the minimal global attractor (repulsor). Moreover, in this case, this limit is a
Stone space. For example, for compact metrizable flows, we have
proved the following result:

 {\bf Theorem} 6.6 Let  $X$ be a locally  path-connected,  compact metric  flow. Then,
$X$ is a  $\Co^{\r}$-complete flow if and only if
  $C(X) =\overline{\Omega^{\r}(X)}$ and $C(X)$ is a Stone space .

Here  $C(X)$ is the set of critical points and $\Omega^{\r}(X)$ is the global $\omega^{\r}$-limit of $X$.
Notation and definitions can be seen in section \ref{prelidin}.

Note that for a flow $X$ with non-empty $\omega^{\r}$-limits,
$\overline{\Omega^{\r}(X)}$ is the minimal closed global
attractor. Therefore, in the context of Theorem 6.6, the set of
critical points of the flow is a minimal closed global attractor.

We point out that a slight  modification of the externologies
considered  in this paper taking as new ${\bf R}$-exterior open
subsets those open subsets $E$ such that for every $x\in X$, there
are $r\in \R$ and $U$ an open neighbourhood at $x$ with
$(r,\infty)\cdot U \subset E$, will permit to introduce new limit
flows, end flows and completions. The techniques developed
in this paper together with the new constructions will allow us to
analyze stability properties of global attractors.

We remark that one of the tools used to study the topology of
(stable) attractors is the shape theory and their associated
algebraic invariants, see for instance \cite{Gabites},
\cite{MR},\cite{MSS},\cite{Sanjurjo03},\cite{Sanjurjo11}. We
emphasize  the fact that for flows on ANR spaces, the
externologies considered in this paper are resolutions of the
limit in the sense of shape theory. Therefore, many of the results
obtained in  all recent developments  on the study of the
properties of (un)stable attractors by means of shape theory can
be related to other analogues given via the theory of exterior
spaces.

\section{Preliminaires on exterior spaces and dynamical systems}

\subsection{The category of  proper and exterior spaces}\label{exteriorspaces}

A continuous map $f:X\rightarrow Y$ is said to be proper if for
every closed compact subset $K$ of $Y$, $f^{-1}(K)$ is a compact
subset of $X$. The category of topological  spaces and the
subcategory of spaces and proper maps will be denoted by {\bf Top}
and {\bf P}, respectively. This last category and its
corresponding proper homotopy category are very useful for the
study of non compact spaces. Nevertheless, one has the problem
that ${\bf P}$ does not have enough limits and colimits and then
we can not develop the usual homotopy constructions like loops,
homotopy limits and colimits, et cetera.
An answer to this problem
is given by  the notion of exterior space. The new  category of
exterior spaces and maps is complete and cocomplete and contains
as a full subcategory the category of spaces and proper maps, see
\cite{GGH98, GGH04}. For more properties and applications of
exterior and proper homotopy categories we refer the reader to \cite{GGH01,
EHR05, DHR09, GHR10, H95}~ and for  a survey of proper homotopy to
\cite{P95}~.

\begin{definition}
Let $(X,\t )$ be a topological space, where $X$ is the subjacent
set and $\t$ its topology. An \emph{externology} on $(X,\t )$ is a
non empty collection $\varepsilon $ (also denoted by $\varepsilon
(X)$) of open subsets which is closed under finite intersections
and such that if $E\in \varepsilon $, $U\in \t$ and $E\subset U$
then $U\in \varepsilon .$ The members of $\varepsilon $ are called
exterior open subsets.
 A subfamily $\mathcal{B}\subset \varepsilon$ is said
to be a base for $\varepsilon$ if for every $E\in \varepsilon$ there
is $B\in \mathcal{B}$ such that $B\subset E$.

 An \emph{exterior space} $(X,\varepsilon,
\t )$ consists of a space $(X,\t )$ together with an externology
$\varepsilon $.

 A map $f:(X,\varepsilon, \t)\rightarrow
(X',\varepsilon', \t')$ is said to be an \emph{exterior map} if it
is continuous and $f^{-1}(E)\in \varepsilon $, for all $E\in
\varepsilon '.$
\end{definition}

The category of exterior spaces and exterior maps will be denoted by {\bf
E}. Given a space $(X, \t_{X})$, we can always consider the
trivial exterior space taking $\varepsilon = \{ X \}$ or the total
exterior space if one takes $ \varepsilon = \t_{X}$. In this paper,
we shall use the exterior space of real numbers $(\R,\r)$, where $\r$ is the externology
determined by the externology base $\{(n,+\infty)| n \in \Z\}$.
An important example of externology on a given topological space
$X$ is the one constituted by the complements of all
closed-compact subsets of $X$, that will be called the cocompact
externology and usually written as $\varepsilon ^{\e}(X).$ The
category of spaces and proper maps can be considered as a full
subcategory  of the category of exterior spaces via the full
embedding $(\cdot)^{\e}:{\bf P}\hookrightarrow {\bf E}.$ The
functor $(\cdot)^{\e}$ carries a space $X$ to the exterior space
$X^{\e}$ which is provided with the topology of $X$ and the
externology $\varepsilon ^{\e}(X)$. A map $f\colon X \to Y$ is
carried to the exterior map $f^{\e}\colon X^{\e} \to Y^{\e}$ given
by $f^{\e} = f$. It is easy to check that a continuous map
$f:X\rightarrow Y $  is proper if and only if $f=f^{\e}\colon
X^{\e} \to Y^{\e}$  is exterior.

An important role in this paper will be played by the following
construction $(\cdot)\bar \times (\cdot) $: Let $(X, \varepsilon
(X),\t_X)$ be an exterior space, $(Y,\t_Y)$ a topological space
and for $y\in Y$ we denote by $(\t_{Y})_y$ the family of open
neighborhoods of $Y$ at $y$. We consider on $X\times Y$ the
product topology $\t_{X \times Y}$ and the externology
$\varepsilon ({X\bar \times Y})$ given by those $E\in \t_{X\times
Y}$ such that for each $y\in Y$ there exists
 $U_y\in (\t_{Y})_y$ and $T^{y}\in \varepsilon ({X})$ such that
$T^{y}\times U_{y}\subset E.$ This exterior space will be denoted by
$X\bar{\times }Y$ in order to avoid a possible confusion with the natural product
externology. This construction gives a functor
$$(\cdot)\bar \times (\cdot) \colon {\bf E}\times {\bf Top}\rightarrow
{\bf E}.$$
When $Y$ is a compact space,  we have that $E$ is an exterior open
subset of $X\bar \times Y$ if and only if it is an open subset and there exists $G\in
\varepsilon ({X})$ such that $G\times Y\subset E$. Furthermore, if
$Y$ is a compact space and $\varepsilon (X) =\varepsilon
^{\e}(X),$ then $\varepsilon (X\bar{\times } Y)$ coincides with
$\varepsilon^{\e}(X\times Y)$ the externology of the complements of
closed-compact subsets of $X\times Y.$ We also note that if $Y$ is
a discrete space, then $E$ is an exterior open subset of $X\bar \times Y$ if and only
if it is open  and for each $y\in Y$ there is $T^y\in \varepsilon ({X})$ such that $T^y\times \{y\}\subset E$.

This bar construction provides a natural way to define {\em
exterior homotopy} in ${\bf E}$. Indeed, if $I=[0,1]$ denotes the closed
unit interval, given exterior maps $f,g:X\rightarrow Y,$ it is
said that $f$ \emph{is exterior homotopic to} $g$ if there exists
an exterior map $H:X\bar{\times }I\rightarrow Y$ (called exterior
homotopy) such that $H(x,0)=f(x)$ and $H(x,1)=g(x),$ for all $x\in
X.$ The corresponding homotopy category of exterior spaces will be
denoted by $\pi {\bf E}.$ Similarly, the usual homotopy category
of topological spaces will be denoted by $\pi {\bf Top}.$

\subsection{Dynamical Systems and $\Omega$-Limits}\label{prelidin}

Next we recall some elementary concepts about dynamical systems;
for more complete descriptions and properties
we refer the reader to \cite{Bhatia}.

\begin{definition}
A \emph{dynamical system} (or a \emph{flow}) on a topological  space
$X$ is a continuous map $\varphi \colon \R {\times} X\to  X$,
$\varphi(t,p)=t \cdot p$,  such that
\begin{enumerate}
\item[(i)] $\varphi(0,p)=p,$ $\forall p \in X;$
\item[(ii)] $\varphi(t,\varphi(s, p))=\varphi( t+s,p),$ $\forall p
\in X,$ $\forall t,s \in \mathbb{R}.$ \end{enumerate} A flow on
$X$ will be denoted by $(X, \varphi)$ and when no confusion is
possible, we use $X$ for short.
\end{definition}

For a subset $A \subset X$, we denote inv$(A)=\{p \in A| \R \cdot p \subset A\}$~.

\begin{definition}
A subset $S$ of a flow $X$ is said to be \emph{invariant} if
inv$(S)=S.$
\end{definition}

Given a flow $\varphi \colon \R {\times} X\to  X$ one has a
subgroup $\{\varphi_t \colon X \to X| t\in \R\}$ of
homeomorphisms, $\varphi_t(p) = \varphi(t,p)$, and a family of
motions  $\{\varphi^p\colon \R \to X| p \in X\}$, $\varphi^p(t) =
\varphi(t,p).$

\begin{definition} Given two flows  $\varphi \colon \R {\times} X\to  X$,
$\psi \colon \R {\times} Y\to  Y$, a  \emph{flow morphism}
$f\colon (X, \varphi) \to (Y, \psi)$ is a continuous map $f\colon
X \to Y$ such that $f(r\cdot p)=r\cdot f(p)$ for every $r\in \R$
and for every $p\in X$.
\end{definition}

We note that if $S \subset X$ is invariant, $S$ has a flow
structure and the inclusion is a flow morphism.

We denote by $\bf{
F}$ the category of flows  and flows morphisms.

\medskip

We recall some basic fundamental examples: (1) $X=\R$ with the action $\varphi \colon \R \times X \to X$,  $\varphi(r, s)= r+s$. (2) $X=S^1=\{z\in {\mathbb{C}}| |z|=1\}$ with  $\varphi \colon \R \times
X \to X$, $\varphi(r, z)= e^{2\pi i r}z$. (3) $X=\{0\}$ with the trivial
action $\varphi \colon \R \times X \to X$ given by
$\varphi(r, 0)= 0.$ In all these cases, the flows have only one trajectory.

\begin{definition} For a flow $X$,
the \emph{$\omega^{\r}$-limit set} (or right-limit set, or positive
limit set) of a point $p \in X$ is given as follows:
$$\omega^{\r} (p) =\{q \in X | \exists \mbox{ a net }  t_{\delta} \to +\infty \mbox{ such that }
t_{\delta}\cdot p \to q\}.$$
\end{definition}

 If $\overline{A}$ denotes the closure of a subset $A$ of a topological space,
we  note that the subset $\omega^{\r}(p)$ admits the alternative
definition:
$$\omega^{\r} (p) = \bigcap_{t \geq 0}\overline{ [t, +\infty) \cdot p}$$
which has the advantage of showing that $\omega^{\r}(p)$ is
closed.

\begin{definition}
The \emph{$\Omega^{\r}$-limit set} of a flow $X$  is the following
invariant subset:
$$\Omega^{\r} (X) =\bigcup_{p\in X} \omega^{\r}(p)$$
\end{definition}

Now we introduce the basic notions of critical, periodic and
$\r$-Poisson stable points.

\begin{definition} Let $X$ be a flow.
A point $x\in X$ is said to be  a \emph{critical point} (or a
\emph{rest point}, or an \emph{equilibrium point})  if for every
$r\in \R,$ $r\cdot x =x.$ We denote by $C(X)$ the invariant subset
of critical points of $X.$
\end{definition}

\begin{definition} Let $X$ be a flow. A point $x\in X$ is said to be
\emph{periodic} if there is $r\in \R$, $r\not = 0$ such that
$r\cdot x= x.$ We denote by $P(X)$ the invariant subset of
periodic points of $X.$
\end{definition}

It is clear that a critical point is a periodic point. Then $$C(X)
\subset P(X).$$

If $x \in X$ is a periodic point but not critical, then there exists a real number
$r\not =0$ such that $r\cdot x=x$ and $r$ is called a
\emph{period} of $x.$ The smallest positive period $r_0$ of $x$ is
called the {\it fundamental period} of $x.$ Further if $r \in \R$
is such that $r\cdot x=x,$ then there is $z\in \Z$ such that
$r=zr_0.$

 \begin{definition} Let $(X, \varphi)$ be a flow. A point $x \in X$
is said to be $\r$-\emph{Poison stable} if there is a net
$t_{\delta} \to +\infty$ such that $t_{\delta} \cdot x \to x;$ that is, $x\in
\omega^{\r}(x).$ We will denote by $P^{\r}(X)$ the invariant
subset of $\r$-Poison stable points of $X.$
\end{definition}

The reader can easily check that $$P(X)\subset P^{\r}(X) \subset
\Omega^{\r}(X).$$

\begin{definition}\label{attractoruno}  Let $(X, \varphi)$ be a flow and $M\subset X$
be an invariant subspace. The subset
$A_{\omega^{\r}}(M)=\{x \in X| \omega^{\r}(x) \cap M\not = \emptyset\} $
is called the \emph{region of weak attraction} of $M$. If $A_{\omega^{\r}}(M)$
is a neighbourhood of $M$, $M$ is said to be a \emph{weak attractor}, and  if
we also have that $A_{\omega^{\r}}(M)=X$, $M$ is said to be a \emph{global weak attractor}.
 The subset
$A_{{\r}}(M)=\{x \in X| \omega^{\r}(x)  \not = \emptyset,  \omega^{\r}(x) \subset M \}$
is called the \emph{region of  attraction} of $M$. If $A_{{\r}}(M)$
is a neighbourhood of $M$, $M$ is said to be an \emph{attractor}, and if
we also have that $A_{{\r}}(M)=X$, $M$ is said to be a \emph{global attractor}.

\end{definition}

 The notions above can be dualized to obtain the notion of the
$\omega^{\l}$-limit ($\l$ for `left') set of a point $p$, the
$\Omega^{\l}$-{ limit} of  $X$, $\l$-Poison stable points, repulsors et
cetera.

\section{The completion of an exterior space}\label{completion}\label{completionexsp}

\subsection{End Space and Limit Space of an exterior space}\label{tres}
In this section we will deal with special limits for a given
exterior space. We assume that the reader is familiarized with
this particular categorical construction. However, for a
definition and main properties of limits (of inverse systems of
spaces) we refer the reader to \cite{Engelking}. A more detailed
and complete study of some notions given in this subsection can
be seen in \cite{GHR12}.

Given an exterior space $X=(X, \varepsilon(X))$, its externology
$\varepsilon(X)$ can be seen as an inverse system of spaces, then
we  define  the limit space of $(X, \varepsilon(X))$ as the
topological space $$L(X)=\lim  \varepsilon(X).$$

Note that for each $E' \in \varepsilon(X)$  the canonical map
$\lim  \varepsilon(X) \to E' $ is continuous and factorizes as
$\lim  \varepsilon(X) \to \cap_{E\in \varepsilon(X)}E\to  E'.$
Therefore the canonical map $\lim  \varepsilon(X) \to \cap_{E\in
\varepsilon(X)}E$ is continuous. On the other side, by the
universal property of the inverse system, the family of maps $
\cap_{E\in \varepsilon(X)}E\to E',$ $E' \in \varepsilon(X)$
induces a continuous map  $\cap_{E\in \varepsilon(X)}E\to  \lim
\varepsilon(X)$~. This implies that the canonical map $\lim
\varepsilon(X) \to \cap_{E\in \varepsilon(X)}E$ is a
homeomorphism.

We recall that for a topological space $Y$, $\pi_0(Y)$ denotes the
set of path-components of $Y$~ and we have a  canonical map $q_0\colon Y\to
\pi_0(Y)$~ which induces a quotient topology on $\pi_0(Y).$
We remark
that if $Y$ is locally path-connected
then $\pi_0(Y)$
is a discrete space.

\begin{definition} Given an exterior space $X=(X, \varepsilon(X))$
the \emph{limit space} of $X$ is the topological subspace
$$L(X)=\lim \varepsilon(X)= \cap_{E\in \varepsilon(X)}E.$$

The \emph{end space} of $X$ is the inverse limit
$$\F(X)=\lim \pi_0 \varepsilon(X)=\lim_{E\in \varepsilon(X)}\pi_0(E)$$
provided with the inverse limit topology of the spaces $\pi_0(E).$

\end{definition}


Note that an end point $a\in \F(X)$ can be represented by  $a=(C_E)_{E\in \varepsilon(X)}$,
where $C_E$ is a path-component of $E$ and if $E\subset E'$, $C_E\subset C_{E'}$.
In this paper the canonical maps associated to the limit inverse construction will be denoted by
$\eta_0\colon \F(X) \to \pi_0(E)$ (if necessary by $\eta_{0,E}$) given by $\eta_0((C_{E'})_{E'\in \varepsilon(X)})=C_E$.
If $E\subset E'$, we denote by $\eta_{E'}^E \colon \pi_0(E)\to  \pi_0(E')$ the natural induced map.

We note that our notion of end point for exterior spaces described by using path-components generalizes
the notion of ideal point introduced by Ker\'ekj\'art\'o \cite{Kererjarto} for surfaces and by Freudenthal \cite{Freudenthal}
for more general spaces.

\begin{definition} Given an exterior space $X=(X, \varepsilon(X))$,
the \emph{bar-limit space} of $X$ is the topological subspace
$$\bar L(X)=\lim_{E\in \varepsilon(X)}\overline{E}= \cap_{E\in \varepsilon(X)}\overline{E}.$$

\end{definition}

\begin{remark} We note that if an externology $\varepsilon(X)$ satisfies the additional condition that
the $\emptyset \not \in \varepsilon(X)$, then  $\varepsilon(X)$ is a filter of open subsets. The usual
notion of the set of cluster points of a filter can also be considered for externologies. In this way,
we can say that $\bar L(X)$ is the set of cluster points of the externolgy $\varepsilon(X)$.

We note that $ L(X),{\bar L}(X)$ also have  natural structures of exterior space with the relative externologies.
The relative externology on $L(X)$ is trivial, but this fact is not necessarily true  on ${\bar L}(X)$.
\end{remark}

It is interesting to observe that if $X$ is an exterior space and
$X$ is locally path-connected,
$\F(X)$
is a prodiscrete space.
On the other hand,
given any exterior space $(X, \varepsilon(X))$, we have a
canonical continuous map
$$e_0 \colon L(X) \to \F(X)$$
and a canonical inclusion

$$L(X) \to \bar L(X)$$

\begin{definition} Given an exterior space $X=(X, \varepsilon(X))$,
an end point $a\in \F(X)$
is said to be
$e_0$-\emph{representable}
if there is $p\in L(X)$ such that  $e_0(p)=a$.
Notice that the map $e_0\colon L(X)\to \F(X)$
induce an \emph{$ e_0$-decomposition}
$$L(X)= \bigsqcup_{a \in  \F(X)} L_a^0(X)$$
where $L_a^0(X)=
e_0^{-1}(a)$.
This  subset will be
 called the \emph{$e_0$-component} of the end $a\in \F(X)$.
\end{definition}

If we denote by $e_0L(X)$ the subset of
representable end points. It is clear that
$$ L(X)= \bigsqcup_{a \in e_0L(X)} L_a^0(X)$$

\begin{example} \quad Let $M\colon \R \to (0,1)$ be an increasing
continuous map such that $\lim_{t\to -\infty}M(t)=0$ and
$\lim_{t\to +\infty}M(t)=1$ and take $A=\{e^{2\pi i t}|t\in \R\}$,
$B=\{M(t)e^{2\pi i t}|t\in \R\}$. Consider $X=A\cup B\subset \CC$
provided with the relative topology (observe that $X$ is not
locally connected). On the topological space $X$ the  flow
$\varphi \colon \R \times X \to X$ is given by $\varphi (r,e^{2\pi
i t})=e^{2\pi i (r+t)}$, $\varphi (r,M(t) e^{2\pi it})=M(r+t)
e^{2\pi i (r+t)}$. It is clear that this flow has two trajectories
$A,B$. If for each natural number $n$ we denote $B_n=\{M(t)e^{2\pi
i t}|t\geq n\}$,
we can take the externology $\varepsilon(X)=\{E\in \t_X| \exists n \mbox{ such that } A\cup B_n \subset E\}$.
Since $A,B_n$ are
path-connected, it follows
that $\pi_0(A\cup B_n)=\{A,B_n\}$. Therefore, one can check that
 $$\check \pi_0(X)=\{*_A, *_B\}$$
 For this example we have  $L(X)=A$, the $e_0$-decomposition
 $$L_{*_A}^0=A, \quad L_{*_B}^0=\emptyset$$
 This means  that $*_B $ is not $e_0$-representable. Note that
  $\overline{B_n}=E_n$ is connected. This implies that if we take the set of connected components
  of an exterior open subset instead of the set of path-components, the corresponding inverse limit
  will have only one point.
\end{example}

\begin{remark} From the definition of $e_0$-representable end it follows that
$e_0 \colon L(X) \to \F(X)$ is surjective if and only if all end points are $e_0$-representable.
\end{remark}

\begin{proposition}\label{bijective} Let $X$ be an exterior space and consider the natural transformation
$e_0 \colon L(X) \to \F(X)$. Then, the following conditions are equivalent:
\begin{itemize}
\item[(i)] $e_0 \colon L(X) \to \F(X)$ is injective.
\item[(ii)] For every $x, x' \in L(X), x\not = x'$, there is $E\in \varepsilon(X)$ and $C, C'$ path-components of $E$
such that $x\in C$, $x'\in C'$ and $C \cap C' = \emptyset$.
\item[(iii)] For every $x \in L(X)$,  $\bigcap_{E \in \varepsilon(X)} C_E(x)=\{x\}$, where $C_E(x)$ denotes the path-component of
$x$ in $E$
\end{itemize}
\end{proposition}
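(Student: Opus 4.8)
The plan is to prove the chain $(i)\Rightarrow(ii)\Rightarrow(iii)\Rightarrow(i)$, since each implication is essentially an unwinding of the definition of $e_0$ together with the description of $\F(X)$ in terms of coherent families of path-components. Recall that a point of $L(X)=\cap_{E\in\varepsilon(X)}E$ maps under $e_0$ to the family $(C_E(x))_{E\in\varepsilon(X)}$, where $C_E(x)$ is the path-component of $x$ in $E$; this family is coherent because $E\subset E'$ forces $C_E(x)\subset C_{E'}(x)$.

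First I would show $(i)\Rightarrow(ii)$ by contraposition. Suppose $x\neq x'$ in $L(X)$ but for every $E\in\varepsilon(X)$ the path-components of $x$ and $x'$ in $E$ coincide (note that if two path-components of $E$ meet they are equal, so the negation of (ii) says exactly $C_E(x)=C_E(x')$ for all $E$). Then the coherent families $(C_E(x))_E$ and $(C_E(x'))_E$ are literally the same element of $\lim_{E}\pi_0(E)=\F(X)$, i.e. $e_0(x)=e_0(x')$, contradicting injectivity. Next, $(ii)\Rightarrow(iii)$: fix $x\in L(X)$ and suppose $x'\in\bigcap_{E}C_E(x)$ with $x'\neq x$. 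Since $x'$ lies in $C_E(x)$ for every $E$, in particular $x'\in E$ for every $E$, so $x'\in L(X)$; moreover $C_E(x')=C_E(x)$ for every $E$ (they share the point $x'$), so $C_E(x)\cap C_E(x')=C_E(x)\neq\emptyset$ for all $E$, which directly contradicts (ii). Hence $\bigcap_E C_E(x)=\{x\}$. Finally, $(iii)\Rightarrow(i)$: if $x,x'\in L(X)$ satisfy $e_0(x)=e_0(x')$, then by definition $C_E(x)=C_E(x')$ for every $E\in\varepsilon(X)$; in particular $x'\in C_E(x)$ for every $E$, so $x'\in\bigcap_E C_E(x)=\{x\}$ by (iii), giving $x=x'$.

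I do not expect a serious obstacle here; the only point requiring a little care is the bookkeeping identification ``$C_E(x)\cap C_E(x')\neq\emptyset\iff C_E(x)=C_E(x')$'', which holds because path-components partition $E$, and the observation that membership of $x'$ in every $C_E(x)$ automatically places $x'$ in $L(X)=\cap_E E$, so that the three conditions are genuinely comparing points of the same space $L(X)$. One should also remember that $\varepsilon(X)$ is closed under finite intersection, so the coherent families indexing $\F(X)$ are filtered; this is implicitly used when asserting that equality of all the $C_E$-coordinates is the same as equality in the inverse limit. With these remarks in place the three implications are immediate, and the equivalence follows.
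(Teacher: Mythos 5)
Your proof is correct: the paper states this proposition without any proof, and your cyclic argument $(i)\Rightarrow(ii)\Rightarrow(iii)\Rightarrow(i)$ supplies exactly the routine unwinding of the definitions of $e_0$ and of equality in $\lim_E \pi_0(E)$ that the authors leave implicit. The two points you flag — that disjointness of path-components of the same $E$ is equivalent to their being distinct, and that $x'\in\bigcap_E C_E(x)$ forces $x'\in L(X)$ — are indeed the only places where any care is needed, and you handle both correctly.
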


Given an exterior space $X=(X, \varepsilon(X))$ and  $E\in \varepsilon(X)$,  a subset  $W\subset E$ is said to be $q_0$-saturated if
$q_0^{-1}(q_0(W))=W$; that is,  $W$ is a union of path-components of $E$.

\begin{proposition}\label{bijective} Let $X$ be an exterior space and suppose that  the natural transformation
$e_0 \colon L(X) \to \F(X)$ is a bijection. If for every $x\in L(X)$, $x \in U$,
$U$ open in $X$, there is $E\in \varepsilon (X)$ and $W$ $q_0$-saturated open subset  in $E$ such that
$x\in W \subset U$, then $e_0 \colon L(X) \to \F(X)$ is an open map. Therefore, under these conditions, $e_0 \colon L(X) \to \F(X)$ is an
homeomorphism.
\end{proposition}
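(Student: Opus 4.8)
The plan is to show $e_0$ is an open map, since it is already assumed to be a bijection; once we have both, the inverse is automatically continuous and $e_0$ is a homeomorphism. So the real content is the openness of $e_0$.

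\textbf{Setting up the topology on $\F(X)$.} First I would recall that $\F(X)=\lim_{E\in\varepsilon(X)}\pi_0(E)$ carries the inverse limit topology, so a basis of open sets is given by preimages $\eta_{0,E}^{-1}(V)$ for $E\in\varepsilon(X)$ and $V\subset\pi_0(E)$ open (equivalently, $V$ an arbitrary subset, since each $\pi_0(E)$ is a quotient and we should be careful whether it is discrete — in the locally path-connected case it is discrete, but in general we only know $V$ is open in the quotient topology, i.e. $q_0^{-1}(V)$ is open in $E$). Dually, $L(X)=\bigcap_{E\in\varepsilon(X)}E$ has the subspace topology from $X$, so a neighbourhood basis at a point $x\in L(X)$ is given by sets of the form $U\cap L(X)$ with $U$ open in $X$ and $x\in U$. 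The hypothesis of the proposition is precisely what lets us refine such a $U$ to a $q_0$-saturated open $W\subset E\in\varepsilon(X)$ with $x\in W\subset U$.

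\textbf{The openness argument.} Take $x\in L(X)$ and a basic open neighbourhood $U\cap L(X)$ of $x$ in $L(X)$. By hypothesis choose $E\in\varepsilon(X)$ and a $q_0$-saturated open $W\subset E$ with $x\in W\subset U$. Since $W$ is $q_0$-saturated, $W$ is a union of path-components of $E$, so $q_0(W)=:V$ is an open subset of $\pi_0(E)$ (because $q_0^{-1}(V)=W$ is open) and $q_0^{-1}(V)=W$. I then claim $e_0\big(U\cap L(X)\big)\supset \eta_{0,E}^{-1}(V)\cap e_0(L(X))$... but actually since $e_0$ is a bijection it's cleaner to show $e_0(W\cap L(X)) = \eta_{0,E}^{-1}(V)$, which suffices because $\eta_{0,E}^{-1}(V)$ is open in $\F(X)$ and is contained in $e_0(U\cap L(X))$. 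For the inclusion $e_0(W\cap L(X))\subset\eta_{0,E}^{-1}(V)$: if $p\in W\cap L(X)$ then its image $e_0(p)$ has $E$-component $\eta_{0,E}(e_0(p))=$ the path-component $C_E(p)$ of $p$ in $E$, which meets $W$ at $p$ and hence (by saturation) lies in $W$, so $C_E(p)\in q_0(W)=V$, i.e. $e_0(p)\in\eta_{0,E}^{-1}(V)$. For the reverse inclusion, use bijectivity: given $a\in\eta_{0,E}^{-1}(V)$, by surjectivity write $a=e_0(p)$ with $p\in L(X)$; then $C_E(p)=\eta_{0,E}(a)\in V=q_0(W)$, so $C_E(p)\subset W$ (saturation), hence $p\in W\cap L(X)$ and $a\in e_0(W\cap L(X))$. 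Since $W\cap L(X)\subset U\cap L(X)$, we get an open set $\eta_{0,E}^{-1}(V)\subset\F(X)$ with $e_0(p)\in\eta_{0,E}^{-1}(V)\subset e_0(U\cap L(X))$, proving $e_0(U\cap L(X))$ is a neighbourhood of each of its points, hence open.

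\textbf{Conclusion and the main obstacle.} Having shown $e_0$ is a continuous open bijection, it is a homeomorphism. The step I expect to require the most care is verifying that $q_0(W)$ is genuinely open in $\pi_0(E)$ and that preimages $\eta_{0,E}^{-1}(q_0(W))$ form the right kind of open sets in the inverse-limit topology of $\F(X)$ — i.e. bookkeeping the two layers of quotient/limit topologies — together with the careful use of saturation to pass between "a path-component meets $W$" and "a path-component is contained in $W$." Bijectivity (both injectivity and surjectivity) of $e_0$ is used essentially: surjectivity to produce preimages of arbitrary ends in $\eta_{0,E}^{-1}(V)$, and the overall bijection to upgrade "open map" to "homeomorphism." Everything else is routine once the topologies are unwound.
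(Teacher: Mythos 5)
Your proof is correct and follows essentially the same route as the paper's: given $x\in U\cap L(X)$, pick the $q_0$-saturated open $W\subset E$ with $x\in W\subset U$ and observe that $e_0(W\cap L(X))=\eta_{0,E}^{-1}(q_0(W))$ is an open subset of $\F(X)$ sandwiched inside $e_0(U\cap L(X))$. The only difference is that you verify both inclusions of that set equality explicitly (using surjectivity of $e_0$ for the reverse one), where the paper simply asserts it.
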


\begin{proof} Suppose that $x\in L(X)$ and $x \in U$, where $U$ is open in $X$. By hypothesis,
there is $E\in \varepsilon(X)$ and $W $ $q_0$-saturated open subset  in $E$ such that $x\in W \subset U$.
Then $e_0(x)\in e_0(W\cap L(X))\subset e_0(U\cap L(X))$. Since $W$ is $q_0$-saturated,
$e_0(W\cap L(X))=\eta_0^{-1}(q_0(W))$ is an open subset of $\F(X)$.
\end{proof}

An interesting property of $\F(X)$ is given in the next result, whose proof is contained in
Theorem 3.17 of  \cite{GHR12}. By a locally locally compact at
infinity exterior space we mean an exterior space such that for every $E\in \varepsilon(X)$,
there is  $E'\in \varepsilon(X)$ with $\overline{E'}\subset E$ and $\overline{E'}$ compact.

\begin{proposition}\label{masmasigual} Let $X=(X,\varepsilon(X))$ be an exterior space and
suppose that $X$ is locally path-connected and locally compact at
infinity. Then,
$\F(X)$ is a profinite compact space.\end{proposition}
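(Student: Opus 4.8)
The plan is to realize $\F(X)$ as an inverse limit of compact spaces, and then invoke the standard fact that an inverse limit of nonempty compact Hausdorff spaces is compact (and nonempty), together with the fact that a limit of finite discrete spaces is profinite. Recall that $\F(X)=\lim_{E\in\varepsilon(X)}\pi_0(E)$. First I would observe that under local path-connectedness each $\pi_0(E)$ carries the discrete topology, so $\F(X)$ is automatically a prodiscrete space, as already noted in the excerpt. The real content is to upgrade ``prodiscrete'' to ``profinite and compact'', i.e.\ to show that the index system can be replaced by a cofinal subsystem in which every $\pi_0(E)$ is \emph{finite}; profiniteness and compactness then follow formally (a cofiltered limit of finite discrete spaces).

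The key step is therefore a finiteness argument for path-components, and this is where local compactness at infinity enters. Given $E\in\varepsilon(X)$, choose by hypothesis $E'\in\varepsilon(X)$ with $\overline{E'}\subset E$ and $\overline{E'}$ compact; iterating, choose $E''\in\varepsilon(X)$ with $\overline{E''}\subset E'$ and $\overline{E''}$ compact. I claim the image of $\pi_0(E'')\to\pi_0(E')$ — equivalently, the set of path-components of $E'$ that meet $\overline{E''}$ — is finite. Indeed, $\overline{E''}$ is compact and, since $X$ is locally path-connected, $E'$ is locally path-connected, so the path-components of $E'$ are open; they form an open cover of the compact set $\overline{E''}$ by pairwise disjoint sets, hence only finitely many of them meet $\overline{E''}$. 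Since $E''\supset$ some member of a cofinal family and the components of $E''$ all lie inside components of $E'$ meeting $\overline{E''}$, this bounds the relevant part of the system. Passing to the cofinal subsystem indexed by such nested triples $E''\subset E'\subset E$ with compact closures, one gets that $\F(X)$ is the limit of a cofiltered diagram whose bonding maps have finite image; replacing each space by that finite image (which does not change the limit, by cofinality of the images in a cofiltered system) exhibits $\F(X)$ as a cofiltered limit of finite discrete spaces.

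From there the conclusion is formal: a cofiltered limit of finite (hence compact Hausdorff) spaces is compact Hausdorff by Tychonoff plus the closedness of the limit in the product, and it is totally disconnected, so $\F(X)$ is a profinite compact space. The main obstacle I anticipate is the bookkeeping to make the cofinal reindexing precise: one must check that triples $(E'',E',E)$ with $\overline{E''}\subset E'$, $\overline{E'}\subset E$ and both closures compact are cofinal in $\varepsilon(X)$ (which is exactly the hypothesis of local compactness at infinity, applied twice), and that restricting each term of the tower to the ``stable'' image of the bonding maps leaves the inverse limit unchanged. Both are routine but need care; everything else is either the local-path-connectedness observation (components open) or the classical compactness of profinite limits. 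As the excerpt points out, the detailed argument is Theorem 3.17 of \cite{GHR12}, so it suffices here to indicate these steps.
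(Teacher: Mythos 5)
Your proposal is correct and is essentially the standard argument the paper relies on: it matches both the cited Theorem 3.17 of \cite{GHR12} and the finiteness arguments the paper itself uses elsewhere (the image of $\pi_0(E_2)\to\pi_0(E_1)$ being finite via compactness and the openness of path-components in the proof of Theorem \ref{main}, and the ``inverse limit of finite discrete spaces is profinite, hence compact'' step in Theorem \ref{condcompact}). The only caveat is the reindexing/stable-image bookkeeping you flag yourself, which is indeed routine (alternatively, one can simply note that $\eta_{0,E}(\F(X))$ lands in the finite set of components of $E$ meeting $\overline{E'}$, so $\F(X)$ is a closed subspace of a product of finite discrete spaces).
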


On the other hand,  if $X,Y$ are exterior spaces, and $f\colon X
\to Y$ is an exterior map, then $f$  induces continuous maps
$L(f)\colon L(X) \to L(Y)$, $\F(f)\colon \F(X) \to \F(Y)$.
It is not difficult to check that
$L$ preserves exterior homotopies and $\F$
is invariant by
exterior homotopy:

\begin{lemma}\label{lemahomotopia} Suppose that $X$ and $Y$ are exterior spaces and
$f,g \colon X\to Y$ exterior maps.
\begin{itemize}
\item[(i)] If $H \colon X\bar \times I \to Y$ is an exterior homotopy from $f$ to $g$,
then $L(H)=H|_{L(X) \times I} \colon  L(X\bar \times I)=L(X)\times
I  \to  L(Y)$ is a homotopy from $L(f)$ to $L(g).$
\item[(ii)] If $f$ is exterior homotopic to $g$, then $\F(f)=\F(g)$.
\end{itemize}
\end{lemma}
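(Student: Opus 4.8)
The plan is to prove both parts directly from the definitions, using the functoriality established just above the statement and the explicit description of the bar construction when the second factor is compact.

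\medskip

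\textbf{Part (i).} First I would recall that for the compact space $I=[0,1]$ the bar construction simplifies: by the remark following the definition of $(\cdot)\bar\times(\cdot)$, a subset $E\subset X\times I$ is exterior in $X\bar\times I$ if and only if it is open and there exists $G\in\varepsilon(X)$ with $G\times I\subset E$. Hence a base for $\varepsilon(X\bar\times I)$ is given by the sets $G\times I$ with $G\in\varepsilon(X)$, and therefore
\[
L(X\bar\times I)=\bigcap_{E\in\varepsilon(X\bar\times I)}E=\bigcap_{G\in\varepsilon(X)}(G\times I)=\Bigl(\bigcap_{G\in\varepsilon(X)}G\Bigr)\times I=L(X)\times I,
\]
the homeomorphism being the obvious one. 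Next I would check that the projection $X\bar\times I\to X$ and the two inclusions $X\to X\bar\times I$ at levels $0$ and $1$ are exterior maps (immediate from the base description), so that $L$ applied to them gives the projection $L(X)\times I\to L(X)$ and the two inclusions $L(X)\to L(X)\times I$; this is what makes $L(H)$ literally the restriction $H|_{L(X)\times I}$ and identifies its endpoints with $L(f)$ and $L(g)$. Finally, since $H$ is continuous and $L(X)\times I$ carries the subspace topology from $X\times I$, the restriction $L(H)$ is continuous, so it is a genuine homotopy from $L(f)$ to $L(g)$. The only mildly delicate point is the identification $L(X\bar\times I)=L(X)\times I$ as topological spaces, which follows because the subspace topology on $\bigcap_G G\subset X$ times the topology on $I$ equals the subspace topology on $\bigl(\bigcap_G G\bigr)\times I\subset X\times I$.

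\medskip

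\textbf{Part (ii).} This will follow from part (i) by applying the path-component functor $\pi_0$ and passing to the inverse limit, but I must be careful because $\pi_0$ of a homotopy is what kills the dependence on the endpoint. The argument is: for each $E'\in\varepsilon(Y)$, the composite $q_0\circ(H|_{\,\text{suitable exterior set}})$ descends, after choosing $G\in\varepsilon(X)$ with $G\times I\subset H^{-1}(E')$, to a map on $\pi_0(G)$; concretely, for a path-component $C$ of $G$, the set $H(C\times I)$ is path-connected in $E'$ (continuous image of the path-connected set $C\times I$), hence lies in a single path-component of $E'$, and that component contains both $f(C)$ and $g(C)$. Thus $\eta^{G}_{E'}\circ\pi_0(f)|_{\pi_0(G)}$ and $\eta^{G}_{E'}\circ\pi_0(g)|_{\pi_0(G)}$ agree as maps $\pi_0(G)\to\pi_0(E')$; passing to the inverse limit over $\varepsilon(X)$ and $\varepsilon(Y)$ and using the defining maps of $\F(X)$, $\F(Y)$ gives $\F(f)=\F(g)$. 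The cleanest packaging is to invoke (i) together with the already-noted fact that $\F$ is functorial: $\F$ sends the homotopy $H$ to a map $\F(X\bar\times I)\to\F(Y)$, and one checks $\F(X\bar\times I)\cong\F(X)$ because $\pi_0(G\times I)=\pi_0(G)$ for every $G$ (as $I$ is path-connected and contractible, $\pi_0(G\times I)\to\pi_0(G)$ is a bijection compatible with the bonding maps), so the two inclusions at levels $0,1$ induce the \emph{same} map on $\F$, forcing $\F(f)=\F(i_0)^{*}\circ\F(H)=\F(i_1)^{*}\circ\F(H)=\F(g)$.

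\medskip

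\textbf{Main obstacle.} The routine verifications (that the relevant maps are exterior, that products commute with the intersections defining $L$) are straightforward. The one step that deserves genuine care is the identification $\F(X\bar\times I)\cong\F(X)$ used in the slick version of (ii): one must verify that the isomorphisms $\pi_0(G\times I)\xrightarrow{\ \cong\ }\pi_0(G)$ are natural in $G\in\varepsilon(X)$ and therefore assemble into an isomorphism of inverse systems, and — crucially — that the two inclusions $X\rightrightarrows X\bar\times I$ induce, after this identification, the identity on $\F(X)$. If one prefers to avoid this, the elementary argument in the previous paragraph (tracking path-components of a fixed $G$ with $G\times I\subset H^{-1}(E')$) works directly and is the fallback I would actually write out.
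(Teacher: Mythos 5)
Your proof is correct. The paper itself offers no argument for this lemma (it is stated as a routine verification immediately after the remark that ``$L$ preserves exterior homotopies and $\F$ is invariant by exterior homotopy''), and what you have written is exactly the intended check: the identification $L(X\bar\times I)=L(X)\times I$ via the base $\{G\times I\mid G\in\varepsilon(X)\}$ of $\varepsilon(X\bar\times I)$ for compact $I$, and for (ii) the observation that $H(C\times I)$ is a path-connected subset of $E'$ containing both $f(C)$ and $g(C)$, which forces $\F(f)=\F(g)$ componentwise; your fallback elementary argument for (ii) is the safest formulation and suffices.
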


As a consequence of this lemma one has:

\begin{proposition}\label{homotopy} The functors $L, \F
\colon {\bf E}
\to {\bf Top}$ induce functors  $$L\colon \pi {\bf E} \to \pi{\bf
Top},\hspace{15pt}\F
 \colon \pi{\bf E} \to {\bf Top}.$$
\end{proposition}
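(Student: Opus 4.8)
The plan is to deduce the result directly from Lemma~\ref{lemahomotopia}, since that lemma already contains essentially all the homotopy-theoretic content; what remains is the bookkeeping needed to pass to the quotient categories $\pi{\bf E}$ and $\pi{\bf Top}$. First I would recall that $\pi{\bf E}$ has the same objects as ${\bf E}$ and morphisms the exterior-homotopy classes $[f]$ of exterior maps, and similarly for $\pi{\bf Top}$; exterior homotopy is an equivalence relation compatible with composition (this is the standard consequence of the bar construction $X\bar\times I$ being a cylinder object in ${\bf E}$, already used implicitly in the excerpt when $\pi{\bf E}$ is defined), so these quotient categories are well defined.

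For the functor $L$: by Lemma~\ref{lemahomotopia}(i), if $f$ is exterior homotopic to $g$ via $H$, then $L(H)$ restricted appropriately is an ordinary homotopy from $L(f)$ to $L(g)$, using the identification $L(X\bar\times I)=L(X)\times I$. Hence $f\simeq g$ in ${\bf E}$ implies $L(f)\simeq L(g)$ in ${\bf Top}$, so the assignment $[f]\mapsto [L(f)]$ is well defined on morphism classes. Since $L$ already preserves identities and composition at the level of ${\bf E}$ and ${\bf Top}$, these identities descend to the quotients, giving the induced functor $L\colon \pi{\bf E}\to\pi{\bf Top}$ with $L[f]=[L(f)]$.

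For the functor $\F$: by Lemma~\ref{lemahomotopia}(ii), $f$ exterior homotopic to $g$ forces the \emph{equality} $\F(f)=\F(g)$, which is even stronger than what is needed. Thus $[f]\mapsto \F(f)$ is a well-defined map on morphism classes, landing in honest morphisms of ${\bf Top}$ (not just homotopy classes), and again functoriality is inherited from $\F\colon{\bf E}\to{\bf Top}$. This yields $\F\colon\pi{\bf E}\to{\bf Top}$ with $\F[f]=\F(f)$, and one checks trivially that it is compatible with the quotient functor $\pi{\bf Top}\to{\bf Top}$ composed with $L$ only insofar as both restrict to $\F,L$ on objects.

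The only point requiring a moment's care — and the step I would expect to be the mild obstacle — is the identification $L(X\bar\times I)=L(X)\times I$ used in part~(i): one must verify that intersecting the externology $\varepsilon(X\bar\times I)$ over all its members yields exactly $\bigl(\bigcap_{E\in\varepsilon(X)}E\bigr)\times I$, i.e.\ that the ``bar'' externology on $X\bar\times I$ has limit space $L(X)\times I$. This follows because $I$ is compact, so by the remark in Section~\ref{exteriorspaces} the exterior open subsets of $X\bar\times I$ are precisely the open sets containing some $G\times I$ with $G\in\varepsilon(X)$; intersecting these gives $\bigl(\bigcap G\bigr)\times I = L(X)\times I$ with the product topology, which is exactly the content of Lemma~\ref{lemahomotopia}(i) and may simply be cited. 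With this identification in hand the rest is formal, so the proof reduces to assembling the observations above.
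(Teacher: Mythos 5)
Your proposal is correct and follows the same route as the paper, which presents Proposition~\ref{homotopy} as an immediate consequence of Lemma~\ref{lemahomotopia} (the paper literally introduces it with ``As a consequence of this lemma one has'' and offers no further proof). Your elaboration of the bookkeeping — well-definedness on exterior-homotopy classes via part (i), and the stronger equality $\F(f)=\F(g)$ from part (ii) — together with the verification that $L(X\bar\times I)=L(X)\times I$ using compactness of $I$, is exactly the intended argument.
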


\subsection{The functor $\Co  \colon {\bf E} \to {\bf E}$}\label{exteriorcompletion}

In this section, we develop the main construction of this paper: the completion of an exterior space.
Later, we will apply this technique to construct some completions of flows.

Given an exterior space $X=(X, \varepsilon(X))$, we can take the following push-out square
$$\xymatrix{L(X) \ar[r]^{e_0} \ar[d] & \F(X) \ar[d]^{\inc_0}\\
X \ar[r]^-{p_0} & X\cup_{L(X)}\F(X) }
$$
where $L(X) \to X$ is the canonical inclusion and $p_0 \colon X \to X\cup_{L(X)}\F(X)$ and
${\inc_0}\colon \F(X) \to X\cup_{L(X)}\F(X)$ are the canonical  continuous maps of the push-out.

We can consider the push-out topology: $V \subset X\cup_{L(X)}\F(X)$ is open
if $p_0^{-1}(V)$ is open in $X$ and $\incl_0^{-1}(V)$ is open in $\F(X)$.
%

\begin{example}\label{sum} For  $(\R, \r)$ we have: $L(X)=\emptyset$, the topology of  $ X\cup_{L(X)}\F(X)$ is the disjoint
sum of $\R$ and $\{\infty\}$~.
\end{example}

In order to have a good relation between the neighborhoods of an
end point and its corresponding family of path-components, we
reduce the family of open subsets of the push-out topology to a
new family $\mathcal{G}_0$. Recall that for each $E\in
\varepsilon(X)$ we have the canonical maps:

$$
\xymatrix{  &E\ar[d]^{q_0}\\
\F(X)\ar[r]^{\eta_0}& \pi_0 (E) }
$$

Then, given  $W\subset  X\cup_{L(X)}\F(X)$, $W\in \mathcal{G}_0$
if and only if it satisfies:

 \begin{itemize}

\item[(i)] $p_0^{-1}(W)$ is open in $X$, $\incl_0^{-1}(W)$ is open in $\F(X)$, and

\item[(ii)] for each $a \in \incl_0^{-1}(W)$  there is $E\in \varepsilon(X)$ and there is $G$ an open in $\pi_0(E)$ such that
$a\in \eta_0^{-1}(G)\subset \incl_0^{-1}(W)$,
$q_0^{-1}(G) \subset p_0^{-1}(W)$.

 \end{itemize}

\begin{proposition} Given an exterior space $(X, \varepsilon(X))$, then
 the family $\mathcal{G}_0$  of subsets of $X\cup_{L(X)}\F(X)$ is a topology. Moreover,
  $ \incl_0\colon \F(X) \to \incl_0(\F(X))$ is a homeomorphism.
\end{proposition}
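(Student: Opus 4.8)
The plan is to verify the two assertions separately, first that $\mathcal{G}_0$ is a topology and second that $\incl_0$ is an embedding onto its image.

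\medskip

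\textbf{Step 1: $\mathcal{G}_0$ is a topology.} First I would check that $\emptyset$ and $X\cup_{L(X)}\F(X)$ lie in $\mathcal{G}_0$, which is immediate since condition (i) holds trivially and condition (ii) is vacuous for $\emptyset$ and satisfied for the whole space by taking $E$ arbitrary and $G=\pi_0(E)$. Next I would show closure under arbitrary unions: if $\{W_j\}_{j\in J}\subset\mathcal{G}_0$ and $W=\bigcup_j W_j$, then $p_0^{-1}(W)=\bigcup_j p_0^{-1}(W_j)$ is open in $X$ and likewise $\incl_0^{-1}(W)$ is open in $\F(X)$, giving (i); for (ii), any $a\in\incl_0^{-1}(W)$ lies in some $\incl_0^{-1}(W_j)$, and the witnessing $E$ and $G$ for $W_j$ also witness the condition for $W$ because $\incl_0^{-1}(W_j)\subset\incl_0^{-1}(W)$ and $p_0^{-1}(W_j)\subset p_0^{-1}(W)$. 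The only slightly delicate part is closure under finite intersections: given $W_1,W_2\in\mathcal{G}_0$ and $a\in\incl_0^{-1}(W_1\cap W_2)$, I get $E_1,G_1$ and $E_2,G_2$ from the two memberships; I would set $E=E_1\cap E_2\in\varepsilon(X)$ and take $G=\bigl(\eta_{0,E}\bigr)\bigl(\eta_{0,E_1}^{-1}(G_1)\cap\eta_{0,E_2}^{-1}(G_2)\bigr)$-saturated, or more cleanly let $G\subset\pi_0(E)$ be the set of path-components $C$ of $E$ whose image in $\pi_0(E_i)$ lies in $G_i$ for $i=1,2$. One checks $G$ is open in $\pi_0(E)$ using that $\pi_0(E)\to\pi_0(E_i)$ is the map induced by inclusion and that $\pi_0$ carries the quotient topology; then $\eta_0^{-1}(G)=\eta_{0,E_1}^{-1}(G_1)\cap\eta_{0,E_2}^{-1}(G_2)$ contains $a$ and sits inside $\incl_0^{-1}(W_1\cap W_2)$, while $q_0^{-1}(G)\subset q_{0,E_1}^{-1}(G_1)\cap q_{0,E_2}^{-1}(G_2)\subset p_0^{-1}(W_1)\cap p_0^{-1}(W_2)$, using compatibility of the $q_0$ maps under restriction. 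This is the main bookkeeping obstacle.

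\medskip

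\textbf{Step 2: $\incl_0\colon\F(X)\to\incl_0(\F(X))$ is a homeomorphism.} Since $\incl_0$ is a continuous injection (it is a leg of a push-out, and injectivity follows because $\F(X)$ is glued to $X$ only along the image of $e_0$, with $L(X)\hookrightarrow X$ injective), it remains to show $\incl_0$ is an open map onto its image in the $\mathcal{G}_0$-topology. Given an open subset $V$ of $\F(X)$, I would exhibit a set $W\in\mathcal{G}_0$ with $\incl_0^{-1}(W)=V$, so that $\incl_0(V)=W\cap\incl_0(\F(X))$ is open in the subspace. The natural candidate is $W=V'\cup\bigcup_{a\in V}\eta_0^{-1}(G_a)$ pushed into $X\cup_{L(X)}\F(X)$ appropriately: concretely, for each $a\in V$ the inverse limit topology gives $E_a\in\varepsilon(X)$ and open $G_a\subset\pi_0(E_a)$ with $a\in\eta_{0,E_a}^{-1}(G_a)\subset V$; then set $W=\incl_0(V)\cup p_0\bigl(\bigcup_{a\in V}q_{0,E_a}^{-1}(G_a)\bigr)$. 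One verifies $\incl_0^{-1}(W)=V$ (the $p_0$ part contributes nothing to $\incl_0^{-1}$ since the two legs agree only on $e_0(L(X))$ and a limit point $p$ with $e_0(p)=a\in V$ is already accounted for), that $p_0^{-1}(W)=\bigcup_a q_{0,E_a}^{-1}(G_a)$ is open in $X$ (each $q_{0,E_a}^{-1}(G_a)$ is open in $E_a\in\varepsilon(X)$, hence open in $X$), and that (ii) holds by construction with the $E_a,G_a$ as witnesses. Hence $W\in\mathcal{G}_0$ and $\incl_0$ is open onto its image, completing the proof.

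\medskip

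I expect Step 1's finite-intersection argument — specifically showing the constructed $G\subset\pi_0(E)$ is open and that the inclusions of preimages behave correctly under the restriction maps $\pi_0(E)\to\pi_0(E_i)$ and $q_0$ — to be the main obstacle, since it requires careful use of the quotient topology on $\pi_0$ and the compatibility of all the canonical maps in the inverse system. Everything else is formal diagram-chasing with push-outs.
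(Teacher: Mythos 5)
Your proposal is correct and follows essentially the same route as the paper: the union and finite-intersection checks are identical (in particular your clean choice $G=(\eta_{E_1}^{E})^{-1}(G_1)\cap(\eta_{E_2}^{E})^{-1}(G_2)$ with $E=E_1\cap E_2$ is exactly the paper's), and your Step 2 is a spelled-out version of the paper's one-line observation that $\incl_0(\eta_0^{-1}(G))=\bigl(p_0(q_0^{-1}(G))\cup\incl_0(\eta_0^{-1}(G))\bigr)\cap\incl_0(\F(X))$ for basic opens $G$ of the inverse limit topology. No substantive differences.
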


\begin{proof} Consider a family  subsets $W_i \in \mathcal{G}_0$~.
Since $p_0^{-1}(\cup_{i}W_i)=\cup_{i}p_0^{-1}(W_i)$, one has that $p_0^{-1}(\cup_{i}W_i)$ is open in $X$
and in a similar way  $
\incl_0^{-1}(\cup_{i}W_i)$ in open in $\F(X)$.

Suppose that
$a \in \incl_0^{-1}(W_{i_0})$, then  there is $E\in \varepsilon(X)$ and $G$ open in $\pi_0(E)$ such that
$a\in \eta_0^{-1}(G)\subset \incl_0^{-1}(W_{i_0}) \subset \incl_0^{-1}(\cup_{i}W_i)$,
$q_0^{-1}(G) \subset p_0^{-1}(W_{i_0})\subset p_0^{-1}(\cup_{i}W_i)$.
 Therefore $\cup_{i}W_i$ is in $\mathcal{G}_0$.

Now suppose that $W_1, W_2$ are $\mathcal{G}_0$~.
Since $p_0^{-1}(W_1\cap W_2)=p_0^{-1}(W_1) \cap p_0^{-1}(W_1)  $, one has that  $p_0^{-1}(W_1\cap W_2)$ is open in $X$~and similarly $\incl_0^{-1}(W_1\cap W_2)$ is open  in $\F(X)$.
Suppose that $a\in \incl_0^{-1}(W_1\cap W_2)$. Then
there are $E_1, E_2\in \varepsilon(X)$ and $G_1$ open in $\pi_0(E_1)$, $G_2$ open in $\pi_0(E_2)$ such that
$a\in \eta_0^{-1}(G_1)\subset \incl_0^{-1}(W_1)$,
$q_0^{-1}(G_1) \subset p_0^{-1}(W_1)$, $a\in \eta_0^{-1}(G_2)\subset \incl_0^{-1}(W_2)$,
$q_0^{-1}(G_2) \subset p_0^{-1}(W_2)$. If
$E= E_1\cap E_2 \in \varepsilon(X)$, we can consider the continuous maps $\eta_{E_1}^E\colon \pi_0(E)\to \pi_0(E_1)$,
$\eta_{E_2}^E\colon \pi_0(E)\to \pi_0(E_2)$ and  $G=(\eta_{E_1}^E)^{-1}(G_1)\cap (\eta_{E_2}^E)^{-1}(G_2)$~. This $G$ satisfies
$a\in\eta_0^{-1}(G)\subset \incl_0^{-1}(W_1\cap W_2)$,
$q_0^{-1}(G) \subset p_0^{-1}(W_1\cap W_2)$.
Therefore $W_1 \cap W_2$ is in $\mathcal{G}_0$.

We also note that if $G$ open in $\pi_0(E)$, then
$\incl_0(\eta_0^{-1}(G))= (p_0(q_0^{-1}(G))\cup \incl_0(\eta_0^{-1}(G))) \cap \incl_0(\F(X))$.
This implies that  $ \incl_0\colon \F(X) \to \incl_0(\F(X))$ is a homeomorphism.

\end{proof}

We note that the topology $\mathcal{G}_0$ is  coarser than the push-out topology. For instance, one has:

\begin{example} For  $X=(\R, \r)$ ,  $X\cup_{L(X)}\F(X)$ with the topology $\mathcal{G}_0$  is
homeomorphic to $(0, 1]$ with the usual topology.
\end{example}

Compare this example with the example \ref{sum}. With the new coarser topology $\mathcal{G}_0$,
a neighborhood at 1, which corresponds to $\infty$, always contains a representative path-component
of the end point.

\medskip

If $V$ is a $q_0$-saturated open subset in $E$ , $E\in \varepsilon(X)$, denote
$$W_0(V)=p_0(V)\cup\incl_0(\eta_0^{-1}(q_0(V))).$$ It is easy to check that
$p_0^{-1}(W_0(V))= V$ and $\incl_0^{-1}(W_0(V))= \eta_0^{-1}(q_0(V))$, then by construction $W_0(V) \in \Co(X)$.
 As a consequence of this fact, one has:

 \begin{lemma}  If $V$ is a $q_0$-saturated  open subset in $E$, $E\in \varepsilon(X)$, then
$W_0(V)$  is in $\mathcal{G}_0$. In particular, for $V=E$ one has that $W_0(E)=p_0(E)\cup \incl_0(\F(X))$ is  in $\mathcal{G}_0$ and $p_0^{-1}(W_0(E))=E$, $\incl_0^{-1}(W_0(E))= \F(X)$.

\end{lemma}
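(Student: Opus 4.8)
The plan is to verify directly that the set $W_0(V)=p_0(V)\cup\incl_0(\eta_0^{-1}(q_0(V)))$ satisfies the two conditions (i) and (ii) defining membership in $\mathcal{G}_0$, using that $V$ is $q_0$-saturated in some $E\in\varepsilon(X)$. First I would compute the two preimages. Since the push-out $X\cup_{L(X)}\F(X)$ is obtained by gluing $X$ and $\F(X)$ along $e_0\colon L(X)\to\F(X)$, the maps $p_0$ and $\incl_0$ are jointly surjective and their images overlap exactly in the image of $L(X)$. So $p_0^{-1}(W_0(V))$ consists of $V$ together with any point of $X$ that $p_0$ sends into $\incl_0(\eta_0^{-1}(q_0(V)))$; but $p_0(x)\in\incl_0(\F(X))$ forces $x\in L(X)$, and for $x\in L(X)$ one has $p_0(x)=\incl_0(e_0(x))$, so $p_0(x)\in\incl_0(\eta_0^{-1}(q_0(V)))$ iff $e_0(x)\in\eta_0^{-1}(q_0(V))$ iff $\eta_0(e_0(x))\in q_0(V)$. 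Now $\eta_0(e_0(x))$ is precisely the path-component $q_0$ assigns to $x\in E\subset X$ when we regard $x$ as a point of $E$ (this is how $e_0$ and $\eta_0$ are defined on the limit), so this says $x$ lies in a path-component of $E$ meeting $V$; since $V$ is $q_0$-saturated this is equivalent to $x\in V$. Hence $p_0^{-1}(W_0(V))=V$, which is open in $X$ because $V$ is open in $E$ and $E$ is open in $X$. A parallel and easier computation gives $\incl_0^{-1}(W_0(V))=\eta_0^{-1}(q_0(V))$, which is open in $\F(X)$ because $q_0(V)$ is open in $\pi_0(E)$ (as $V$ is a $q_0$-saturated open set, its image is open in the quotient topology) and $\eta_0\colon\F(X)\to\pi_0(E)$ is continuous.

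Next I would check condition (ii). Take $a\in\incl_0^{-1}(W_0(V))=\eta_0^{-1}(q_0(V))$ and set $G=q_0(V)\subset\pi_0(E)$, which is open as noted. Then $a\in\eta_0^{-1}(G)\subset\incl_0^{-1}(W_0(V))$ trivially, and $q_0^{-1}(G)=q_0^{-1}(q_0(V))=V=p_0^{-1}(W_0(V))$ again by $q_0$-saturation, so $q_0^{-1}(G)\subset p_0^{-1}(W_0(V))$ holds (with equality). This exhibits the required $E$ and $G$, so $W_0(V)\in\mathcal{G}_0$. The claimed identities $p_0^{-1}(W_0(V))=V$ and $\incl_0^{-1}(W_0(V))=\eta_0^{-1}(q_0(V))$ fall out of the computation above, so there is nothing extra to prove there.

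For the special case $V=E$: the whole of $E$ is $q_0$-saturated and open in $E$, so $W_0(E)\in\mathcal{G}_0$ by what precedes. Here $q_0(E)=\pi_0(E)$, hence $\eta_0^{-1}(q_0(E))=\eta_0^{-1}(\pi_0(E))=\F(X)$, giving $W_0(E)=p_0(E)\cup\incl_0(\F(X))$ and $\incl_0^{-1}(W_0(E))=\F(X)$, while $p_0^{-1}(W_0(E))=E$ by the general formula.

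The only genuine subtlety—and the step I would be most careful about—is the identification of $\eta_0(e_0(x))$ with the path-component of $x$ in $E$, i.e.\ that the composite $L(X)\xrightarrow{e_0}\F(X)\xrightarrow{\eta_{0,E}}\pi_0(E)$ coincides with $q_0\colon E\to\pi_0(E)$ restricted to $L(X)\subset E$. This is immediate from the construction of $e_0$ and the $\eta_0$ as the canonical maps of the inverse limits $L(X)=\lim\varepsilon(X)$ and $\F(X)=\lim\pi_0\varepsilon(X)$, together with naturality of $q_0$, but it is exactly the point that makes $p_0^{-1}(W_0(V))=V$ work; everything else is routine bookkeeping with the push-out and quotient topologies.
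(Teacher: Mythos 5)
Your proof is correct and follows the same route as the paper: the paper simply asserts (in the sentence preceding the lemma) that $p_0^{-1}(W_0(V))=V$ and $\incl_0^{-1}(W_0(V))=\eta_0^{-1}(q_0(V))$ and that membership in $\mathcal{G}_0$ then holds ``by construction,'' which is exactly the computation you carry out in detail, with $G=q_0(V)$ as the witness for condition (ii). Your identification of $\eta_{0,E}\circ e_0$ with $q_0|_{L(X)}$, and your careful treatment of the identifications in the push-out, are the right way to make the paper's ``easy to check'' rigorous.
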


We consider the family $\{W_0(E)|E\in \varepsilon(X)\}$. Then if
$E_1,E_2\in \varepsilon(X)$, one can check that $W_0(E_1)\cap W_0(E_2)=W_0(E_1\cap E_2)$.
Now if $U$ is  in $\mathcal{G}_0$ and $U\supset W_0(E)$, then $p_0^{-1}(U)\supset p_0^{-1}(W_0(E))=E$.
This implies that $p_0^{-1}(U)\in \varepsilon (X)$ and
we have that $W_0(p_0^{-1}(U))=U$.
As a consequence:

\begin{lemma} The family $\{W_0(E)|E\in \varepsilon(X)\}$  is an externology in the topological space $(X\cup_{L(X)}\F(X), \mathcal{G}_0)$.
\end{lemma}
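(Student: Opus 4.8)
The plan is to verify the two externology axioms for the family $\mathcal{B}=\{W_0(E)\mid E\in \varepsilon(X)\}$, viewed inside the topological space $(X\cup_{L(X)}\F(X), \mathcal{G}_0)$: namely that it is nonempty, closed under finite intersections, and upward closed with respect to $\mathcal{G}_0$-open supersets. Nonemptiness is immediate since $\varepsilon(X)\ne\emptyset$, so some $W_0(E)$ exists; and by the preceding lemma each $W_0(E)$ is indeed $\mathcal{G}_0$-open, so $\mathcal{B}$ consists of legitimate exterior open subsets.

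First I would record the intersection formula $W_0(E_1)\cap W_0(E_2)=W_0(E_1\cap E_2)$, which is already stated just above; since $E_1\cap E_2\in\varepsilon(X)$ by the externology axioms on $X$, this shows $\mathcal{B}$ is closed under finite intersections. The one slightly delicate point is to justify this set identity carefully: using $p_0^{-1}(W_0(E))=E$ and $\incl_0^{-1}(W_0(E))=\eta_0^{-1}(\F(X))=\F(X)$ (or more generally $\incl_0^{-1}(W_0(V))=\eta_0^{-1}(q_0(V))$), one checks equality by pulling back along both $p_0$ and $\incl_0$ and using that a subset of the push-out is determined by its two preimages together with the fact that $p_0(X)\cup\incl_0(\F(X))$ covers $X\cup_{L(X)}\F(X)$.

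Next I would handle upward closure, which is the heart of the statement. Suppose $U\in\mathcal{G}_0$ and $W_0(E)\subset U$ for some $E\in\varepsilon(X)$. Then $p_0^{-1}(U)\supset p_0^{-1}(W_0(E))=E$, and since $p_0^{-1}(U)$ is open in $X$ (first clause of membership in $\mathcal{G}_0$) and contains the exterior open set $E$, the externology axiom on $X$ gives $p_0^{-1}(U)\in\varepsilon(X)$. It then remains to see that $U$ is actually of the form $W_0(\,\cdot\,)$ for this set, i.e. $W_0(p_0^{-1}(U))=U$. For the inclusion $W_0(p_0^{-1}(U))\subset U$: the $X$-part is clear since $p_0(p_0^{-1}(U))\subset U$, and the end-part follows because $\incl_0^{-1}(U)$ is open in $\F(X)$ and, using clause (ii) of the definition of $\mathcal{G}_0$ together with $\incl_0^{-1}(W_0(E))=\F(X)\subset\incl_0^{-1}(U)$... actually the cleaner route is to note $\incl_0(\eta_0^{-1}(q_0(p_0^{-1}(U))))\subset U$ because each end point $a\in\F(X)$ whose associated path-component meets $p_0^{-1}(U)$ lies in $\incl_0^{-1}(U)$ by clause (ii); I would spell this out via the commuting square relating $q_0$, $\eta_0$, $p_0$, $\incl_0$. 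For the reverse inclusion $U\subset W_0(p_0^{-1}(U))$: take $w\in U$; if $w=p_0(x)$ then $x\in p_0^{-1}(U)$ so $w\in p_0(p_0^{-1}(U))$; if $w=\incl_0(a)$ then $a\in\incl_0^{-1}(U)$, and applying clause (ii) of $\mathcal{G}_0$ to $a$ yields $E'\in\varepsilon(X)$ and $G$ open in $\pi_0(E')$ with $a\in\eta_0^{-1}(G)\subset\incl_0^{-1}(U)$ and $q_0^{-1}(G)\subset p_0^{-1}(U)$; then the path-component(s) of $E'$ indexed by $\eta_0(a)$ lie in $p_0^{-1}(U)$, and since $E\cap E'\in\varepsilon(X)$ one can push down along $\eta_{E'}^{E\cap E'}$ to conclude $a\in\eta_0^{-1}(q_0(p_0^{-1}(U)))$, hence $w\in W_0(p_0^{-1}(U))$.

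The main obstacle I anticipate is precisely this last bookkeeping with clause (ii) of $\mathcal{G}_0$ and the maps $q_0,\eta_0,\eta^{E'}_{E},p_0,\incl_0$: one must be careful that "the path-component of $a$ in $E$" is well defined (it is, via $\eta_{0,E}(a)$) and that saturation under $q_0$ is respected when passing to an intersection $E\cap E'$. Everything else — nonemptiness, and the intersection formula — is formal and short. I would close by remarking that this externology, which we denote $\varepsilon(\Co(X))$, makes $(X\cup_{L(X)}\F(X),\mathcal{G}_0)$ into the exterior space $\Co(X)$, with base $\{W_0(E)\mid E\in\varepsilon(X)\}$.
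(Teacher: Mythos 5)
Your proposal follows the paper's own route exactly: the paper establishes the lemma via the two remarks immediately preceding it, namely the intersection formula $W_0(E_1)\cap W_0(E_2)=W_0(E_1\cap E_2)$ and the upward-closure argument ($U\supset W_0(E)$ gives $p_0^{-1}(U)\supset E$, hence $p_0^{-1}(U)\in\varepsilon(X)$ and $W_0(p_0^{-1}(U))=U$), so the content and structure match. The one place where you work much harder than necessary is the identity $W_0(p_0^{-1}(U))=U$. Since $V=p_0^{-1}(U)$ is here an entire exterior open subset, $q_0(V)=\pi_0(V)$ and therefore $\eta_0^{-1}(q_0(V))=\F(X)$; thus $W_0(p_0^{-1}(U))=p_0(p_0^{-1}(U))\cup\incl_0(\F(X))=(U\cap p_0(X))\cup\incl_0(\F(X))$, and this equals $U$ because $\incl_0(\F(X))\subset W_0(E)\subset U$ and the push-out is covered by $p_0(X)\cup\incl_0(\F(X))$. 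In particular, your appeal to clause (ii) of $\mathcal{G}_0$ to get $\incl_0(\eta_0^{-1}(q_0(p_0^{-1}(U))))\subset U$ (``each end point whose path-component meets $p_0^{-1}(U)$ lies in $\incl_0^{-1}(U)$'') is not something clause (ii) actually provides --- it only constrains points already known to lie in $\incl_0^{-1}(U)$ --- and likewise the push-down along $\eta_{E'}^{E\cap E'}$ in the reverse inclusion is superfluous. None of this breaks the proof, since the inclusions you need are the trivial ones above, but the clause-(ii) bookkeeping should be replaced by the one-line observation that the end part of $W_0(p_0^{-1}(U))$ is all of $\incl_0(\F(X))$.
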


\begin{definition} The push-out $X\cup_{L(X)}\F(X)$ with the topology $\mathcal{G}_{0}$ and
the externogy $\{W_0(E)|E\in \varepsilon(X)\}$ has the structure of an exterior space that will be called the \emph{$\Co$-completion} of $X$ and it will be denoted by $\Co(X)$.
\end{definition}

In the following examples, we analyze the completion functor for trivial and total externologies.

\begin{example} Suppose that $(X,\{X\})$ is a trivial exterior space. Then $L(X)=X$,
$\F(X)=\pi_0(X)$. Therefore,
$\Co(X)=\pi_0(X)$ (notice that we have the quotient topology and the trivial externology).
\end{example}

\begin{example} Suppose that $X$  is a total exterior space $\varepsilon(X)=\t_X$. Then $L(X)=\emptyset $,
$\F(X)=\emptyset$. Therefore,
$\Co(X)=X$.
\end{example}

From the properties of the push-out construction one can easily check:

\begin{lemma} If $X$ is an exterior space, then $L(\Co(X))\cong \incl_0(\F(X))$.
\end{lemma}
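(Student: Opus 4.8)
The plan is to unwind the definitions of the push-out topology, the refined topology $\mathcal{G}_0$, and the externology $\{W_0(E)\mid E\in\varepsilon(X)\}$ on $\Co(X)$, and then to check directly that $\incl_0\colon \F(X)\to X\cup_{L(X)}\F(X)$ maps $\F(X)$ homeomorphically onto the subspace $\incl_0(\F(X))$, with the relative externology of $\Co(X)$ becoming the trivial externology there. Since we already know from the proposition above that $\incl_0\colon \F(X)\to \incl_0(\F(X))$ is a homeomorphism of underlying topological spaces (with $\incl_0(\F(X))$ carrying the subspace topology from $\mathcal{G}_0$), the only work remaining is to identify the set-level claim $L(\Co(X))=\incl_0(\F(X))$ together with the compatibility of exterior structures.

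First I would compute $L(\Co(X))=\bigcap_{E\in\varepsilon(X)}W_0(E)$ using the family $\{W_0(E)\mid E\in\varepsilon(X)\}$ as the externology of $\Co(X)$, recalling from the lemma just above that $p_0^{-1}(W_0(E))=E$ and $\incl_0^{-1}(W_0(E))=\F(X)$, and that $W_0(E)=p_0(E)\cup\incl_0(\F(X))$. Hence
$$\bigcap_{E\in\varepsilon(X)}W_0(E)=\Big(\bigcap_{E}p_0(E)\Big)\cup\incl_0(\F(X)).$$
Now $p_0$ restricted to $X\setminus L(X)$ is injective and $p_0(E)\setminus\incl_0(\F(X))$ corresponds to $E\setminus L(X)$; taking the intersection over all $E\in\varepsilon(X)$ of these sets gives $\big(\bigcap_E E\big)\setminus L(X)=L(X)\setminus L(X)=\emptyset$. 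Therefore $\bigcap_E p_0(E)\subseteq\incl_0(\F(X))$, and since each $W_0(E)\supseteq\incl_0(\F(X))$ trivially, we conclude $L(\Co(X))=\incl_0(\F(X))$ as subsets of $X\cup_{L(X)}\F(X)$. (One should be slightly careful about the image of $L(X)$ itself under $p_0$, since $L(X)$ is glued to $\F(X)$ via $e_0$; but $p_0(L(X))\subseteq\incl_0(\F(X))$ by commutativity of the push-out square, so this point does not change the conclusion.)

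Next I would check the exterior-space refinement. The relative externology that $L(\Co(X))$ inherits from $\Co(X)$ consists of the sets $W_0(E)\cap L(\Co(X))$ for $E\in\varepsilon(X)$; by the computation above each such set equals $\incl_0(\F(X))=L(\Co(X))$ itself, so the relative externology on $L(\Co(X))$ is trivial — consistent with the general remark that the relative externology on a limit space is trivial. Meanwhile the homeomorphism $\incl_0\colon\F(X)\to\incl_0(\F(X))$ from the earlier proposition transports the (necessarily trivial, being a plain topological space regarded as exterior space with externology $\{\F(X)\}$, or rather: $\F(X)$ here is just a topological space) structure appropriately; so $L(\Co(X))\cong\incl_0(\F(X))\cong\F(X)$ as exterior/topological spaces. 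Assembling these identifications gives $L(\Co(X))\cong\incl_0(\F(X))$, which is the assertion.

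The main obstacle I anticipate is purely bookkeeping: keeping straight the three nested structures (push-out topology, the coarser $\mathcal{G}_0$, and the externology) and verifying that the subspace topology on $\incl_0(\F(X))$ induced by $\mathcal{G}_0$ really coincides with the topology of $\F(X)$ — but this is exactly the content of the last paragraph of the proof of the proposition preceding the statement (the identity $\incl_0(\eta_0^{-1}(G))=(p_0(q_0^{-1}(G))\cup\incl_0(\eta_0^{-1}(G)))\cap\incl_0(\F(X))$), so it may simply be cited. No genuinely hard step is expected; the result is essentially a formal consequence of the push-out construction together with the two lemmas on $W_0(V)$ and $W_0(E)$.
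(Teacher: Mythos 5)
Your proof is correct and is exactly the computation the paper has in mind: the paper states the lemma with only the remark that it follows ``from the properties of the push-out construction,'' and your verification that $\bigcap_{E}W_0(E)=\bigl(\bigcap_{E}p_0(E)\bigr)\cup\incl_0(\F(X))=\incl_0(\F(X))$, using $p_0^{-1}(W_0(E))=E$, the injectivity of $p_0$ on $X\setminus L(X)$, and the earlier proposition that $\incl_0\colon\F(X)\to\incl_0(\F(X))$ is a homeomorphism, is precisely the intended easy check. No gaps.
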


\begin{proposition} \label{functor}The construction $\Co\colon {\bf E} \to {\bf E}$ is a functor and
there is a canonical transformation $p_0 \colon \id_{\bf E}\to \Co$.
\end{proposition}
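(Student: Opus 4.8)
The plan is to verify the two assertions of Proposition \ref{functor} separately: first that $\Co$ is well-defined on morphisms and respects composition and identities, and second that the maps $p_0$ assemble into a natural transformation. The functoriality on objects is already in place, since the previous definitions attach to each exterior space $X$ the exterior space $\Co(X)=(X\cup_{L(X)}\F(X),\mathcal{G}_0,\{W_0(E)\mid E\in\varepsilon(X)\})$.

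For the action on morphisms, let $f\colon X\to Y$ be an exterior map. By the discussion preceding Lemma \ref{lemahomotopia}, $f$ induces continuous maps $L(f)\colon L(X)\to L(Y)$ and $\F(f)\colon\F(X)\to\F(Y)$, and these are compatible with $e_0$, i.e. the square with $e_0\colon L(X)\to\F(X)$, $e_0\colon L(Y)\to\F(Y)$, $L(f)$ and $\F(f)$ commutes (this is checked directly from the definitions of $e_0$, $L(f)$, $\F(f)$ on representatives $a=(C_E)_E$). The inclusion squares $L(X)\to X$ and $L(Y)\to Y$ are also compatible with $f$. Hence by the universal property of the push-out defining $X\cup_{L(X)}\F(X)$, there is a unique continuous map $\Co(f)\colon X\cup_{L(X)}\F(X)\to Y\cup_{L(Y)}\F(Y)$ with $\Co(f)\circ p_0^X=p_0^Y\circ f$ and $\Co(f)\circ\inc_0^X=\inc_0^Y\circ\F(f)$. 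What still needs checking is that $\Co(f)$ is \emph{exterior} for the topologies $\mathcal{G}_0$ and the externologies $\{W_0(E)\}$; I would do this by a direct preimage computation. For externality of the externology it suffices to note $\Co(f)^{-1}(W_0^Y(E'))$ has preimage $f^{-1}(E')\in\varepsilon(X)$ under $p_0^X$ and preimage $\F(f)^{-1}(\eta_0^{-1}(q_0(E')))$ under $\inc_0^X$; since $f^{-1}(E')\in\varepsilon(X)$ one gets $\Co(f)^{-1}(W_0^Y(E'))\supseteq W_0^X(f^{-1}(E'))$ (in fact equals it when one is careful with saturation), which lies in the externology of $\Co(X)$. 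For continuity with respect to $\mathcal{G}_0$ one unwinds condition (ii) in the definition of $\mathcal{G}_0$, using that $f^{-1}$ takes $q_0$-saturated opens in $E'$ to $q_0$-saturated opens in $f^{-1}(E')$ and that $\pi_0(f|)\colon\pi_0(f^{-1}(E'))\to\pi_0(E')$ is continuous, so the witnessing data $(E',G)$ for a point $a\in\inc_0^{-1}(W)$ pull back to witnessing data for $\Co(f)^{-1}(W)$. Functoriality ($\Co(g f)=\Co(g)\Co(f)$, $\Co(\id)=\id$) then follows from the uniqueness clause in the push-out universal property, since both sides satisfy the same defining equations.

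Finally, for the natural transformation $p_0\colon\id_{\bf E}\to\Co$: the component at $X$ is the canonical push-out map $p_0^X\colon X\to X\cup_{L(X)}\F(X)$, which is continuous by construction of the push-out and exterior because $(p_0^X)^{-1}(W_0^X(E))=E\in\varepsilon(X)$ for every $E\in\varepsilon(X)$; since the $W_0^X(E)$ form a base (indeed all) of the externology of $\Co(X)$, this shows $p_0^X$ is exterior. Naturality, i.e. $\Co(f)\circ p_0^X=p_0^Y\circ f$, is exactly one of the two defining equations of $\Co(f)$, so there is nothing further to prove.

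The main obstacle is purely bookkeeping: confirming that $\Co(f)$ is an exterior map for the refined topology $\mathcal{G}_0$ rather than the push-out topology. One must chase the witnessing pairs $(E,G)$ through the preimage and use the compatibility $q_0\circ f| = (\pi_0 f)\circ q_0$, $\eta_0^Y\circ\F(f)=(\pi_0 f)\circ\eta_0^X$; none of this is deep, but it is the only step where the non-standard topology genuinely enters. Everything else is a formal consequence of the universal property of push-outs.
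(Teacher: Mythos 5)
Your proposal is correct and follows essentially the same route as the paper: the substance in both cases is the direct preimage computation that pulls the witnessing pairs $(E,G)$ of condition (ii) of $\mathcal{G}_0$ back through $f$ to $(f^{-1}(E),(\pi_0(f|_{f^{-1}(E)}))^{-1}(G))$, the inclusion $(\Co(f))^{-1}(W_0(E))\supset W_0(f^{-1}(E))$ for exteriorness, and $p_0^{-1}(W_0(E))=E$ for naturality. Your explicit appeal to the push-out universal property for defining $\Co(f)$ and deducing functoriality is a slightly cleaner packaging of what the paper leaves implicit, but it is not a different argument.
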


\begin{proof}

Given an exterior map $f\colon X \to Y$, we have to prove that
 $\Co (f) \colon \Co(X) \to \Co(Y)$ is  exterior.
Suppose that $W$ is open in $\Co(Y)$. Then,
$p_0^{-1}(W)$ is open in $Y$, $\incl_0^{-1}(W)$ is open in $\F(Y)$ and for each
$b \in \incl_0^{-1}(W)$  there is $E\in \varepsilon(Y)$ and $G$ open in $\pi_0(E)$ such that
$b \in  \eta_0^{-1}(G)\subset \incl_0^{-1}(W)$,
$q_0^{-1}(G) \subset p_0^{-1}(W)$.
To see that $(\Co (f))^{-1}(W)$ is open in  $\Co(X)$, we note that
$p_0^{-1}((\Co (f))^{-1}(W))=f^{-1}(p_0^{-1}(W))$ is open in $X$ and
$\incl_0^{-1}((\Co (f))^{-1}(W))=(\F(f))^{-1}(\incl_0^{-1}(W))$ is open in $\F(X)$; moreover, for each
$a \in \incl_0^{-1}((\Co (f))^{-1}(W))$  there is $E\in \varepsilon(Y)$ and $G$ open in $\pi_0(E)$ such that
$\F(f)(a) \in \eta_0^{-1}(G)\subset \incl_0^{-1}(W)$,
$q_0^{-1}(G) \subset p_0^{-1}(W)$. If we take $f^{-1}(E)\in \varepsilon(X)$ and
$(\pi_0(f|_{f^{-1}(E)}))^{-1}(G)$ open in $\pi_0(f^{-1}(E))$, then one has that
$a \in \eta_0^{-1}(\pi_0(f|_{f^{-1}(E)})^{-1}(G))\subset \incl_0^{-1}((\Co (f))^{-1}(W))$ and
$q_0^{-1}((\pi_0(f|_{f^{-1}(E)}))^{-1}(G))\subset p_0^{-1}((\Co (f))^{-1}(W))$. This implies that
$(\Co (f))^{-1}(W)$ is open in  $\Co(X)$. Therefore $\Co (f)$ is a continuous map.

To see that $\Co (f)$ is exterior it suffices to check that $(\Co (f))^{-1}(W_0(E))\supset W_0(f^{-1}(E))$
for each $E\in \varepsilon(X)$.

To see that $p_0$ is a natural transformation, we note that $p_0^{-1}(W_0(E))=E$.

\end{proof}

\begin{definition} An exterior space $X$ is said to be \emph{$\Co$-complete}  if the canonical
map $p_0\colon  X \to \Co(X)$ is an isomorphism in ${\bf E}$~. An exterior space is said to be
\emph{$\Co^2$-complete} if $\Co(X)$ is $\Co$-complete.
\end{definition}

\begin{theorem} The functor $\Co \colon {\bf E}|_{\Co^2-complete}\to   {\bf E}|_{\Co-complete}$ is left adjoint to the inclusion functor  In$\colon {\bf E}|_{\Co-complete}  \to {\bf E}|_{\Co^2-complete}$.
\end{theorem}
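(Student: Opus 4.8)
The plan is to establish the adjunction by producing, for each $\Co^2$-complete exterior space $X$ and each $\Co$-complete exterior space $Y$, a natural bijection
$$
\Hom_{{\bf E}|_{\Co-complete}}(\Co(X), Y) \;\cong\; \Hom_{{\bf E}|_{\Co^2-complete}}(X, \mathrm{In}(Y)).
$$
The unit of the would-be adjunction is the canonical natural transformation $p_0\colon \id_{\bf E}\to \Co$ from Proposition \ref{functor}, restricted to $\Co^2$-complete spaces; the counit is forced: for a $\Co$-complete $Y$, the map $p_0\colon Y\to \Co(Y)$ is an isomorphism, so we set $\varepsilon_Y = (p_0)^{-1}\colon \Co(Y)\to Y$. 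The first thing to check is that $\Co$ really does land in $\Co$-complete spaces when restricted to $\Co^2$-complete inputs — but this is immediate from the definitions: $X$ is $\Co^2$-complete exactly when $\Co(X)$ is $\Co$-complete, so the functor $\Co\colon {\bf E}|_{\Co^2-complete}\to {\bf E}|_{\Co-complete}$ is well-defined on objects, and on morphisms it is the restriction of the functor from Proposition \ref{functor}.

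Next I would verify the two triangle identities. For $Y$ a $\Co$-complete space we need $\varepsilon_Y\circ \Co(p_{0,Y}) = \id_{\Co(Y)}$ and $\varepsilon_{\mathrm{In}(Y)}\circ p_{0, \mathrm{In}(Y)}$-type composites to be identities; since $p_{0,Y}\colon Y\to \Co(Y)$ is an isomorphism and $p_0$ is natural, $\Co(p_{0,Y})$ and $p_{0,\Co(Y)}$ agree as maps $\Co(Y)\to \Co\Co(Y)$, and both triangle identities reduce to the statement that $p_{0,Y}$ and its various images under $\Co$ are mutually inverse — a diagram chase using naturality of $p_0$ and the fact that $\Co$ is a functor. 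Then, given an exterior map $g\colon \Co(X)\to Y$ with $Y$ $\Co$-complete, the corresponding map $X\to Y$ is $g\circ p_{0,X}$; conversely, given $f\colon X\to Y$, the corresponding map $\Co(X)\to Y$ is $\varepsilon_Y\circ \Co(f) = (p_{0,Y})^{-1}\circ \Co(f)$. Naturality of both constructions in $X$ and $Y$ follows from naturality of $p_0$, and that the two assignments are mutually inverse is again a short chase using the triangle identities, or directly: starting from $f$, we get $(p_{0,Y})^{-1}\circ \Co(f)\circ p_{0,X} = (p_{0,Y})^{-1}\circ p_{0,Y}\circ f = f$ by naturality of $p_0$ applied to $f$, and starting from $g$ one uses that $\Co(X)$ is $\Co$-complete.

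The only genuine content — and hence the main point to be careful about — is the claim that the unit $p_{0,X}\colon X\to \Co(X)$, viewed for $X$ ranging over $\Co^2$-complete spaces, is universal among maps from $X$ to $\Co$-complete spaces, equivalently that for a $\Co$-complete $Y$ every exterior map $f\colon X\to Y$ factors uniquely through $p_{0,X}$. Uniqueness is clear because $p_{0,X}$ need not be epic in general, so the factorization is pinned down by the explicit formula $(p_{0,Y})^{-1}\circ \Co(f)$; the real work is existence together with the identity $(p_{0,Y})^{-1}\circ \Co(f)\circ p_{0,X} = f$, which is exactly the naturality square
$$
\xymatrix{
X \ar[r]^{p_{0,X}} \ar[d]_{f} & \Co(X) \ar[d]^{\Co(f)} \\
Y \ar[r]^{p_{0,Y}} & \Co(Y)
}
$$
from Proposition \ref{functor}. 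So in the end the theorem is a formal consequence of two facts already in hand: $p_0$ is a natural transformation $\id_{\bf E}\to \Co$, and $p_{0,Y}$ is an isomorphism precisely for $\Co$-complete $Y$. I expect no serious obstacle; the care needed is purely bookkeeping — making sure the restricted functors are well-defined on the two subcategories and that the counit's invertibility is used in exactly the right places.
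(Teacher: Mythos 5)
Your proposal is correct and follows essentially the same route as the paper: both establish the bijection $\Hom(\Co(X),Y)\cong\Hom(X,Y)$ via $f\mapsto f\circ p_{0,X}$ and $g\mapsto (p_{0,Y})^{-1}\circ\Co(g)$, using that $p_{0,Y}$ is invertible for $\Co$-complete $Y$ and that naturality of $p_0$ gives $(p_{0,Y})^{-1}\circ\Co(g)\circ p_{0,X}=g$. Your write-up is in fact somewhat more explicit than the paper's (which checks only one of the two composites), though the parenthetical claim that uniqueness follows because $p_{0,X}$ ``need not be epic'' is misstated; uniqueness comes from the two assignments being mutually inverse, as you verify.
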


\begin{proof} Firstly, we observe that if $X\in {\bf E}$ is $\Co^2$-complete, one has that $\Co(X)$ is $\Co$-complete.
Now take $X$ $\Co^2$-complete and
$Y$ $\Co$-complete. If $f\colon \Co(X) \to Y$ is a map in ${\bf E}$, then we have an
induced map $fp_0^X\colon X \to Y$. And if $g \colon X \to Y$ is a map in ${\bf E}$, then
$(p_0^Y)^{-1}\Co(g)\colon\Co(X) \to \Co(Y) \cong Y$ is a map such that $(p_0^Y)^{-1}\Co(g)p_0 ^X=g$. The adjuntion above is a consequence of this bijective correspondence.
\end{proof}

\begin{theorem}\label{bijectioncero}  An exterior space $X$ is  $\Co$-complete if and only if the canonical map
$L(X)\to \F(X)$ is  bijective and satisfies the following condition: If $x\in L(X)$, $x \in U$,
$U$ open in $X$, there are $E\in \varepsilon (X)$ and $W$ $q_0$-saturated open subset  in $E$ such that
$x\in W \subset U$~.
\end{theorem}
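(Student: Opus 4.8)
The plan is to prove both implications directly from the definition of $\Co$-completeness, namely that $p_0 \colon X \to \Co(X)$ is an isomorphism in $\mathbf{E}$, i.e. a homeomorphism which is also an exterior map with exterior inverse. Recall that $\Co(X)$ is the push-out $X\cup_{L(X)}\F(X)$ with the topology $\mathcal{G}_0$ and externology $\{W_0(E)\mid E\in\varepsilon(X)\}$. The map $p_0$ is always injective on $X\setminus L(X)$ and its behaviour on $L(X)$ is governed by $e_0$; since $\Co(X)$ is obtained by gluing $\F(X)$ to $X$ along $e_0$, the map $p_0$ identifies two points $x,x'\in L(X)$ exactly when $e_0(x)=e_0(x')$, and $p_0$ fails to be surjective exactly when $e_0$ is not surjective, because the complement of the image consists of the non-$e_0$-representable ends. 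So $p_0$ is a bijection if and only if $e_0\colon L(X)\to\F(X)$ is a bijection. This handles the set-theoretic part.

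For the ($\Rightarrow$) direction, assume $p_0$ is an isomorphism. Then in particular it is a bijection, so by the discussion above $e_0$ is bijective. For the saturation condition: take $x\in L(X)$ and $U$ open in $X$ with $x\in U$. Then $p_0(x)\in L(\Co(X))\cong\incl_0(\F(X))$, and $p_0$ being an open map (as a homeomorphism) sends $U$ to an open set of $\Co(X)$ containing $p_0(x)$; but actually it is cleaner to argue using the definition of $\mathcal{G}_0$ directly on a suitable open set. Since $p_0$ is a homeomorphism, $p_0(U)$ need not be in $\mathcal{G}_0$ unless $U$ is already of the right form — the point is rather that $p_0^{-1}$ is continuous, so every $\mathcal{G}_0$-open set pulls back to an open set of $X$, and conversely since $p_0$ is open every open set of $X$ is the preimage of a $\mathcal{G}_0$-open set $W$. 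Then apply condition (ii) in the definition of $\mathcal{G}_0$ to $W$ at the end $a=e_0(x)\in\incl_0^{-1}(W)$: this yields $E\in\varepsilon(X)$ and $G$ open in $\pi_0(E)$ with $a\in\eta_0^{-1}(G)\subset\incl_0^{-1}(W)$ and $q_0^{-1}(G)\subset p_0^{-1}(W)$. Setting $V=q_0^{-1}(G)$, a $q_0$-saturated open subset of $E$, and using $x\in V$ (since $e_0(x)=a\in G$ means the path-component of $x$ in $E$ lies in $G$), one gets $x\in V\subset p_0^{-1}(W)\cap X$. Shrinking $W$ to $W\cap p_0(U)$ (still in $\mathcal{G}_0$ since $p_0(U)$ is $\mathcal{G}_0$-open) one arranges $V\subset U$, giving the required $q_0$-saturated open subset.

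For the ($\Leftarrow$) direction, assume $e_0$ is bijective and the saturation condition holds. Bijectivity of $e_0$ gives bijectivity of $p_0$ as above. Continuity of $p_0$ is automatic from the push-out (it is one of the structure maps) and the fact that $\mathcal{G}_0$ is coarser than the push-out topology. That $p_0$ is an exterior map is exactly the identity $p_0^{-1}(W_0(E))=E$ noted in Proposition~\ref{functor}, and it is moreover an isomorphism of externologies since every member of $\{W_0(E)\}$ arises this way. It remains to show $p_0$ is an open map, equivalently that $p_0^{-1}$ is continuous. This is where Proposition~\ref{bijective} (the second one, on $e_0$ being a homeomorphism) enters: given $U$ open in $X$, we must show $p_0(U)\in\mathcal{G}_0$. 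Now $p_0^{-1}(p_0(U))=U$ is open in $X$ (here we use that $p_0$ is injective on all of $X$, which follows from $e_0$ injective), and $\incl_0^{-1}(p_0(U))=e_0(U\cap L(X))$ which is open in $\F(X)$ by the argument in the proof of Proposition~\ref{bijective} together with the saturation hypothesis; finally condition (ii) of $\mathcal{G}_0$ for $p_0(U)$ at each $a\in\incl_0^{-1}(p_0(U))$ is supplied directly by the saturation hypothesis applied to the corresponding $x\in L(X)$ with $e_0(x)=a$. Thus $p_0(U)\in\mathcal{G}_0$ and $p_0$ is a homeomorphism, hence an isomorphism in $\mathbf{E}$.

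The main obstacle I expect is the careful bookkeeping in the open-map / homeomorphism argument: one has to juggle three topologies (the topology of $X$, the push-out topology, and the coarser $\mathcal{G}_0$) and keep track of how $q_0$-saturated open sets $V\subset E$ correspond to $\mathcal{G}_0$-open sets $W_0(V)$ and to ends via $\eta_0^{-1}(q_0(V))$, all while verifying that the saturation hypothesis supplies precisely the data demanded by clause (ii) of the definition of $\mathcal{G}_0$. The bijection statement and the exterior-map statement are routine; the topological equivalence is the crux, and it is essentially a repackaging of Propositions~\ref{bijective} specialized to the situation where $e_0$ is already known to be a bijection.
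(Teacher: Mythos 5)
Your proof is correct and follows essentially the same route as the paper: the forward direction is read off from clause (ii) of the definition of $\mathcal{G}_0$ applied to $W=p_0(U)$, and the converse verifies directly that the saturation hypothesis makes $p_0(U)$ a $\mathcal{G}_0$-open set (the paper only details this second direction, dismissing the other as ``plain to check''). The only blemish is the momentary assertion that $p_0(U)$ ``need not be in $\mathcal{G}_0$'' even when $p_0$ is a homeomorphism --- it is, and you immediately use exactly that --- so the mathematics goes through unchanged.
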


\begin{proof} We are going to prove that $X$ is  $\Co$-complete by using Proposition \ref{bijective}. Firsty,    it  is easy to check  that $p_0\colon X \to \Co(X)$ is bijective and exterior.
Now, if $U$ is open in $X$ and $U\cap L(X)=\emptyset$ it is clear that $p_0(U)$ is open
in  $\Co(X)$. If $U\cap L(X) \not = \emptyset $, then if $a\in \incl_0^{-1}(p_0(U))$, there is
a unique $x\in L(X)$ such that $x\in U$ and $p_0(x)=a$. By  hypothesis conditions, there are $E\in \varepsilon (X)$ and $W$ $q_0$-saturated open subset  in $E$ such that
$x\in W \subset U$~. This implies that $q_0(W)$ verifies that
$\eta_0^{-1}(q_0 (W))\subset \incl_{0}^{-1}p_0(U)$
and $q_0^{-1} q_0(W)=W \subset U=p_0^{-1} p_0 (U)$. Therefore  $p_0(U)$ is open
in  $\Co(X)$ and $p_0$ is an exterior homeomorphism.  We also remark that if  $E \in \varepsilon(X)$, then
$p_{0}(E)=W_{0}(E)$ is also exterior. This implies that $p_{0} \colon X \to \Co(X)$ is an isomorphism in {\bf E}. Thus we have obtained that $X$ is $\Co$-complete. The converse is plain to check.
\end{proof}

\begin{definition} An exterior space $X=(X,\varepsilon(X))$ is said to be
\emph{first countable at infinity} if $\varepsilon(X)$ has a countable base
$E_{0} \supset E_{1}\supset E_{ 2} \cdots$~.
\end{definition}

Note that if an exterior space
$X$ is first countable at infinity, then
$\Co(X)$ is first countable at infinity.

\begin{theorem} \label{c2complete}Let $X$ be a locally path-connected exterior space and suppose that $X$ is
first countable at infinity. Then,
$\Co(X)$ is  $\Co$-complete.
\end{theorem}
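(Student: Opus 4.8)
The plan is to verify, for the exterior space $\Co(X)$, the two conditions characterising $\Co$-completeness in Theorem~\ref{bijectioncero}: that $e_0\colon L(\Co(X))\to\F(\Co(X))$ is bijective, and that whenever $x\in L(\Co(X))$ lies in an open subset $U$ of $\Co(X)$ there are $E^{*}\in\varepsilon(\Co(X))$ and a $q_0$-saturated open subset $W^{*}$ of $E^{*}$ with $x\in W^{*}\subset U$. Throughout I use that $\varepsilon(\Co(X))=\{W_0(E)\mid E\in\varepsilon(X)\}$, that $L(\Co(X))=\incl_0(\F(X))$, and that $\incl_0\colon\F(X)\to\incl_0(\F(X))$ is a homeomorphism. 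The first ingredient is a \emph{clopen decomposition} of $W_0(E)$: since $X$, hence every $E\in\varepsilon(X)$, is locally path-connected, $\pi_0(E)$ is discrete, so each $q_0^{-1}(\{c\})$ with $c\in\pi_0(E)$ is a $q_0$-saturated open subset of $E$, whence $W_0(q_0^{-1}(\{c\}))$ is open in $\Co(X)$; from $W_0(V)=p_0(V)\cup\incl_0(\eta_0^{-1}(q_0(V)))$ one checks these sets are pairwise disjoint and cover $W_0(E)$, so $W_0(E)=\bigsqcup_{c\in\pi_0(E)}W_0(q_0^{-1}(\{c\}))$ is a disjoint union of clopen pieces, each in particular a union of path-components of $W_0(E)$.

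This already gives the second condition. Given $x=\incl_0(a)\in U$ with $U$ open in $\Co(X)$, the defining property of $\mathcal{G}_0$ provides $E\in\varepsilon(X)$ and $G$ open in $\pi_0(E)$ with $a\in\eta_0^{-1}(G)\subset\incl_0^{-1}(U)$ and $q_0^{-1}(G)\subset p_0^{-1}(U)$; replacing $G$ by $\{c\}$ with $c=\eta_0(a)$ preserves both inclusions. Then $E^{*}:=W_0(E)\in\varepsilon(\Co(X))$ and $W^{*}:=W_0(q_0^{-1}(\{c\}))$ is open in $\Co(X)$, is clopen in $E^{*}$ and hence $q_0$-saturated in it, contains $x$, and satisfies $W^{*}=p_0(q_0^{-1}(\{c\}))\cup\incl_0(\eta_0^{-1}(\{c\}))\subset U$.

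The heart of the argument, and the step I expect to be the main obstacle, is to show that, under the standing hypotheses, each piece $W_0(q_0^{-1}(\{c\}))$ with $c\in\pi_0(E_n)$ is \emph{path-connected}, where $E_0\supset E_1\supset E_2\supset\cdots$ is a countable base of $\varepsilon(X)$. Its ``finite part'' $p_0(q_0^{-1}(\{c\}))$ is path-connected, being a continuous image of the path-component $q_0^{-1}(\{c\})$, so it suffices to join each $\incl_0(a)$ with $\eta_0(a)=c$ to it by a path inside the piece. Writing $a=(c_m)_m$ and $P_m=q_0^{-1}(\{c_m\})$ for $m\ge n$, one gets non-empty open path-components $P_n=q_0^{-1}(\{c\})\supset P_{n+1}\supset\cdots$; choosing $x_m\in P_m$ and a path $\gamma_m$ from $x_m$ to $x_{m+1}$ inside the path-connected set $P_m$, I let $\gamma$ run through $p_0\circ\gamma_m$ on $[1-2^{-(m-n)},\,1-2^{-(m-n+1)}]$ and take the value $\incl_0(a)$ at $1$. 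Continuity of $\gamma$ on $[0,1)$ is the pasting lemma, and continuity at $1$ follows from the fact that $\{W_0(P_k)\mid k\ge n\}$ is a neighbourhood base of $\incl_0(a)$ in $\Co(X)$ together with the inclusion $\gamma([1-2^{-(k-n)},1])\subset p_0(P_k)\cup\{\incl_0(a)\}\subset W_0(P_k)$. The neighbourhood-base claim is exactly where first countability at infinity enters: any neighbourhood of $\incl_0(a)$ contains some $W_0(q_0^{-1}(G))$ with $G$ open in $\pi_0(E')$, $E'\in\varepsilon(X)$, and choosing $E_k\subset E'$ with $k\ge n$ one gets $P_k\subset q_0^{-1}(G)$ (as path-components), hence $W_0(P_k)\subset W_0(q_0^{-1}(G))$. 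Thus $\incl_0(a)$ lies in the path-component of $p_0(q_0^{-1}(\{c\}))$ and the piece is path-connected.

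Granting this, the clopen decomposition identifies $\pi_0(W_0(E_n))$ canonically with $\pi_0(E_n)$, compatibly with the bonding maps (the piece of $W_0(E_{n+1})$ indexed by $c$ sits inside the piece of $W_0(E_n)$ indexed by the image of $c$, since $q_0^{-1}(\{c\})\subset E_{n+1}$ lies in the corresponding path-component of $E_n$). Hence $\F(\Co(X))=\lim_n\pi_0(W_0(E_n))\cong\lim_n\pi_0(E_n)=\F(X)$, and unwinding the definitions shows that $e_0\circ\incl_0\colon\F(X)\to\F(\Co(X))$ is exactly this isomorphism; since $\incl_0$ is a homeomorphism onto $L(\Co(X))$, the map $e_0\colon L(\Co(X))\to\F(\Co(X))$ is a bijection. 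Both hypotheses of Theorem~\ref{bijectioncero} then hold for $\Co(X)$, so $\Co(X)$ is $\Co$-complete. Apart from the path-connectedness lemma, every verification is a direct unravelling of the push-out construction and of the topology $\mathcal{G}_0$.
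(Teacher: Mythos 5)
Your proposal is correct and follows essentially the same route as the paper's proof: both reduce to the characterisation in Theorem~\ref{bijectioncero}, show that each piece $W_0(C)$ ($C$ a path-component of $E$) is path-connected by concatenating paths through points $x_n\in E_n$ of a countable base $E_0\supset E_1\supset\cdots$ and adjoining the end point, and use local path-connectedness to see that these pieces are clopen, so that $\pi_0(E)\to\pi_0(W_0(E))$ is a bijection and hence $\F(X)\to\F(\Co(X))$ is bijective. Your write-up is merely more explicit than the paper's about the continuity of the concatenated path at its endpoint (via the neighbourhood base $\{W_0(P_k)\}$) and about verifying the $q_0$-saturated neighbourhood condition, but the underlying ideas coincide.
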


\begin{proof} Recall that $L(\Co(X))=\bigcap_{E\in \varepsilon(X)} W_{0}(E)= \incl_{0} (\F(X))$.
Notice that if $C$ is a path-component of $E$, $C$ is open in $X$. We consider
$W_{0}(C)=p_{0}(C) \cup \incl_{0} (\eta_{0}^{-1}(q_{0}(C)))$.
Let $E_{0} \supset E_{1}\supset E_{ 2} \cdots$ be a countable base of the externology $\varepsilon(X)$.
To prove that there is a path from
$x\in p_{0}(C)$ to $b \in \eta_{0}^{-1}(q_{0}(C))$, we can take points $x_{n} \in E_{n}\subset E$ and
paths $\alpha_{n}$ from $x_{n}$ to $x_{n+1}$ to construct an exterior map $\alpha \colon [0, \infty) \to X$
($[0,\infty)$ with the cocompact externolgy) and an induced map $(p_{0}\alpha)' \colon [0,1]\cong [0,\infty)\cup \{\infty\} \to \Co(X)$ which is a path
from $x$ to $b$ in $W_{0}(C)$. This implies that $W_{0}(C)$ is path-connected.
Then
$\pi_{0}(E)\to \pi_{0}(W_{0}(E))$ verifies that
the path-component of $W_{0}(E)$ that contains $C$ also contains $W_0(C)$ and it  is surjective. Since $X$ is locally path-connected we have that
$\incl_{0}  (\eta_{0}^{-1}(q_{0}(C)))$ is open and closed in $\incl_{0}  (\F(X))$. This implies that
the path-components of $\incl_{0}  (\F(X))$ are singletons. Thus we obtain that $\pi_{0}(E)\to \pi_{0}(W_{0}(E))$ is
injective. Since for every $E$, $\pi_{0}(E)\to \pi_{0}(W_{0}(E))$  is a bijection, we have that
$\F(X)\to \F(\Co(X))$ is bijective. We also have the commutative diagram
$$\xymatrix{L(X) \ar[r] \ar[d] & L( \Co(X) ) \ar[d] \\ \F(X) \ar[r]  \ar[ur]^{\incl_{0} }& \F ( \Co(X) )}$$

Since $\ \F(X) \to L( \Co(X) )$, and $\F(X)\to \F(\Co(X))$ are bijective, it follows that
$L( \Co(X) ) \to \F ( \Co(X) )$ is a continuous bijection. By the definition of topology and externology
in $\Co(X)$  it is easy to check  that $\Co(X)$ satisfies the condition given in Theorem \ref{bijectioncero}.
Then,  it follows that $\Co(X)$ is  $\Co$-complete.

\end{proof}

\begin{definition} A net $x_{\delta}$ in an exterior space $(X,\varepsilon(X))$  is said to be
a \emph{$\varepsilon(X)$-net}  if for every $E\in \varepsilon(X)$ there is $\delta_0 $ such that
for every $\delta \geq \delta_0$, $x_{\delta}\in E$~.
A net $x_{\delta}$ is  said to be
a \emph{$\pi_0$-$\varepsilon(X)$-net}  if for every $E\in \varepsilon(X)$ there is a path-component $C$ of $E$ and  there is $\delta_0 $ such that
for every $\delta \geq \delta_0$, $x_{\delta}\in C$~.
\end{definition}

 \begin{theorem}\label{lpcomplete} Let $X$ be a locally path-connected  exterior space and for $x\in L(X)$ and
 $E \in \varepsilon (X)$ denote $C_{E}(x)$ the path-component of $x$ in $E$.
Then, $X$ is  $\Co$-complete if and only if $X$ satisfies the
following conditions:
\begin{itemize}
  \item[{\rm (i)}]  for every $x\in L(X)$ and $U\in (\t_{X})_{x}$, there is $E \in \varepsilon (X)$ such that $C_{E}(x)\subset U$,
  \item[{\rm(ii)}]  for every $x, y \in L(X)$, $x\not = y$, there is $E \in \varepsilon (X)$ such that $C_{E}(x)\cap C_{E}(y)= \emptyset$,
  \item[{\rm(iii)}] if  $x_{\delta}$ is a $\pi_0$-$\varepsilon(X)$-net, then there is  $x\in L(X)$ such that
$x_{\delta}\to x$~.
\end{itemize}
 \end{theorem}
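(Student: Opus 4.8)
The plan is to deduce this characterization from Theorem~\ref{bijectioncero} by reinterpreting its two conditions, for a locally path-connected exterior space, in terms of path-components and convergence of $\pi_0$-$\varepsilon(X)$-nets. Since $X$ is locally path-connected, every path-component $C_E(x)$ of an exterior open set $E$ is itself open in $X$ and is $q_0$-saturated in $E$; this is the key simplification that will let us replace the abstract "$q_0$-saturated open $W$ with $x\in W\subset U$" in Theorem~\ref{bijectioncero} by the concrete family $\{C_E(x)\}_{E\in\varepsilon(X)}$.

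First I would handle the "only if" direction. Assume $X$ is $\Co$-complete. By Theorem~\ref{bijectioncero}, $e_0\colon L(X)\to\F(X)$ is bijective and the saturation condition holds. Condition~(ii) is then immediate from injectivity of $e_0$ via Proposition~\ref{bijective}(ii): distinct $x,y\in L(X)$ lie in disjoint path-components of some $E$, which are exactly $C_E(x)$ and $C_E(y)$. For condition~(i), given $x\in L(X)$ and $U\in(\t_X)_x$, the saturation condition gives $E\in\varepsilon(X)$ and a $q_0$-saturated open $W\subset E$ with $x\in W\subset U$; since $W$ is a union of path-components of $E$, it contains $C_E(x)$, so $C_E(x)\subset W\subset U$. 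For condition~(iii), let $x_\delta$ be a $\pi_0$-$\varepsilon(X)$-net. For each $E\in\varepsilon(X)$ choose the path-component $C^E$ of $E$ eventually containing the net; the family $(C^E)_E$ is compatible under inclusion (if $E\subset E'$ then $C^E\subset C^{E'}$ since both eventually contain the net), hence defines a point $a\in\F(X)$. As $e_0$ is bijective there is $x\in L(X)$ with $e_0(x)=a$, i.e. $x\in C^E$ for all $E$, so $C^E=C_E(x)$. Then convergence $x_\delta\to x$ follows from condition~(i): any neighbourhood $U$ of $x$ contains some $C_E(x)=C^E$, which eventually contains the net.

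For the "if" direction, assume (i)--(iii) and verify the two hypotheses of Theorem~\ref{bijectioncero}. Injectivity of $e_0$ is exactly Proposition~\ref{bijective}(iii), whose condition $\bigcap_E C_E(x)=\{x\}$ follows from (i): if $y\ne x$ also lay in every $C_E(x)$, apply (ii) to get disjoint $C_E(x)$, $C_E(y)$, a contradiction since $y\in C_E(x)$ forces $C_E(y)=C_E(x)$. Alternatively injectivity follows directly from (ii) as in the other direction. The saturation condition of Theorem~\ref{bijectioncero} is immediate from (i): given $x\in U$ open, pick $E$ with $C_E(x)\subset U$ and take $W=C_E(x)$, which is open and $q_0$-saturated in $E$ by local path-connectedness. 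The remaining point is surjectivity of $e_0$, i.e. that every end point is $e_0$-representable; this is where condition~(iii) does its real work. Given $a=(C_E)_E\in\F(X)$, I would build a $\pi_0$-$\varepsilon(X)$-net indexed by pairs $(E,p)$ with $E\in\varepsilon(X)$ and $p\in C_E$, ordered by $(E,p)\le(E',p')$ iff $E\supset E'$, with value $p'$; this net is eventually in each $C_E$ by construction, hence is a $\pi_0$-$\varepsilon(X)$-net, so by (iii) it converges to some $x\in L(X)$. One then checks $e_0(x)=a$: for each $E$, since the net is eventually in $C_E$ and converges to $x$, while $C_E$ is open (local path-connectedness), $x$ must lie in $\overline{C_E}$; but one needs $x\in C_E$ itself.

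The main obstacle is precisely this last step — showing the limit $x$ of the net actually lands in each $C_E$ rather than merely its closure, and that $x\in L(X)$. To settle it I would use that $L(X)=\bigcap_{E'\in\varepsilon(X)}E'$ and that each $C_E$ is open: fix $E$; for any $E'\subset E$ in $\varepsilon(X)$, the net is eventually in $C_{E'}\subset C_E$, and since $C_E$ is open and $x=\lim x_\delta$ we would like $x\in C_E$, which holds provided $x\in E$ — guaranteed because $x\in L(X)\subset E$ once we know $x\in L(X)$; and $x\in L(X)$ because the net is eventually in every $E'\in\varepsilon(X)$ (being eventually in $C_{E'}\subset E'$) so $x$, being a limit, lies in $\overline{E'}$, and combined with condition~(i) applied at $x$ — or more cleanly, by noting the net is a $\varepsilon(X)$-net and reusing the convergence point supplied by (iii) which by hypothesis lies in $L(X)$. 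Once $x\in L(X)$ is established, openness of $C_E$ gives $x\in C_E$ for every $E$, hence $e_0(x)=a$ and $e_0$ is surjective. Then Theorem~\ref{bijectioncero} applies and $X$ is $\Co$-complete.
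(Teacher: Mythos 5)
Your proof is correct and follows essentially the same route as the paper: both directions are reduced to Theorem~\ref{bijectioncero}, with (i) plus local path-connectedness supplying the $q_0$-saturation condition, (ii) giving injectivity of $e_0$, and (iii) giving surjectivity of $e_0$ via a $\pi_0$-$\varepsilon(X)$-net chosen inside the path-components representing a given end point. The paper's version is much terser (the ``only if'' direction and the verification $e_0(x)=a$ are left as ``easy to check''), but the details you supply --- in particular settling $x\in C_E$ rather than merely $x\in\overline{C_E}$ by combining $x\in L(X)\subset E$ with the openness of path-components of exterior open sets --- are exactly the intended ones.
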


  \begin{proof}
  If $X$ is  a $\Co$-complete exterior space, it is easy to check (i),(ii) and (iii).
  Conversely, if $X$ verifies (i), (ii) and (iii), to prove that $X$ is  $\Co$-complete, we have that condition (i) and the fact that $X$ is locally path-connected imply  the corresponding condition given in Theorem \ref{bijectioncero}. Then,    it suffices to check
 that the canonical continuous map $e_{0}\colon L(X) \to \F(X)$ is
  a bijection.   Suppose that $a \in \F(X)$ and $a=\{q_{0}^{-1}\eta_{0}^{E}(a)|E \in \varepsilon (X)\}$. Take $x_{E}\in q_{0}^{-1}(\eta_{0}^{E}(a))$, then $x_{E}$ is a $\pi_0$-$\varepsilon(X)$-net. By (iii), there is  $x\in L(X)$ such that $x_{E}\to x$. It is easy to check that $e_{0}(x)=a$. This implies that
$e_{0}\colon L(X) \to \F(X)$ is surjective. We can also see that (ii) implies that $e_0$ is injective.

\end{proof}

An interesting class of Hausdorff compact $\Co$-complete spaces are given in the following result:

\begin{theorem} \label{completeex}
Suppose that $X$  is a locally path-connected compact  Hausdorff space and  $D\subset X$ is a closed totally disconnected subspace.
Taking $\varepsilon(X)=\{U | D\subset U, U \in \t_X\}$, then one has $X$ is  $\Co$-complete.
\end{theorem}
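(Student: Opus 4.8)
The plan is to verify the conditions of Theorem~\ref{lpcomplete}, since $X$ is locally path-connected by hypothesis. First I would check condition (i). For $x \in L(X)$ we have $x \in \bigcap_{E \in \varepsilon(X)} E = \bigcap\{U : D \subset U, U \in \t_X\}$, which (since $X$ is compact Hausdorff, hence regular) equals $\overline{D} = D$; so in fact $L(X) = D$. Given an open neighborhood $U$ of $x \in D$, I want $E \in \varepsilon(X)$ with $C_E(x) \subset U$, i.e., an open $E \supset D$ whose path-component of $x$ lies in $U$. The key point is that $D$ is closed and totally disconnected in a compact Hausdorff space, hence $D$ is a Stone space; in particular, for $x \in D$ and a neighborhood $U$ of $x$, there is a relatively clopen subset $D_0$ of $D$ with $x \in D_0 \subset U \cap D$, and by compactness $D_0$ and $D \setminus D_0$ are disjoint closed sets in $X$, so they have disjoint open neighborhoods $V_0, V_1$; then $E = (V_0 \cap U) \cup V_1 \in \varepsilon(X)$ since it contains $D$, and I would argue that the path-component of $x$ in $E$ is contained in $V_0 \cap U \subset U$, because $V_0 \cap U$ and $V_1$ are disjoint open sets covering $E$. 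That gives (i).

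Next, condition (ii): for distinct $x, y \in L(X) = D$, since $D$ is totally disconnected and compact Hausdorff, there is a clopen subset $D_0$ of $D$ separating them; as above, thicken $D_0$ and $D \setminus D_0$ to disjoint open sets $V_0 \ni x$, $V_1 \ni y$ in $X$ and set $E = V_0 \cup V_1 \in \varepsilon(X)$. Then $C_E(x) \subset V_0$ and $C_E(y) \subset V_1$ are disjoint, giving (ii).

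For condition (iii), let $x_\delta$ be a $\pi_0$-$\varepsilon(X)$-net. Since $X$ is compact, the net has a cluster point $x \in X$; I would argue $x \in L(X) = D$, because for each $E \in \varepsilon(X)$ the net is eventually in $E$, so its cluster points lie in $\overline{E}$, and $\bigcap_{E}\overline{E} = \bigcap_E E = D$ (again using regularity, $\overline{E}$ and $E$ have the same intersection over the neighborhood filter of $D$). To upgrade "cluster point" to "limit," I would use condition (i) together with the $\pi_0$-refinement: given a neighborhood $U$ of $x$, pick $E$ with $C_E(x) \subset U$ by (i); the net is eventually in a single path-component $C$ of $E$, and since $x$ is a cluster point lying in $L(X)$ and the clopen-type decomposition of $E$ built above separates $C_E(x)$ from the rest, that path-component must be $C_E(x)$ itself, so the net is eventually in $C_E(x) \subset U$; hence $x_\delta \to x$.

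The main obstacle I anticipate is the interplay between path-components and the clopen structure of $D$ in steps (i) and (iii): path-components of the "thickened" open set $E$ need not be clopen in general, so the argument that $C_E(x)$ is trapped inside the chosen open piece $V_0 \cap U$ must be made carefully — the clean way is to observe that whenever $E$ is a disjoint union of two open sets, every path-component of $E$ lies entirely in one of them, which is elementary. The remaining subtlety is confirming $L(X) = D$ rigorously, i.e., that the intersection of all open neighborhoods of the closed set $D$ in a compact Hausdorff (normal) space is exactly $D$; this is standard. Once these two points are secured, conditions (i)–(iii) follow and Theorem~\ref{lpcomplete} gives that $X$ is $\Co$-complete.
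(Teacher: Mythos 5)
Your proof is correct, and it reaches the three conditions of Theorem~\ref{lpcomplete} by a genuinely different route than the paper. Where you exploit the Stone--space structure of $D$ directly --- choosing a relatively clopen $D_0\subset D$, thickening $D_0$ and $D\setminus D_0$ to disjoint open sets by normality of the compact Hausdorff $X$, and observing that a path-component of a disjoint union of two open sets lies entirely in one of them --- the paper instead verifies condition (ii) by a continuum-theoretic argument: it nests $D\subset V\subset \cl(V)\subset U$, notes $\bigcap_U C_U(x)=\bigcap_U\cl(C_U(x))$ is an inverse limit of continua and hence connected, and concludes it is $\{x\}$ because it sits inside the totally disconnected $D$. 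Your clopen-separation argument is more elementary (no appeal to the intersection theorem for continua) and has the additional merit of giving a genuine verification of condition (i) for an arbitrary neighborhood $U$ of $x\in D$; the paper dispatches (i) with the single line ``if $x\in D\subset U=E$ then $C_U(x)\subset U$,'' which only treats the case $U\supset D$, so your construction of $E=(V_0\cap U)\cup V_1$ actually fills a step the paper leaves implicit. For condition (iii) the two arguments are close in spirit --- both rest on compactness --- but the paper extracts a point of $\bigcap_U C_U=\bigcap_U\cl(C_U)$ via the finite intersection property of $\{\cl(C_U)\}$, whereas you take a cluster point of the net, locate it in $D=\bigcap_E\cl(E)$, and then use your condition (i) decomposition to trap the eventual path-component inside the given neighborhood; both upgrades from cluster point to limit are sound. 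The only point to make fully explicit in your write-up is the standard fact you invoke that a compact Hausdorff totally disconnected space has a basis of clopen sets (components coincide with quasi-components), which is what licenses the choice of $D_0$.
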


\begin{proof} In order to apply Theorem \ref{lpcomplete}, we are going to check that  conditions (i), (ii) and (iii)
are satisfied:

(i) If $x\in D \subset U=E$, it is obvious that   $C_U(x)\subset U$.

(ii) Under these topological conditions, given an open $U$ such that $D\subset U$, there is
an open $V$ such that $D\subset V \subset \cl(V) \subset U$, where $\cl(V)=\overline{V}$. Now
if $x\in L(X)=D$ one has that $\bigcap_{U\in \varepsilon(X)} C_U(x)=\bigcap_{U\in \varepsilon(X)} \cl(C_U(x))\subset D$. Since the inverse limit of continua is a continuum, we have that $\bigcap_{U\in \varepsilon(X)} C_U(x)$ is a connected subset of $D$. Taking into account that $D$ is totally disconnected, one has that
$\bigcap_{U\in \varepsilon(X)} C_0^U(x)=\{x\}$. This implies condition (ii).

(iii) Suppose that $x_\delta$ is a $\pi_0$-$\varepsilon(X)$-net. From the definition of $\pi_0$-$\varepsilon(X)$-net, for each $U\in \varepsilon(X)$ there is a path-componente $C_U$ and $\delta_U$ such that $x_\delta \in C_U$
for $\delta \geq  \delta_U$. This implies that the family $\{\cl(C_U)\}$ satisfies the finite intersection property.
Since $X$ is compact, we have that $\bigcap_{U}\cl(C_U)=\bigcap_{U}C_U$ is non empty. It is easy to
check that if $x\in \bigcap_{U}C_U\subset D$, we have that  $x_\delta\to x.$

\end{proof}

  \begin{theorem}\label{condHausdorff} Let $X$ be a locally path-connected exterior space.
   \begin{itemize}
 \item[(i)]  If $x,x'\in L(\C_{0}(X))$, $x\not = x'$, there are open subsets $W,W'$ in $\Co(X)$ such that $x\in W$, $x'\in W'$ and $W\cap W' =\emptyset$. In particular, we have that $\F(X) \cong L(\C_{0}(X))$ is a Hausdorff space.
  \item[(ii)]  If $X$ is a Hausdorff space and  for every $x \in X \setminus L(X)$ there is a closed neighborhood $F$ at $x$ such that
  $X\setminus F$ is exterior, then  $\Co(X)$ is a Hausdorff space. In this case $L(X)$ is a closed subset of $X$.
\end{itemize}
 \end{theorem}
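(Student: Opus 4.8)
For part (i), the plan is to use the structure of the topology $\mathcal{G}_0$ together with the local path-connectedness of $X$. Since $L(\Co(X)) \cong \incl_0(\F(X))$, two distinct points $x, x'$ correspond to two distinct end points $a, a' \in \F(X)$, represented by threads $(C_E)_{E}$ and $(C'_E)_E$ of path-components. Because $a \neq a'$, there is some $E \in \varepsilon(X)$ with $C_E \neq C'_E$, hence $C_E \cap C'_E = \emptyset$ since distinct path-components are disjoint. As $X$ is locally path-connected, each path-component $C_E$ is open in $E$, hence open in $X$, so $C_E$ and $C'_E$ are $q_0$-saturated open subsets of $E$. Then $W_0(C_E)$ and $W_0(C'_E)$ are members of $\mathcal{G}_0$ (by the Lemma on $q_0$-saturated sets), they contain $x$ and $x'$ respectively, and since $p_0^{-1}(W_0(C_E)) = C_E$ disjoint from $p_0^{-1}(W_0(C'_E)) = C'_E$ and $\incl_0^{-1}(W_0(C_E)) = \eta_0^{-1}(q_0(C_E))$ disjoint from $\incl_0^{-1}(W_0(C'_E))$ (again because $C_E, C'_E$ are distinct path-components), these open sets are disjoint. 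This also gives the Hausdorffness of $\F(X) \cong L(\Co(X))$ as a subspace.

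For part (ii), I would separate points of $\Co(X)$ by cases. If both points lie in $p_0(X \setminus L(X))$, use that $X$ is Hausdorff and that $p_0$ restricted to $X \setminus L(X)$ is an open embedding (the push-out map is injective and, away from $L(X)$, open by construction of $\mathcal{G}_0$), so the images of disjoint $X$-open separating neighborhoods work. If both lie in $L(\Co(X)) = \incl_0(\F(X))$, apply part (i). The remaining case is one point $p_0(x)$ with $x \in X \setminus L(X)$ and one point $\incl_0(a)$ with $a \in \F(X)$: here I would use the hypothesis to pick a closed neighborhood $F$ of $x$ with $X \setminus F$ exterior; then $W_0(X \setminus F)$ is an exterior open set in $\Co(X)$ containing $\incl_0(\F(X)) \ni \incl_0(a)$, while $p_0(\Int F)$ is an open neighborhood of $p_0(x)$, and these two are disjoint since $\Int F \cap (X \setminus F) = \emptyset$ and $W_0(X\setminus F)$ meets $\F(X)$-side only, whereas $p_0(\Int F) \subset p_0(X)$ with $p_0^{-1}(p_0(\Int F)) \subset \Int F$. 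Finally, to see $L(X)$ is closed in $X$: for $x \in X \setminus L(X)$, the chosen closed neighborhood $F$ has $X \setminus F \in \varepsilon(X)$, so $L(X) = \bigcap_{E} E \subset X \setminus F$, whence $\Int F$ is an open neighborhood of $x$ disjoint from $L(X)$.

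The main obstacle I anticipate is the mixed case in part (ii): verifying carefully that $p_0(\Int F)$ is genuinely open in the $\mathcal{G}_0$-topology (it avoids $L(X)$, so the saturation condition (ii) in the definition of $\mathcal{G}_0$ is vacuous there, but one must confirm $p_0^{-1}(p_0(\Int F))$ equals $\Int F$, which uses that $\Int F$ contains no point of $L(X)$ and that $p_0$ is injective) and that $W_0(X \setminus F)$ really is disjoint from it. One must also be slightly careful that the representative path-components of the end $a$ eventually lie in $X \setminus F$, which is automatic since $X \setminus F \in \varepsilon(X)$ and every thread defining $a$ assigns to $X\setminus F$ one of its path-components. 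The rest is bookkeeping with the push-out topology and the explicit description of basic open sets $W_0(V)$.
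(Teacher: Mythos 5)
Your proposal is correct and follows essentially the same route as the paper's proof: part (i) separates distinct ends via an $E$ where the threads differ and uses the basic open sets $W_0(C_E)$, $W_0(C'_E)$ of $q_0$-saturated path-components, and part (ii) runs the same three-case analysis, with the mixed case handled by $p_0(\mathrm{int}\,F)$ and $W_0(X\setminus F)$. Your additional explicit argument that $L(X)$ is closed (via $L(X)\subset X\setminus F$ for each $x\notin L(X)$) is a correct filling-in of a step the paper only asserts.
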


 \begin{proof}
 (i) If $x\not =x'$, $x=\incl_0(a),\, x'=\incl_0(a')$, then there is $E\in\varepsilon(X)$ such that
$\eta_0(a)\not = \eta_0(a')$. If $C=q_0^{-1}(\eta_0(a)), C'=q_0^{-1}(\eta_0(a))$, since  $X$ is locally path-connected
we have that $C, C'$ are open and $C\cap C'=\emptyset$. This implies that
$x\in W_0(C), x'\in W_0(C'), W_0(C)\cap W_0(C')=\emptyset$.

(ii) We can complete the cases analyzed in (i) as follows: If $x\not =x'$ and $\{x,x'\}\cap L(\Co(X))=\emptyset$,
then $x=p_0(\tilde x), x'=p_0(\tilde x') $. Now by the hypothesis of (ii) we can construct open subset
$U, U'$ of $X$ such that $\tilde x \in U,\, \tilde x' \in U',\, U\cap U'=\emptyset$ $(U\cup U')\cap L(X)= \emptyset$.
This implies that the  open subsets $ p_0(U),  p_0(U')$ separate $x,x'$. In the case $x\not \in L(\Co(X))$ and
$x' \in  L(\Co(X))$, then $x=p_0(\tilde x)$ and $\tilde x\not \in L(X)$. By hypothesis, there is a closed neighborhood $F$ at $\tilde x$ such that
  $X\setminus F$ is exterior. Then, $p_0(\mbox{int}F)$ and $W_0(X\setminus F)$ separate $x, x'$.
 \end{proof}

\begin{proposition} Let $X$ be an exterior space.
\begin{itemize}
\item[{\rm(i)}] If $x_{\delta}$ is a $\pi_0$-$\varepsilon(X)$-net, then there is   $x\in L(\Co(X))$ such that
$p_{0}(x_{\delta})\to x$.
\item[{\rm(ii)}] Suppose that $X$ is locally path-connected.  If $x_{\delta}$ is a $\pi_0$-$\varepsilon(X)$-net, then there is  a unique   $x\in L(\Co(X))$ such that
$p_{0}(x_{\delta})\to x$.
\end{itemize}
\end{proposition}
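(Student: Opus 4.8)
The plan is to prove both parts directly from the definitions of the completion topology $\mathcal{G}_0$ and the externology $\{W_0(E)\}$, using the description of $L(\Co(X))$ as $\incl_0(\F(X))$.

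For part (i), given a $\pi_0$-$\varepsilon(X)$-net $x_\delta$, I would first extract the end point it determines: for each $E\in\varepsilon(X)$ let $C_E$ be the path-component of $E$ that eventually contains the net (given by the definition of $\pi_0$-$\varepsilon(X)$-net), and check that if $E\subset E'$ then $C_E\subset C_{E'}$, so that $a=(C_E)_{E\in\varepsilon(X)}\in\F(X)$ is a well-defined end point. Set $x=\incl_0(a)\in L(\Co(X))$. To see $p_0(x_\delta)\to x$, I take a basic open neighborhood $W$ of $x$ in $\Co(X)$; by condition (ii) in the definition of $\mathcal{G}_0$ there is $E\in\varepsilon(X)$ and an open $G\subset\pi_0(E)$ with $a\in\eta_0^{-1}(G)\subset\incl_0^{-1}(W)$ and $q_0^{-1}(G)\subset p_0^{-1}(W)$. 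Since $\eta_0(a)=C_E\in G$, the whole path-component $C_E$ lies in $q_0^{-1}(G)\subset p_0^{-1}(W)$, and the net is eventually in $C_E$, hence $p_0(x_\delta)$ is eventually in $W$. This gives convergence.

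For part (ii), uniqueness, I would argue that when $X$ is locally path-connected the path-components $C_E$ are open in $X$, so $L(\Co(X))=\incl_0(\F(X))$ is Hausdorff by Theorem \ref{condHausdorff}(i); in a Hausdorff space limits of nets are unique, so $p_0(x_\delta)$ has at most one limit in $L(\Co(X))$, and part (i) supplies one. Alternatively, and more self-containedly, I can show directly that two distinct end points $a\neq a'$ can be separated: pick $E$ with $\eta_0(a)=C\neq C'=\eta_0(a')$, note $C,C'$ are disjoint open sets, and observe $W_0(C), W_0(C')$ are disjoint $\mathcal{G}_0$-open neighborhoods of $\incl_0(a),\incl_0(a')$; a net cannot be eventually in both disjoint sets, so it converges to at most one of the corresponding points.

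I expect the only mildly delicate point to be the bookkeeping in part (i): verifying that the eventual path-components $C_E$ are coherent (respect inclusions of exterior open subsets) so that they genuinely assemble into a point of the inverse limit $\F(X)$, and then matching the basic-neighborhood description of $\mathcal{G}_0$ to the net condition. Everything else is a routine unwinding of the push-out topology and the definition of $W_0(V)$; the local path-connectedness in (ii) is used only to invoke the already-proved Hausdorff statement and to know path-components are open. No genuine obstacle is anticipated.
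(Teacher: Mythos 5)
Your proposal is correct and follows essentially the same route as the paper's proof: part (i) assembles the eventual path-components $C_E$ into an end point $a\in\F(X)$, sets $x=\incl_0(a)$, and uses the basic-neighborhood description of $\mathcal{G}_0$ to get $C_E\subset q_0^{-1}(G)\subset p_0^{-1}(W)$; part (ii) invokes the separation statement of Theorem \ref{condHausdorff}(i), exactly as the paper does (your alternative direct separation via $W_0(C)$, $W_0(C')$ is just the content of that theorem's proof unwound). No gaps.
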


\begin{proof}(i) Suppose that $x_{\delta}$ is a $\pi_0$-$\varepsilon(X)$-net, then for each $E\in \varepsilon(X)$, there is
a path-component $C_E$ of $E$ and $\delta_E $ such that for every $\delta \geq \delta_E $, $x_{\delta}\in C_E$. If is easy to check that if $E'\subset E $, then $C_{E'} \subset C_{E}$. This implies that $a=(C_E)_{E\in \varepsilon(X)} \in \F(X)$. Take $x=\incl_0(a)\in \incl_0(\F(X))=L(\Co(X))$ and suppose that $W$ is a open neighbourhood at $x$ in $\Co(X)$. Then there is $E \in \varepsilon(X)$ and  an open $G$ of $\pi_0(E)$ such that $q_0^{-1}(G)\subset p_0^{-1}(W)$ and $a\in \eta_0^{-1}(G)\subset \incl_0^{-1}(W)$. This implies that $C_E\subset q_0^{-1}(G)\subset p_0^{-1}(W)$, therefore $p_0(x_{\delta}) \in W$ for
every $\delta \geq \delta_E$. Then $p_0(x_{\delta})\to x$.

(ii) Now we suppose that  that $X$ is locally path-connected and $p_0(x_{\delta})\to x,\, p_0(x_{\delta})\to x', \,
x, x' \in L(\Co(X)) $. If $x\not =x'$, by (i) of Theorem \ref{condHausdorff}, $x, x'$ can be separated by disjoint open subsets $W,W'$.
This fact  contradicts that $p_0(x_{\delta})\to x,\, p_0(x_{\delta})\to x'$.
\end{proof}

 \begin{remark} There are $\Co$-complete  locally path-connected exterior   spaces  $X$ having a  $\pi_0$-$\varepsilon(X)$-net  $x_{\delta}$ such that
 there is   $y\in X\setminus L(X)$ such that
$x_{\delta}\to y$. Nevertheless, there is a unique $x\in L(X)$ with $x_{\delta}\to x$.
 \end{remark}

 \begin{remark} It is remarkable that if $X$ is a $\Co$-complete exterior space, one has that $L(X)$ is a ``weak attractor'' of $X$ in
the sense that every $\pi_0$-$\varepsilon(X)$-net  $x_{\delta}$
has a limit point in $L(X)$. If $X$ is also locally
path-connected, then $x_{\delta}$ may have different limit points,
but there is a unique limit point in $L(X)$. If, in addition,  $X$
is also Hausdorff, then
  $L(X)$ is closed and every $\pi_0$-$\varepsilon(X)$-net  has a
unique limit point in $X$. Then
   $L(X)$ is an ``attractor'' in the sense that every $\pi_0$-$\varepsilon(X)$-net  $x_{\delta}$ has a unique limit point and this limit point is in $L(X)$. In this case, since $L(X)=L(\Co(X))\cong \F(X)$, one has that
   $L(X)$ is a Hausdorff,  totally disconected space (a prodiscrete space).
 \end{remark}

Next we analyze the compactness properties of completions:

 \begin{theorem}\label{condcompact} Let $X$ be an exterior space.
Suppose that  for every $E\in \varepsilon(X)$, there is $E' \in \varepsilon(X)$ such that
$E' \subset E$ and   the image of $\pi_{0}(E')\to \pi_{0}(E) $ is finite. Then,

   \begin{itemize}
  \item[(i)] $L(\C_{0}(X))$ is compact,
       \item[(ii)]   Denote $\eta_{0,E}(\F(X))$ the image of $\eta_{0,E}\colon \F(X)\to \pi_{0}(E)$. If for every $E\in \varepsilon(X)$,
       $X\setminus (\bigcup_{C \in \eta_{0,E}(\F(X))}C) $ is compact,  then $\Co(X)$ is compact.

 \end{itemize}
 \end{theorem}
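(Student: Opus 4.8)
The plan is to handle the two parts in order, leaning on the earlier identifications $L(\Co(X))\cong\incl_0(\F(X))$ and $\incl_0\colon\F(X)\to\incl_0(\F(X))$ a homeomorphism, so that (i) reduces to showing the inverse limit $\F(X)=\lim_{E\in\varepsilon(X)}\pi_0(E)$ is compact. First I would observe that the hypothesis forces $\eta_{0,E}(\F(X))$ to be finite for every $E\in\varepsilon(X)$: the map $\eta_{0,E}\colon\F(X)\to\pi_0(E)$ factors through $\pi_0(E')\to\pi_0(E)$ for the $E'\subset E$ given by the hypothesis, so $\eta_{0,E}(\F(X))$ sits inside that finite image. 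Then I realize $\F(X)$ as a subset of $\prod_{E}\eta_{0,E}(\F(X))$ cut out by the thread conditions $\eta_{E'}^{E}(c_E)=c_{E'}$ for $E\subset E'$. Putting the discrete topology on each finite factor, this product is compact by Tychonoff, and $\F(X)$ is closed in it, being an intersection of equalizers of continuous maps into Hausdorff spaces; hence it is compact with the discrete-factor topology. Since the discrete topology on each $\pi_0(E)$ is finer than its quotient topology, the actual inverse-limit topology on $\F(X)$ is the continuous image, under the identity, of this compact topology, so it too is compact. That proves (i).

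For (ii) I would take an arbitrary open cover $\{W_i\}$ of $\Co(X)$. By (i), $\incl_0(\F(X))=L(\Co(X))$ is compact, so finitely many members cover it; let $W$ be their union. The crux is the claim that $\Co(X)\setminus W$ is compact, after which adding finitely many more $W_i$'s covering it finishes the argument. Since $\incl_0(\F(X))\subset W$ and the image of $\F(X)$ exhausts $\Co(X)\setminus p_0(X\setminus L(X))$, one gets $\Co(X)\setminus W=p_0(X\setminus p_0^{-1}(W))$, and as $p_0$ is continuous it is enough to show $X\setminus p_0^{-1}(W)$ is compact.

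To do that, for each $a\in\F(X)$ apply the defining property of the topology $\mathcal{G}_0$ at the point $\incl_0(a)\in W$: there are $E_a\in\varepsilon(X)$ and an open $G_a\subset\pi_0(E_a)$ with $a\in\eta_0^{-1}(G_a)\subset\incl_0^{-1}(W)$ and $q_0^{-1}(G_a)\subset p_0^{-1}(W)$. The sets $\eta_0^{-1}(G_a)$ form an open cover of the compact space $\F(X)$; pick a finite subcover indexed by $a_1,\dots,a_m$, set $E^{*}=E_{a_1}\cap\cdots\cap E_{a_m}\in\varepsilon(X)$ and $G^{*}=\bigcup_{j}(\eta_{E_{a_j}}^{E^{*}})^{-1}(G_{a_j})$. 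Using that $\eta_{E_{a_j}}^{E^{*}}$ carries the $E^{*}$-path-component of a point to its $E_{a_j}$-path-component, one checks that $G^{*}$ is open in $\pi_0(E^{*})$, that $\eta_{0,E^{*}}(\F(X))\subset G^{*}$, and that $q_0^{-1}(G^{*})\subset p_0^{-1}(W)$. Hence $\bigcup_{C\in\eta_{0,E^{*}}(\F(X))}C\subset q_0^{-1}(G^{*})\subset p_0^{-1}(W)$, so $X\setminus p_0^{-1}(W)\subset X\setminus\bigcup_{C\in\eta_{0,E^{*}}(\F(X))}C$, which is compact by hypothesis applied to $E^{*}$. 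Being closed in $X$, $X\setminus p_0^{-1}(W)$ is then compact, and so is its continuous image $\Co(X)\setminus W$.

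The step I expect to require the most care is (i): choosing the right compact model $\prod_{E}\eta_{0,E}(\F(X))$ with discrete finite factors and checking that replacing the discrete topology by the quotient topology on the factors only coarsens, hence preserves, compactness of the inverse limit. The bookkeeping in (ii) — matching path-components under the bonding maps $\eta_{E_{a_j}}^{E^{*}}$ so that the containments for $G^{*}$ follow from those for the $G_{a_j}$ — is routine, and the rest of (ii) is the standard "compactness of the complement of a neighbourhood of the limit" argument, now made legitimate by (i) and the explicit description of $\mathcal{G}_0$.
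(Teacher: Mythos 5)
Your proof is correct and follows essentially the same route as the paper's: part (i) rests on the compactness of $\F(X)$ as an inverse limit with eventually finite images (you spell out via Tychonoff and a topology-comparison what the paper simply asserts as ``$\F(X)$ is profinite''), and part (ii) is the same argument of covering the compact limit by finitely many basic neighbourhoods, intersecting the corresponding exterior sets to a single $E^{*}$, and invoking the hypothesis to make the leftover piece $X\setminus\bigl(\bigcup_{C\in\eta_{0,E^{*}}(\F(X))}C\bigr)$ compact. The only cosmetic difference is that you extract the finite subcover from $\F(X)$ using the sets $\eta_0^{-1}(G_a)$ rather than from $L(\Co(X))$ using the sets $W_0(q_0^{-1}(G_x))$, which changes nothing of substance.
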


\begin{proof}(i) The hypothesis condition implies that $\F(X)$ is a profinite space
(i.e., an inverse limit of finite discrete spaces). Since a profinite space is compact
and $L(\Co(X))=\incl_0(\F(X))$, we have that $L(\Co(X))$ is compact.

(ii) Take an open covering $\{W_i|i\in I\}$ of $\Co(X)$. For each $x\in L(\Co(X))$, there is $W_{i_x}$ such that
$x\in W_{i_x}$. By the definition of the topology $\mathcal{G}_0$, there is $E_{x} \in \varepsilon(X)$ and
$G_x$ open in $\pi_0(E_x)$ such that $x\in W_{0}(q_{0}^{-1}(G_x))\subset W_{i_x}$. Since $L(\Co(X))$ is compact,
there is a finite set $\{x_1, \cdots x_k\}$ such that $L(\Co(X))\subset  W_{0}(q_{0}^{-1}(G_{x_1})) \cup \cdots \cup W_{0}(q_{0}^{-1}(G_{x_k}))$. Take $E= E_{x_1}\cap \cdots \cap E_{x_k} \in \varepsilon(X)$ and $G'_{x_i}$  the inverse image
of $G_{x_i}$ via the map $\pi_0(E)\to \pi_0(E_{x_i})$. Then, one has $$L(\Co(X))\subset  W_{0}(q_{0}^{-1}(G'_{x_1})) \cup \cdots \cup W_{0}(q_{0}^{-1}(G'_{x_k}))\subset W_{x_1}\cup \cdots \cup W_{x_k} .$$
Since by hypothesis $X\setminus (q_{0}^{-1}(G'_{x_1})\cup \cdots \cup q_{0}^{-1}(G'_{x_k}))$ is a closed compact subset of $X$ we have that $p_0(X\setminus (q_{0}^{-1}(G'_{x_1})\cup \cdots \cup q_{0}^{-1}(G'_{x_k})))$ is compact.
Taking into account that
$$\Co(X)=(W_{0}(q_{0}^{-1}(G'_{x_1})) \cup \cdots \cup W_{0}(q_{0}^{-1}(G'_{x_k}))) \cup p_0(X\setminus (q_{0}^{-1}(G'_{x_1})\cup \cdots\cup q_{0}^{-1}(G'_{x_k})))$$
the finite family $\{W_{i_{x_1}}, \cdots, W_{i_{x_k}}\}$ together with a finite family covering of $p_0(X\setminus (q_{0}^{-1}(G'_{x_1})\cup \cdots \cup q_{0}^{-1}(G'_{x_k})))$
give a finite subcovering of the space $\Co(X)$.

\end{proof}

\begin{theorem}\label{main} Suppose that $X$ is a  locally path-connected, connected,  Hausdorff exterior space and $ \varepsilon(X)\subset \varepsilon^{\e}(X)$.
If for every
$x\in X\setminus L(X)$ there is a closed neighbourhood $F$ such that $X\setminus F$ is exterior,
then $\Co(X)$ is a Hausdorff  compact space and $L(X)$ is a closed subspace.
\end{theorem}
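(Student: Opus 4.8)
The strategy is to verify the three hypotheses of Theorem \ref{condcompact} together with the Hausdorff hypothesis of Theorem \ref{condHausdorff}(ii), so that both conclusions follow by combining those results. First I would establish that for every $E\in\varepsilon(X)$ the image of $\pi_0(E')\to\pi_0(E)$ is finite for a suitable smaller $E'\in\varepsilon(X)$. This is where the hypothesis $\varepsilon(X)\subset\varepsilon^{\e}(X)$ (complement of an exterior open subset is compact) together with local path-connectedness and connectedness of $X$ enters: given $E$, its complement $X\setminus E$ is compact; cover $X\setminus E$ by finitely many path-connected open sets meeting it, and let $E'$ be $E$ minus (the closure of) the part of that cover that would create new components — more carefully, since $X\setminus E$ is compact and $X$ is locally path-connected, only finitely many path-components of $E$ can meet any exterior open set's complement, and one can choose $E'\subset E$ exterior so that $\eta^{E}_{E'}(\pi_0(E'))$ lands in those finitely many components. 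This gives hypothesis of Theorem \ref{condcompact}, so $L(\Co(X))\cong\F(X)$ is compact, and in fact profinite.

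Next I would check condition (ii) of Theorem \ref{condcompact}: for each $E\in\varepsilon(X)$, the set $X\setminus\bigl(\bigcup_{C\in\eta_{0,E}(\F(X))}C\bigr)$ is compact. Again using $\varepsilon(X)\subset\varepsilon^{\e}(X)$, the complement $X\setminus E$ is compact; the set in question is contained in $(X\setminus E)\cup\bigl(E\setminus\bigcup_{C\in\eta_{0,E}(\F(X))}C\bigr)$, and the second piece is the union of those path-components of $E$ not "reached from infinity." I would argue this union is closed (complement of the open union of the reached components, which by local path-connectedness is open) and that, being a closed subset contained in a set whose complement's closure issue reduces to compactness of $X\setminus E'$ for deeper $E'$, it is compact; the cleanest route is: the reached components $\bigcup_{C\in\eta_{0,E}(\F(X))}C$ is exterior (it contains some $E'\in\varepsilon(X)$ because $\F(X)=\lim\pi_0\varepsilon(X)$ forces every deep-enough $E'$ to sit inside reached components of $E$), hence its complement is compact by $\varepsilon(X)\subset\varepsilon^{\e}(X)$. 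Then Theorem \ref{condcompact}(ii) yields that $\Co(X)$ is compact.

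For the Hausdorff conclusion I would invoke Theorem \ref{condHausdorff}(ii) directly: $X$ is Hausdorff by hypothesis, and the hypothesis that every $x\in X\setminus L(X)$ has a closed neighborhood $F$ with $X\setminus F$ exterior is exactly what that theorem requires; it then gives that $\Co(X)$ is Hausdorff and that $L(X)$ is a closed subset of $X$. Combining the three parts — compactness of $L(\Co(X))$, compactness of $\Co(X)$, and the Hausdorff/closedness statement — completes the proof, and $L(X)$ closed in $X$ with $L(X)\hookrightarrow L(\Co(X))$ gives the asserted closed-subspace claim.

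**Main obstacle.** The delicate point is the first step: showing that an exterior open $E$ has only finitely many path-components "visible from infinity," i.e. producing $E'\subset E$ exterior with $\eta^{E}_{E'}(\pi_0(E'))$ finite. The compactness of $X\setminus E$ handles the components of $E$ that touch $X\setminus E$, but one must also rule out that infinitely many components of $E$ are each entirely contained in $E$ yet still survive to a limit end — here connectedness of $X$ is essential, since a component $C$ of $E$ with $\overline{C}\subset E$ would be open and closed in the connected space $X$, forcing $C=X$, a contradiction unless $X=E$. So every component of $E$ meets $X\setminus E$, and compactness plus local path-connectedness bounds their number after passing to a smaller exterior set; making this last "passing to $E'$" precise (so that the finitely many surviving components are exactly the reached ones) is the technical heart of the argument.
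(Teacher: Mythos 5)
Your overall strategy coincides with the paper's: verify the hypotheses of Theorem \ref{condcompact} and apply Theorem \ref{condHausdorff}(ii). However, the step you yourself flag as ``the technical heart'' --- producing $E'\subset E$ exterior with $\eta^{E}_{E'}(\pi_0(E'))$ finite --- is genuinely not resolved by your sketch, and the heuristic you offer for it is flawed. The phrase ``only finitely many path-components of $E$ can meet any exterior open set's complement'' is vacuous: the path-components of $E$ are contained in $E$ and meet $X\setminus E$ not at all. The corrected version, which you do reach via the connectedness argument (every component $C$ of $E$ with $\overline{C}\subset E$ would be open and closed in $X$, so $\overline{C}$ must meet $X\setminus E$), still does not give finiteness: infinitely many pairwise disjoint open sets can all have closures meeting a single compact set (think of the components $(1/(n+1),1/n)$ of an open subset of $[0,1]$, whose closures all meet the compact complement). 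So compactness of $X\setminus E$ plus local path-connectedness alone does not bound the number of surviving components.

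The missing idea, which is where the paper spends most of its effort, is to first use the closed-neighbourhood hypothesis (not only for the Hausdorff half, as you do, but also here) to manufacture $E_2\in\varepsilon(X)$ with $\overline{E_2}\subset E_1$: one shows that any closed compact $K\subset X\setminus L(X)$ has $X\setminus K$ exterior and can be thickened to $K'$ with $K\subset\mathrm{int}(K')$ and $X\setminus K'$ exterior, and applies this to $K=\Fr(E_1)$, setting $E_2=E_1\setminus K'$. Then the connectedness argument is applied not to $\Fr(E_1)$ but to $\Fr(E_2)$: every component of $E_1$ that contains a component of $E_2$ must itself (as an open set, not merely its closure) meet the compact set $\Fr(E_2)\subset E_1$, and since the components of $E_1$ are disjoint open sets covering $\Fr(E_2)$, only finitely many can do so. This is what makes the image of $\pi_0(E_2)\to\pi_0(E_1)$ finite. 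Your subsequent step (finding a deep $E_3$ contained in the union of the reached components, hence condition (ii) of Theorem \ref{condcompact}) also silently relies on this finiteness, since it requires intersecting only finitely many exterior sets, one for each unreached component in the finite image. Without the $E_2$ construction the argument does not close.
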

\begin{proof} Firstly we see, that under these conditions we also have that if $K$ is a closed compact subset of
$X$ contained in $X\setminus L(X)$, then $X\setminus K\in \varepsilon(X)$. Indeed, for each $x \in K$ there is
a closed compact neighborhood $F_x$ at $x$ such that $X\setminus F_x \in \varepsilon(X)$.
Since $K$ is compact, there is a finite $\{x_1,\cdots, x_k\}$ such that $K \subset  F_{x_1}\cup \cdots \cup F_{x_k}$.
This implies that $X\setminus K \supset X\setminus F_{x_1}\cap \cdots \cap X\setminus F_{x_k}$ and
therefore $X\setminus K\in \varepsilon(X)$.
Notice that we also have proved that if $K$ is a closed compact subset of
$X\setminus L(X)$, there is a closed compact  $K'=F_{x_1}\cup \cdots \cup F_{x_k}$ ($X\setminus K'\in \varepsilon(X)$) such that
$K \subset $int$(K')$.

Now, given $E_1\in \varepsilon(X)$, we are going to prove that there is
$E_2 \in \varepsilon(X)$ such that $\cl(E_2) \subset $int$(E_1)$. Notice that the condition
$ \varepsilon(X)\subset \varepsilon^{\e}(X)$ implies that
the frontier  $\Fr(E_1)$ is a closed compact contained in $X\setminus L(X)$.
By the argument above, there is  $K'$ such that $\Fr(E_1)\subset$int$(K')$ and
taking  $E_2=E_1\setminus K' \in \varepsilon(X)$,  we have
that $\cl(E_2) \subset E_1$.

Suppose that $C_2$ is a path-component of $E_2$ and $C_1$ the
path-component of $E_1$, such that $C_2 \subset C_1$. If $C_1 \cap \Fr(E_2)=\emptyset$, consider
$x_1\in C_1$ and  a path $\alpha \colon [0,1]\to C_1\subset E_1$ such that
$\alpha(0)=x_1$, $\alpha(1)=x_2 \in C_2$. Note that $\alpha^{-1}(E_2)=\alpha^{-1}(\cl(E_2))$ is a non-empty open and closed subset of $[0,1]$. This implies that $\alpha^{-1}(E_2)=[0,1]$; that is, $\alpha([0,1]) \subset  E_2$. Thus
one has that $x_1\in C_2$ and
$C_1=C_2$. Since $C_1$ is a closed subset of $E_1$ and $\cl(E_2)\subset E_1$, one has that   $C_1$ is an open and closed subset of $X$, contradicting the fact that $X$ is connected.  Then we have
that $C_1 \cap \Fr(E_2)\not=\emptyset$. Since $\Fr(E_2)\subset \bigcup_{C \in \pi_0(E_1)} C$ and
$\Fr (E_2)$ is compact, we obtain that there is a finite number of components having non empty intersection with
$\Fr (E_2)$. This implies that the image of $\pi_0(E_2)\to \pi_0(E_1)$ is finite.

We note that Im$(\F(X)\to \pi_0(E_1))\subset$Im$(\pi_0(E_2)\to \pi_0(E_1))$ is a finite set.

One the other hand, suppose that $E\in \varepsilon(X)$ and $C \in
\pi_0(E)$. It is easy to check that $C\in$Im$(\F(X)\to \pi_0(E))$
if and only if for every $E'\in \varepsilon(X)$ such that
$E'\subset E$, $C\cap E' \not = \emptyset$. Using this
characterization, the fact that $\varepsilon(X)$ is closed by
finite intersections and the  images above are finite, it follows
that there exists  $E_3\in \varepsilon(X)$ such that $E_3\subset
E_1$, Im$(\F(X)\to \pi_0(E_1))=$Im$(\pi_0(E_3)\to \pi_0(E_1))$ and
$E_3 \subset \bigcup_{C\in \eta_{0,E_1}(\F(X))}C$. Since
$X\setminus E_3$ is compact and $\bigcup_{C\in
\eta_{0,E_1}(\F(X))}C$ is open, we have that $X\setminus
(\bigcup_{C\in \eta_{0,E_1}(\F(X))}C)$ is compact.

Now applying  Theorems \ref{condHausdorff} and  \ref{condcompact} one has that $\Co(X)$ is a Hausdorff compact space and $L(\Co(X))$ is a closed subspace.

\end{proof}

\begin{corollary} Suppose that $X$ is a locally compact, locally path-connected, connected,  Hausdorff space and $ \varepsilon(X)=\varepsilon^{\e}(X)$.  Then $L(X)=\emptyset $, $\F(X)=\mathcal{F}(X)$ is the space of Freudenthal ends of $X$ and $\Co(X)$ is a Hausdorff  compact space. \end{corollary}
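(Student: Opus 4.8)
The plan is to derive this corollary as a direct specialization of Theorem \ref{main}, after checking that the hypotheses of that theorem are automatically satisfied in the locally compact case with $\varepsilon(X)=\varepsilon^{\e}(X)$. First I would observe that for $\varepsilon^{\e}(X)$, the complements of closed-compact subsets, the limit space is $L(X)=\bigcap_{E\in\varepsilon^{\e}(X)}E = X\setminus\bigcup_{K\ \mathrm{cl.\ cpt.}}K$; since $X$ is locally compact, every point lies in a closed compact neighborhood, so $L(X)=\emptyset$. This also means $X\setminus L(X)=X$, so the ``for every $x\in X\setminus L(X)$'' clause must be verified for all $x\in X$.

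Next I would verify the remaining hypothesis of Theorem \ref{main}: for every $x\in X$ there is a closed neighborhood $F$ of $x$ with $X\setminus F$ exterior. Local compactness gives a closed compact neighborhood $F$ of $x$; then $X\setminus F$ is the complement of a closed compact set, hence $X\setminus F\in\varepsilon^{\e}(X)$. The inclusion $\varepsilon(X)\subset\varepsilon^{\e}(X)$ required by Theorem \ref{main} holds trivially since $\varepsilon(X)=\varepsilon^{\e}(X)$. Together with the standing assumptions (locally path-connected, connected, Hausdorff), all hypotheses of Theorem \ref{main} are met, so $\Co(X)$ is a Hausdorff compact space and $L(X)$ is a closed subspace (vacuously, as it is empty).

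Finally I would identify $\F(X)$ with the Freudenthal end space. For a locally compact, connected, locally path-connected Hausdorff space, the classical Freudenthal ends $\mathcal{F}(X)$ are defined as $\varprojlim \pi_0(X\setminus K)$ over closed compact $K$; since the exterior open sets of $\varepsilon^{\e}(X)$ are exactly the sets containing some $X\setminus K$ and the complements $X\setminus K$ are cofinal in $\varepsilon^{\e}(X)$, one has $\F(X)=\varprojlim_{E\in\varepsilon^{\e}(X)}\pi_0(E)=\varprojlim_K\pi_0(X\setminus K)=\mathcal{F}(X)$. (One should note that under local connectedness, path-components and connected components of the $X\setminus K$ agree on a cofinal system, matching the usual definition.) The main point requiring a little care — though it is not really an obstacle — is checking that the cofinality argument is legitimate, i.e. that $\{X\setminus K : K\ \text{closed compact}\}$ is a base for $\varepsilon^{\e}(X)$ in the sense of the paper's definition, which is immediate from the definition of the cocompact externology. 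Everything else is a routine unwinding of definitions and an appeal to Theorem \ref{main}.
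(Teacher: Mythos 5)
Your proposal is correct and matches the paper's intent: the corollary is stated without a separate proof precisely because it is the direct specialization of Theorem \ref{main} that you carry out, with $L(X)=\emptyset$ and the closed-neighborhood hypothesis both supplied by local compactness plus Hausdorffness, and $\F(X)$ identified with the Freudenthal end space since the cocompact externology consists exactly of the complements $X\setminus K$ and, by local path-connectedness, path-components agree with the components used in the classical definition. No gaps.
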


\begin{remark}
We point out that, with the hypothesis of the corollary above, the
underlying topological space of $\Co(X)$ is exactly the
Freudenthal compactification of $X$, see \cite{Freudenthal}.
 \end{remark}

\section{The category of $\r$-exterior flows}

In this section we are going to consider the exterior space $\R^{\r}=(\R, \r)$.
We recall  (see subsection \ref{exteriorspaces})  that  $\r$  is  the following   externology:
$${\r}=\{ U| U\hspace{3pt}\mbox{is open and there is}\hspace{3pt}
n\in \N\hspace{3pt}\mbox{such that} \hspace{3pt}(n, +\infty)
\subset U\}.$$

The exterior space $\R^{\r}$
plays an important role in the definition of $\r$-\emph{exterior
flow} below.
Such notion mixes the structures of dynamical system
and exterior space (see \cite{EHR10},\cite{GHR12}):

\begin{definition}  Let $M$ be an exterior space, $M_{\t}$ denote
the subjacent topological space and $M_{\d}$ denote the set $M$
provided with the discrete topology. An  $\r$-\emph{exterior flow}
is a continuous flow $\varphi \colon \R {\times} M_{\t} \to
M_{\t}$ such that $\varphi \colon \R^{\r}\bar\times M_{\d} \to  M$
is exterior and for any $t\in \R$, $F_t\colon M \bar \times I \to M$,
$F_t(x,s)=\varphi (ts,x)$, $s\in I,~x \in M$, is
also exterior.

An  $\r$-\emph{exterior flow morphism} of $\r$-exterior flows
is a flow morphism $f\colon M \to N$   such that $f$ is exterior.
We will denote by $\bf{E^{\r} F}$ the category of $\r$-exterior
flows and $\r$-exterior flow morphisms.
\end{definition}

Given an $\r$-exterior flow $(M,\varphi) \in  \bf{E^{\r} F}$, one
also has a flow $(M_{\t},\varphi) \in \bf{F}.$ This gives a
forgetful functor $$(\cdot)_{\t} \colon \bf{E^{\r} F} \to
\bf{F}.$$

Now given a continuous flow  $X=(X,\varphi)$, an open $N \in \t_X$ is said to
be ${\r}$-\emph{exterior} if for any $x \in X$ there is $T^x \in
\r$ such that $\varphi (T^x \times \{x\}) \subset N.$  It is easy
to check that the family of $\r$-exterior subsets of $X$  is an
externology, denoted by $\varepsilon^{\r}(X),$ which gives an
exterior space $X^{\r}=(X, \varepsilon^{\r}(X))$  such that  $\varphi \colon \R^{\r}\bar \times
X_{\d} \to X^{\r} $ is  exterior
and $F_t \colon X^{\r} \bar \times I \to  X^{\r}$, $F_{t}(x,s)=\varphi( ts, x)$,  is also exterior for every
$t\in\R$.
Therefore $(X^{\r},\varphi)$ is an $\r$-exterior flow
which   is said to be the $\r$-\emph{exterior flow associated
to} $X.$ When there is no possibility of confusion, $(X^{\r},
\varphi )$ will be briefly denoted by $X^{\r}$.  Then we have a
functor
$$(\cdot)^{{\r}} \colon \bf { F}\to \bf{E^{\r} F}.$$

The category of flows can be considered as a full subcategory of the
category of exterior flows:

\begin{proposition} The functor $(\cdot)^{{\r}} \colon \bf{ F}\to \bf{E^{\r} F}$
is left adjoint to the functor $(\cdot)_{\t} \colon \bf{E^{\r}
F}\to \bf{F}.$ Moreover $(\cdot)_{\t} \, (\cdot)^{{\r}}={\rm id}$
and $\bf{F}$ can be considered as a full subcategory of
$\bf{E^{\r} F}$ via $(\cdot)^{\r}.$
\end{proposition}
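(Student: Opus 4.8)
The plan is to verify the three assertions — the adjunction, the identity $(\cdot)_{\t}\,(\cdot)^{\r}=\id$, and the full-subcategory claim — in that order, since each depends on the previous one. The key point throughout is that for a flow $X$ the externology $\varepsilon^{\r}(X)$ is, by construction, the \emph{largest} externology making $\varphi\colon \R^{\r}\bar\times X_{\d}\to X^{\r}$ exterior (and $F_t$ exterior for all $t$); this maximality is exactly what produces a left adjoint.

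First I would establish the identity $(\cdot)_{\t}\,(\cdot)^{\r}=\id$ on $\bf F$. On objects this is immediate: $(X^{\r})_{\t}$ is $X$ with its original topology and the original flow $\varphi$, so $(X^{\r})_{\t}=X$. On morphisms, a flow morphism $f\colon X\to Y$ is sent to $f^{\r}\colon X^{\r}\to Y^{\r}$, which as a continuous map is just $f$; applying $(\cdot)_{\t}$ returns $f$. So this step is essentially a bookkeeping check, together with the (routine but necessary) verification that $f^{\r}$ really is an $\r$-exterior flow morphism, i.e. that $f^{-1}(N)\in\varepsilon^{\r}(X)$ whenever $N\in\varepsilon^{\r}(Y)$: given $x\in X$, pick $T^{f(x)}\in\r$ with $\varphi(T^{f(x)}\times\{f(x)\})\subset N$; since $f$ is a flow morphism, $f(\varphi(T^{f(x)}\times\{x\}))=\varphi(T^{f(x)}\times\{f(x)\})\subset N$, so $T^{f(x)}$ witnesses $f^{-1}(N)\in\varepsilon^{\r}(X)$ with $T^x=T^{f(x)}$.

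Next I would prove the adjunction $(\cdot)^{\r}\dashv (\cdot)_{\t}$ by exhibiting a natural bijection
$$\Hom_{\bf{E^{\r}F}}(X^{\r},M)\;\cong\;\Hom_{\bf F}(X,M_{\t}).$$
The map from left to right is $g\mapsto g_{\t}$ (apply the forgetful functor; a flow morphism is obtained since $g$ is). For the inverse, given a flow morphism $h\colon X\to M_{\t}$, I claim $h$ is automatically exterior as a map $X^{\r}\to M$. Indeed, let $E\in\varepsilon(M)$; I must show $h^{-1}(E)\in\varepsilon^{\r}(X)$, i.e. that $h^{-1}(E)$ is $\r$-exterior in $X$. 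Fix $x\in X$. Because $M$ is an $\r$-exterior flow, $\varphi^M\colon \R^{\r}\bar\times M_{\d}\to M$ is exterior, so $(\varphi^M)^{-1}(E)$ is an exterior open subset of $\R^{\r}\bar\times M_{\d}$; by the description of the externology of the bar-product with a discrete space, for the point $h(x)\in M_{\d}$ there is $T^{h(x)}\in\r$ with $T^{h(x)}\times\{h(x)\}\subset(\varphi^M)^{-1}(E)$, i.e. $\varphi^M(T^{h(x)}\times\{h(x)\})\subset E$. Since $h$ is a flow morphism, $h(\varphi^X(T^{h(x)}\times\{x\}))=\varphi^M(T^{h(x)}\times\{h(x)\})\subset E$, so $\varphi^X(T^{h(x)}\times\{x\})\subset h^{-1}(E)$; thus $T^{h(x)}$ witnesses that $h^{-1}(E)$ is $\r$-exterior. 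Hence $h\colon X^{\r}\to M$ is an $\r$-exterior flow morphism, and the two assignments are mutually inverse and natural in $X$ and $M$ by inspection. I expect this step — the verification that every flow morphism out of $X^{\r}$ into an $\r$-exterior flow is automatically exterior — to be the crux of the argument; everything hinges on unwinding the definition of the bar-product externology for a discrete factor and using the flow-morphism intertwining property.

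Finally, the full-subcategory claim follows formally: since $(\cdot)_{\t}\,(\cdot)^{\r}=\id$, the functor $(\cdot)^{\r}$ is faithful, and the counit/unit analysis (or directly: the adjunction bijection with $M=Y^{\r}$ gives $\Hom_{\bf{E^{\r}F}}(X^{\r},Y^{\r})\cong\Hom_{\bf F}(X,(Y^{\r})_{\t})=\Hom_{\bf F}(X,Y)$) shows $(\cdot)^{\r}$ is also full. Therefore $(\cdot)^{\r}\colon\bf F\to\bf{E^{\r}F}$ is a full embedding, identifying $\bf F$ with a full subcategory of $\bf{E^{\r}F}$.
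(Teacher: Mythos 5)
Your proof is correct: the crux is exactly the observation that for any $\r$-exterior flow $M$ the exteriority of $\varphi^M\colon \R^{\r}\bar\times M_{\d}\to M$ (unwound via the bar-product externology with a discrete factor) forces $\varepsilon(M)\subset \varepsilon^{\r}(M_{\t})$, so every flow morphism $X\to M_{\t}$ is automatically exterior as a map $X^{\r}\to M$, which gives the adjunction bijection, and the rest follows formally. The paper states this proposition without proof, and your argument supplies precisely the routine verification it leaves implicit.
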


\subsection{End Spaces and Limit Spaces of an exterior flow}

In section \ref{tres} we have defined the end and limit spaces of
an exterior space. In particular, since any $\r$-exterior flow $X$
is an exterior space, we can consider the end space
 $\F(X)$
and the limit space $L(X)$.
Notice that one has the following properties:

\begin{proposition} \label{invariantes} Suppose that $X=(X,\varphi)$ is an
$\r$-exterior flow. Then
\begin{itemize}
\item[(i)] The space  $L(X)$
is invariant.
\item[(ii)] There is a trivial  flow structure induced  on $\F(X).$
\end{itemize}
\end{proposition}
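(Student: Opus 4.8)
\textbf{Proof plan for Proposition \ref{invariantes}.}

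The plan is to exploit the fact that each $\varphi_t\colon X\to X$ is an exterior homeomorphism, which follows from the $\r$-exterior flow axioms: indeed $\varphi_t$ is a homeomorphism with inverse $\varphi_{-t}$, and exteriority of $\varphi\colon \R^{\r}\bar\times X_{\d}\to X$ restricted to the slice $\{t\}\times X_{\d}$ shows that $\varphi_t$ pulls back exterior open subsets to exterior open subsets; applying the same to $\varphi_{-t}$ gives that $\varphi_t$ is an exterior isomorphism. For part (i), I would first observe that, by the homeomorphism $L(X)=\lim\varepsilon(X)=\bigcap_{E\in\varepsilon(X)}E$ established in Section \ref{tres}, a point $p$ lies in $L(X)$ precisely when $p\in E$ for every $E\in\varepsilon(X)$. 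Fix $p\in L(X)$ and $t\in\R$; to show $t\cdot p\in L(X)$ it suffices to check $t\cdot p\in E$ for each $E\in\varepsilon(X)$. Since $\varphi_t$ is an exterior isomorphism, $\varphi_{t}^{-1}(E)=\varphi_{-t}(E)\in\varepsilon(X)$, hence $p\in\varphi_{-t}(E)$, i.e. $t\cdot p=\varphi_t(p)\in E$. As $E$ was arbitrary, $t\cdot p\in L(X)$, so $\R\cdot p\subset L(X)$; since $p$ was arbitrary this gives $\mathrm{inv}(L(X))=L(X)$.

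For part (ii), the point is that $\F(X)=\lim_{E\in\varepsilon(X)}\pi_0(E)$ carries a flow structure and that this flow is trivial, i.e. every point of $\F(X)$ is critical. By Lemma \ref{lemahomotopia}(ii), $\F$ is invariant under exterior homotopy, so $\F(\varphi_t)=\F(\varphi_0)=\mathrm{id}$ provided I can produce an exterior homotopy from $\varphi_t$ to $\varphi_0=\mathrm{id}$. This is exactly what the map $F_t\colon X\bar\times I\to X$, $F_t(x,s)=\varphi(ts,x)$ supplies: it is exterior by the $\r$-exterior flow axiom, and $F_t(x,0)=\varphi(0,x)=x$, $F_t(x,1)=\varphi(t,x)=t\cdot x$, so $F_t$ is an exterior homotopy from $\mathrm{id}$ to $\varphi_t$. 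Hence $\F(\varphi_t)=\mathrm{id}_{\F(X)}$ for every $t\in\R$. Finally one must check that the assignment $(t,a)\mapsto \F(\varphi_t)(a)=a$ is a genuine flow on $\F(X)$: the identity and additivity axioms hold since each $\F(\varphi_t)$ is the identity, and continuity of $\R\times\F(X)\to\F(X)$ is immediate because the map is the projection onto $\F(X)$. Thus the induced flow on $\F(X)$ is the trivial flow.

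The main obstacle, and the only step requiring genuine care, is justifying that each $\varphi_t$ is an exterior isomorphism from the stated axioms (in part (i)) and that $F_t$ genuinely qualifies as an exterior homotopy in the sense used by Lemma \ref{lemahomotopia} (in part (ii)) — that is, matching the definition of $\bar\times$ and checking the slice/restriction arguments against the definition of exterior map. Once that bookkeeping is in place, both assertions follow formally: part (i) from invariance of the intersection $\bigcap_E E$ under the exterior homeomorphisms $\varphi_t$, and part (ii) from homotopy-invariance of $\F$ together with the contraction $F_t$ of $\varphi_t$ to the identity. I would also remark that, strictly speaking, for part (ii) one should note that $\F$ applied to the flow map $\varphi\colon\R^{\r}\bar\times X_{\d}\to X$ is not directly what is needed; rather it is the family $\{\F(\varphi_t)\}_{t\in\R}$ that carries the structure, and triviality is the content, so no compatibility beyond $\F(\varphi_t)=\mathrm{id}$ is required.
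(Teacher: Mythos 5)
Your argument is correct and is essentially the paper's own proof: part (i) rests on the fact that each $\varphi_t$ permutes the externology, so that $\bigcap_{E\in\varepsilon(X)}E$ is preserved, and part (ii) uses the exterior homotopy $F_t$ from $\id_X$ to $\varphi_t$ together with Lemma \ref{lemahomotopia}(ii). One small repair: restricting $\varphi\colon\R^{\r}\bar\times X_{\d}\to X$ to the slice $\{t\}\times X_{\d}$ does not show that $\varphi_t$ is exterior, since the externology of $\R^{\r}\bar\times X_{\d}$ only constrains each ray $\R\times\{x\}$ (and $X_{\d}$ is discrete), so it gives no control over $\varphi_t^{-1}(E)$ as a subset of $X$ with its actual topology. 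The correct route is the one you already use in part (ii): $\varphi_t=F_t(\cdot,1)$, and since $I$ is compact, exteriority of $F_t$ yields $G\in\varepsilon(X)$ with $G\times I\subset F_t^{-1}(E)$, whence $G\subset\varphi_t^{-1}(E)$ and $\varphi_t^{-1}(E)\in\varepsilon(X)$; applying this to $\varphi_{-t}$ gives that $\varphi_t$ is an exterior isomorphism.
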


\begin{proof}(i) We have that $L(X)=\cap_{E\in \varepsilon(X)}E.$ Note that
for any $s\in \R,$ $\varphi_s(E)\in \varepsilon(X)$ if and only if
$E\in \varepsilon(X).$ Then $\varphi_s(L(X))=\varphi_s( \cap_{E\in
\varepsilon(X)}E)= \cap_{E\in
\varepsilon(X)}\varphi_s(E)=\cap_{E\in \varepsilon(X)}E=L(X).$

(ii) For any $s\in \R$, consider the exterior homotopy $F_{s}\colon
X\bar \times I \to X$, $F_{s}(x,t)= \varphi(t s,x)$, from $\id_X$ to
$\varphi_s$. By Lemma \ref{lemahomotopia}, it follows that $\id =
\F(\varphi_s)$.

\end{proof}

As a consequence of this result, one has functors
 $L, \F
 \colon {\bf E^{\r}F} \to {\bf F}.$

\begin{proposition}\label{homotopy} The functors
$L, \F
\colon {\bf E^{\r}F} \to {\bf F}$ induce functors
$$L
\colon \pi {\bf E^{\r}F} \to \pi{\bf F},\hspace{10pt}\F
\colon\pi{\bf E^{\r}F} \to {\bf F},$$ where the homotopy categories
are constructed in a canonical way.
\end{proposition}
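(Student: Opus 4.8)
The plan is to leverage Proposition \ref{homotopy} from the previous section (the one for exterior \emph{spaces}) together with Proposition \ref{invariantes}, so that the work reduces to checking that the homotopy categories $\pi{\bf E^{\r}F}$ and $\pi{\bf F}$ make sense and that the functors $L,\F$ descend to them. First I would recall that by Proposition \ref{invariantes} we already have functors $L,\F\colon {\bf E^{\r}F}\to {\bf F}$: for an $\r$-exterior flow $X$, $L(X)$ is an invariant subspace of $X_\t$ (hence a subflow), and $\F(X)$ carries the trivial flow; for an $\r$-exterior flow morphism $f$, the maps $L(f)$ and $\F(f)$ are continuous and obviously commute with the flow actions since $f$ does, so they are flow morphisms. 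So the underlying functors are in hand and only the passage to homotopy categories is at issue.

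Next I would pin down what ``homotopy'' means in ${\bf E^{\r}F}$ and ${\bf F}$ — this is the step flagged as ``constructed in a canonical way'' and is really the crux. The natural notion: two $\r$-exterior flow morphisms $f,g\colon X\to Y$ are homotopic if there is an $\r$-exterior flow morphism $H\colon X\mas I\to Y$ with $H(\cdot,0)=f$, $H(\cdot,1)=g$, where $X\mas I$ is given the product flow with trivial action on the $I$ factor and the exterior structure $X\bar\times I$; similarly for plain flows using $X\times I$ with trivial action on $I$. One checks $X\mas I$ is again an $\r$-exterior flow (the exteriority of $\varphi$ and of the maps $F_t$ is inherited from $X$ because the $I$-coordinate is acted on trivially and $X\bar\times I$ behaves well — here the compactness of $I$, noted in Section \ref{exteriorspaces}, ensures $\varepsilon(X\bar\times I)$ has the expected description). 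With this, $\pi{\bf E^{\r}F}$ and $\pi{\bf F}$ are defined by identifying homotopic morphisms, and one must verify this is a congruence (stable under composition), which follows exactly as in the topological and exterior-space cases.

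Then the proof is a two-part verification. For $L$: given an $\r$-exterior flow homotopy $H\colon X\mas I\to Y$ from $f$ to $g$, part (i) of Lemma \ref{lemahomotopia} already gives that $L(H)=H|_{L(X)\times I}\colon L(X)\times I\to L(Y)$ is a topological homotopy from $L(f)$ to $L(g)$; I just need to observe that $L(H)$ is a flow homotopy, i.e.\ a morphism $L(X)\mas I\to L(Y)$ of flows, which is immediate because $H$ commutes with the action and the action on the $I$ factor is trivial, so $L(X)\times I$ here carries the product flow. Hence $L$ sends homotopic morphisms to homotopic morphisms and descends to $L\colon \pi{\bf E^{\r}F}\to\pi{\bf F}$. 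For $\F$: part (ii) of Lemma \ref{lemahomotopia} gives $\F(f)=\F(g)$ as continuous maps whenever $f\simeq g$; since the flow structure on every $\F(\cdot)$ is trivial, equality of the underlying maps is equality of flow morphisms, so $\F$ trivially factors through $\pi{\bf E^{\r}F}\to{\bf F}$ (no homotopy on the target needed, exactly as in Proposition \ref{homotopy} for exterior spaces). Functoriality on the homotopy categories is then formal. The main obstacle is entirely the bookkeeping in the first verification step — confirming that $X\mas I$ is a genuine $\r$-exterior flow and that the induced homotopy relations form congruences — since once that framework is set up, Lemma \ref{lemahomotopia} and Proposition \ref{invariantes} do all the real work.
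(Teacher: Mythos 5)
Your proposal is correct and follows the same route the paper intends: the paper offers no explicit proof, presenting the proposition as an immediate consequence of Proposition \ref{invariantes} together with Lemma \ref{lemahomotopia}, and your argument simply fills in the details of that deduction (the canonical cylinder $X\bar\times I$ with trivial action on $I$, part (i) of the lemma for $L$, part (ii) plus triviality of the flow on $\F(\cdot)$ for $\F$). The only caveat is notational: the paper's $\mas$ denotes $\overline{\sqcup}$, so the cylinder object you describe should be written $X\bar\times I$ throughout.
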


\subsection{The end point of a trajectory and the induced decompositions of
an exterior flow}

For an $\r$-exterior flow $X$, one has that  each trajectory has an
end point given as follows: Given $p \in X$ and $E \in
\varepsilon(X),$ there is $T^p\in \r$ such that $T^p \cdot p
\subset E.$ We can suppose that $T^p$ is path-connected and
therefore so is $T^p \cdot p;$ this way there is a unique
path-component $\omega_{\r}^0(p,E)$
of $E$ such that $T^p \cdot p \subset
\omega_{\r}^0(p, E) \subset E$.
This gives maps $\omega_{\r}^0
(\cdot , E) \colon X\to \pi_0(E)$ and $\omega_{\r}^0 \colon X \to
\F(X)$
such that the following diagram
commute:

$$\xymatrix{ L(X) \ar[r]^{e_0} \ar[d] & \F(X)\\
X\ar[ur]^{\omega_{\r}^0} & }$$

These maps permit to divide a flow in simpler subflows.

\begin{definition} Let $X$ be an  $\r$-exterior flow. We will
consider $X_{(\r,a)}^0=(\omega_{\r}^0)^{-1}(a),$ $a\in \F(X)$.
The invariant space $X_{(\r,a)}^0$
will be called the
$\omega_{\r}^0$-\emph{basin at} $a\in \F(X)$.

The map $\omega_{\r}^0$
 induces the following
partition of $X$ in simpler flows
$$X=\bigsqcup_{a\in \F(X) }X_{(\r, a)}^0$$
that will be called
respectively, the $\omega_{\r}^0$-\emph{decomposition}
of the $\r$-exterior flow $X.$
\end{definition}

It is important to note that the map $\omega_{\r}^0$
needs not
be continuous.

\begin{definition} Let $X$ be an  $\r$-exterior flow. An end point $a\in \F(X)$
is said to be ${ \omega_{\r}^0 }$-\emph{representable}
if there is $p\in X$ such that $ \omega_{\r}^0 (p)=a.$
Denote by $ \omega_{\r}^0 (X)$ the space of
$\omega_{\r}^0$-representable end points.
\end{definition}

Since the
$\omega_{\r}^0$-decomposition of $X$ is compatible with the
$e_0$-de\-com\-position of the limit subspace, we have a
canonical map $e_0L(X) \to \omega_{\r}^0 (X)$.

\section{The completion of an $\r$-exterior flow}\label{completionexterior}

In this section, we use the completion functor $\Co \colon {\bf E} \to {\bf E}$ given in subsection
\ref{exteriorcompletion} to construct the completion functor  $\Co \colon {\bf E^{\r}F} \to {\bf E^{\r}F}$.

Given an $\r$-exterior flow $X=(X, \varphi)$, we can take the following diagram in the category of topological spaces ${\bf Top}$,
where the front and back faces are push-outs in {\bf Top}
$$\xymatrix{& L(X) \ar[rr]^{e_0} \ar[dd] & & \F(X) \ar[dd]^{\inc_0}\\
\R \times L(X) \ar[ur]^{\varphi|_{\R \times L(X)}} \ar[rr]^(0.6){\id \times e_0} \ar[dd] & &\R \times  \F(X)\ar[ur]^{\pr} \ar[dd] &\\
& X \ar[rr]^(0.4){p_0} && X\cup_{L(X)}\F(X)\\
\R \times X \ar[ur]^{\varphi} \ar[rr]^{\id \times p_0} &&\R \times ( X\cup_{L(X)}\F(X)) \ar@{.>}[ur]^{{\check \varphi}_{0} }& }
$$
Therefore, considering such push-out topologies, there is  an induced continuous map

$${\check \varphi}_{0}\colon\R \times ( X\cup_{L(X)}\F(X))\to X\cup_{L(X)}\F(X)$$

In order to prove that ${\check \varphi}_{0}$ is continuous with the topology $\mathcal{G}_0$
we introduce some notation and study some properties:

Given $S \subset X$ and $t\in \R$, denote by
$$S^t=\{q\in S| st \cdot q \in S, ~\, 0\leq s\leq 1\}$$
We note that if $S_1\subset S_2$, then $S_1^t \subset S_2^t$~.
\begin{lemma} Suppose that $U$ is an open in a flow $X$ and $t\in \R$. Then,
\begin{itemize}
  \item[(i)] $U^t$ is an open subset of $X$,
  \item[(ii)] if $A$ is a path-component of $U^t$, $B$ is a path-component of $U$ and
  $A    \subset B$, then $A\subset B^t$.
\end{itemize}
\end{lemma}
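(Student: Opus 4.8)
The plan is to prove both assertions directly from the definition $U^t=\{q\in S\mid st\cdot q\in S,\ 0\le s\le 1\}$ (with $S=U$), using only continuity of the flow and elementary path-lifting arguments.

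For (i), I would show $U^t$ is open by exhibiting, for each $q_0\in U^t$, an open neighbourhood of $q_0$ contained in $U^t$. The set $K=\{st\mid 0\le s\le 1\}$ is a compact subset of $\R$ (either the degenerate $\{0\}$ or an interval with endpoints $0$ and $t$), and the map $K\times X\to X$, $(r,q)\mapsto r\cdot q$, is continuous. Since $K\cdot q_0\subset U$ and $K$ is compact, by the tube lemma there is an open neighbourhood $V$ of $q_0$ with $K\cdot V\subset U$; in particular $V\subset U$ as well (take $s=0$). Then for every $q\in V$ and every $s\in[0,1]$ we have $st\cdot q\in U$, so $V\subset U^t$. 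Hence $U^t$ is open. (If $U^t=\emptyset$ there is nothing to prove.)

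For (ii), let $A$ be a path-component of $U^t$, $B$ a path-component of $U$, and suppose $A\subset B$. Fix $q\in A$; I must show $q\in B^t$, i.e. that $st\cdot q\in B$ for all $s\in[0,1]$. Since $q\in U^t$ we already know $st\cdot q\in U$ for all such $s$, so it remains to check that each $st\cdot q$ lies in the \emph{same} path-component $B$ of $U$. Consider the path $\gamma\colon[0,1]\to U$ defined by $\gamma(s)=st\cdot q$; this is continuous (restriction of the flow) and its image lies in $U$. Since $\gamma(0)=0\cdot q=q\in A\subset B$, the connected set $\gamma([0,1])$ meets $B$ and is contained in $U$, hence $\gamma([0,1])\subset B$. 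In particular $st\cdot q\in B$ for every $s\in[0,1]$, which is exactly the statement $q\in B^t$. As $q\in A$ was arbitrary, $A\subset B^t$.

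I do not anticipate a serious obstacle here; the only point requiring care is the degenerate case $t=0$ (where $U^0=U$, $S^0=S$, and both claims are trivial) and making sure the tube lemma is applied to the genuinely compact parameter set $K$ rather than to all of $\R$. The path-connectedness bookkeeping in (ii) is routine once one observes that the trajectory segment $s\mapsto st\cdot q$ is a path in $U$ joining $q$ to $st\cdot q$.
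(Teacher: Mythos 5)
Your proof is correct and follows essentially the same route as the paper: for (i) the paper carries out by hand exactly the compactness/tube-lemma argument you invoke (covering $[0,1]$ by neighbourhoods $W(s)$ and intersecting the corresponding $V(s)$), and for (ii) it likewise observes that the trajectory segment $([0,1]t)\cdot q$ is a path-connected subset of $U$ meeting $B$, hence contained in $B$. The only cosmetic point is that in (ii) you should say ``path-connected'' rather than ``connected'' when concluding containment in the path-component $B$, which is immediate since $\gamma([0,1])$ is the image of a path.
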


\begin{proof} (i) Suppose that $q\in U^t$; then for each $0\leq s \leq 1$ there are $W(s)\in(\t_{[0,1]})_s$ and
$V(s) \in (\t_X)_q$ such that $W(s)t\cdot V(s) \subset U$. Since $[0,1]$ is compact,
we can find $V\in (\t_X)_q$ such that for all  $0\leq s'\leq 1$, $s't \cdot V \subset U$. This implies that
$V\subset U^t$. Therefore $U^t$ is open.
(ii) Take $q\in A$, then $q \in  ([0,1] t)\cdot q \subset U$ and $ ([0,1] t)\cdot q$ is path-connected, then
we have that $ ([0,1] t)\cdot q \subset B$. This implies that $A\subset B^t$~.
\end{proof}

\begin{lemma}\label{bete}  Suppose that $f\colon X \to Y$ is a morphism of $\R$-sets.
If $B\subset Y$, then $f^{-1}(B^t)=(f^{-1}(B))^t.$
\end{lemma}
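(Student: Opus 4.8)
The plan is to unwind the two definitions and reduce everything to the equivariance of $f$. Recall that by definition, for a subset $S$ of an $\R$-set and $t\in\R$ we have $S^t=\{q\in S\mid st\cdot q\in S,\ 0\le s\le 1\}$. So I would first spell out membership in the left-hand side: a point $p\in X$ lies in $f^{-1}(B^t)$ exactly when $f(p)\in B^t$, i.e. when $f(p)\in B$ and $st\cdot f(p)\in B$ for every $s$ with $0\le s\le 1$.

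Next I would bring in the hypothesis that $f$ is a morphism of $\R$-sets, that is, $f(r\cdot q)=r\cdot f(q)$ for all $r\in\R$ and all $q\in X$. Applying this with $r=st$ gives $st\cdot f(p)=f(st\cdot p)$, so the condition $st\cdot f(p)\in B$ is equivalent to $f(st\cdot p)\in B$, i.e. to $st\cdot p\in f^{-1}(B)$. Likewise $f(p)\in B$ is equivalent to $p\in f^{-1}(B)$. Substituting these equivalences into the description from the previous paragraph, $p\in f^{-1}(B^t)$ holds if and only if $p\in f^{-1}(B)$ and $st\cdot p\in f^{-1}(B)$ for all $0\le s\le 1$, which is precisely the statement that $p\in (f^{-1}(B))^t$. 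Since this chain of equivalences holds for every $p\in X$, the two sets coincide.

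I do not expect any genuine obstacle: the identity is a purely formal consequence of equivariance, and the only point worth a remark is that the clause ``$q\in S$'' in the definition of $S^t$ is already contained in ``$st\cdot q\in S$ for $0\le s\le 1$'' via the case $s=0$ (using $0\cdot q=q$), so no separate bookkeeping is needed. I would also note explicitly that the lemma uses only the algebraic $\R$-action and not any topology, which is why it is phrased for $\R$-sets; in the sequel it will be invoked with $f$ an inclusion of invariant subsets and with $f=\varphi_s$-type maps in the course of verifying that $\check\varphi_0$ is continuous for the topology $\mathcal{G}_0$.
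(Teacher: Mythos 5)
Your argument is correct and is exactly the paper's proof: the paper's entire justification is the single observation that $f(st\cdot x)=st\cdot f(x)$, which is the equivariance step you spell out in detail. No further comment is needed.
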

\begin{proof}
Just take into account that $f(st\cdot x)=st\cdot f(x).$
\end{proof}

\begin{proposition}  Let $X=(X, \varphi)$  be an $\r$-exterior flow. Then,
${\check \varphi}_{0} $ induces on $\Co(X)$ an $\r$-exterior flow structure; that is:
\begin{itemize}
\item[\rm (i)] the map ${\check \varphi}_{0} \colon \R \times \Co(X) \to \Co(X)$ is continuous (with the topology $\mathcal{G}_0$)
\item[\rm (ii)] the map ${\check \varphi}_{0}  \colon \R^{\r} \bar \times \Co(X)_{\d} \to \Co(X)$ is exterior, and
\item[\rm (iii)]    for any $t\in \R$, $F_t\colon \Co (X)\bar \times I \to \Co (X)$,
$F_t(x,s)={\check \varphi}_{0}  (ts,x)$, $s\in I,~x \in \Co (X)$, is
also exterior.
\end{itemize}
\end{proposition}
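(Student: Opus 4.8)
The plan is to verify the three properties in turn, reducing each to statements already established for the completion functor $\Co \colon {\bf E} \to {\bf E}$ together with the elementary properties of the sets $S^t$. The map ${\check\varphi}_0$ satisfies the flow identities (i) and (ii) of the definition of a dynamical system automatically, since $p_0 \colon X \to \Co(X)$ and $\inc_0 \colon \F(X) \to \Co(X)$ are jointly surjective, these identities hold for $\varphi$ on $X$, and $\F(X)$ carries the trivial flow (Proposition \ref{invariantes}(ii)); so the only content is continuity with respect to the coarser topology $\mathcal{G}_0$, exteriority of ${\check\varphi}_0$ as a map $\R^{\r}\bar\times\Co(X)_{\d}\to\Co(X)$, and exteriority of each $F_t$.

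\emph{Continuity (i).} We already know ${\check\varphi}_0$ is continuous for the push-out topology; the issue is only the points of $\inc_0(\F(X))$, since $\mathcal{G}_0$ differs from the push-out topology exactly there. So take $(t,a)\in\R\times\F(X)$ with ${\check\varphi}_0(t,\inc_0(a))=\inc_0(a)$ (using that $\F(X)$ has the trivial flow) and a basic neighbourhood $W\in\mathcal{G}_0$ of $\inc_0(a)$; by definition there are $E\in\varepsilon(X)$ and $G$ open in $\pi_0(E)$ with $a\in\eta_0^{-1}(G)\subset\inc_0^{-1}(W)$ and $q_0^{-1}(G)\subset p_0^{-1}(W)$. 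Put $V=q_0^{-1}(G)\subset E$, a $q_0$-saturated open set, so that $W_0(V)\subset W$ is a $\mathcal{G}_0$-neighbourhood of $\inc_0(a)$. Now I would use the sets $V^t$: for a suitable relatively compact interval $J$ around $t$ one has $\bigcap_{s\in J}V^{s}$ still $q_0$-saturated and open (by the first Lemma above and Lemma \ref{bete} applied to the $\varphi_s$, which are $\R$-set morphisms), and $\inc_0(a)$ lies in the corresponding $W_0$-set because $\varphi$ is exterior on $\R^{\r}\bar\times X_{\d}$ forces the tails of trajectories into $V$. Then $J\times W_0\bigl(\bigcap_{s\in J}V^{s}\bigr)$ maps into $W_0(V)\subset W$ under ${\check\varphi}_0$, which gives continuity at $(t,\inc_0(a))$. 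The honest bookkeeping here — choosing $J$ and $E$ compatibly and checking the saturation is preserved — is the part that needs care.

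\emph{Exteriority of ${\check\varphi}_0$ on $\R^{\r}\bar\times\Co(X)_{\d}$ (ii).} The externology of $\Co(X)$ has the base $\{W_0(E)\mid E\in\varepsilon(X)\}$, and by the construction of the bar-product with a discrete space it suffices to show that for each $E\in\varepsilon(X)$ and each $x\in\Co(X)$ there is $T\in\r$ with $T\cdot x\subset {\check\varphi}_0^{-1}(W_0(E))$, i.e. $T\cdot x$ landing in $W_0(E)$. For $x=p_0(\tilde x)$ this is exactly the statement that $\varphi$ is exterior on $\R^{\r}\bar\times X_{\d}$, namely that there is $T^{\tilde x}\in\r$ with $T^{\tilde x}\cdot\tilde x\subset E$; for $x=\inc_0(a)$ it is trivial since ${\check\varphi}_0$ fixes $\inc_0(a)\in W_0(E)$ and we may take $T=(0,\infty)$ (or any tail). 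So (ii) follows directly from exteriority of $\varphi$ on $X^{\r}$ together with the computation $p_0^{-1}(W_0(E))=E$.

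\emph{Exteriority of $F_t$ (iii).} Here $F_t\colon\Co(X)\bar\times I\to\Co(X)$, $F_t(x,s)={\check\varphi}_0(ts,x)$, and I would again check $F_t^{-1}(W_0(E))$ is exterior in $\Co(X)\bar\times I$ for each $E\in\varepsilon(X)$. Fix $E$; for each $s_0\in I$ and each $x$ one wants a neighbourhood of $(x,s_0)$ of bar-product type mapping into $W_0(E)$. At a point $p_0(\tilde x)$ this reduces, via $p_0^{-1}(W_0(E))=E$ and Lemma \ref{bete}, to the exteriority of $F_t$ for the flow $X^{\r}$ (which holds by the definition of the $\r$-exterior flow $X^{\r}$) after replacing $E$ by the open set $\bigcap_{|t s - t s_0|\le\varepsilon}E^{\cdots}$ built from the $S^t$-construction and using compactness of $I$; at a point $\inc_0(a)$ it is again immediate because $\inc_0(a)$ is a fixed point of all ${\check\varphi}_0(r,-)$ and lies in $W_0(E)$, so a full bar-neighbourhood $W_0(E')\bar\times I$ with $E'\subset E$ works. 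Assembling the local pieces and intersecting the finitely many exterior sets produced by the compactness argument gives an exterior open subset of $\Co(X)\bar\times I$ inside $F_t^{-1}(W_0(E))$, completing (iii).

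\emph{Main obstacle.} The one genuinely delicate point is (i): verifying that the coarser topology $\mathcal{G}_0$ — which was introduced precisely so that neighbourhoods of an end contain a representative path-component — is preserved under the flow. Everything reduces to showing that the $q_0$-saturated open set $V$ can be shrunk to a $q_0$-saturated open set that is also "$t$-stable" in the sense of the sets $V^t$, and that the end $a$ still has its trajectory-tails inside it; this is where the two lemmas on $S^t$ and Lemma \ref{bete} do the real work, and where the interplay between path-components, the externology, and the flow has to be handled simultaneously.
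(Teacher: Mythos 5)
Your parts (ii) and (iii) are fine: the paper itself dismisses them as routine checking, and your reductions to the exteriority of $\varphi$ on $\R^{\r}\bar\times X_{\d}$ and of $F_t$ on $X^{\r}$, via $p_0^{-1}(W_0(E))=E$ and the sets $E^t$, are essentially the intended ones. The gap is in (i). You verify continuity only at points of $\R\times \incl_0(\F(X))$, on the grounds that ``$\mathcal{G}_0$ differs from the push-out topology exactly there,'' so that push-out continuity settles the remaining points. That reduction is not justified, for two reasons. First, replacing the push-out topology by the coarser $\mathcal{G}_0$ coarsens the \emph{domain} $\R\times\Co(X)$ as well as the codomain, and coarsening the domain works against continuity, not for it. Second, and more concretely, the $\mathcal{G}_0$-neighbourhood filter at a point $p_0(\tilde x)$ with $\tilde x\notin L(X)$ need not coincide with the push-out one: nothing here forces $L(X)$ to be closed, and if $\tilde x\in \overline{L(X)}\setminus L(X)$ then every open subset of $\Co(X)$ containing $p_0(\tilde x)$ meets $p_0(L(X))\subset\incl_0(\F(X))$ and is therefore subject to condition (ii) in the definition of $\mathcal{G}_0$ at those end points; a push-out-open neighbourhood of $p_0(\tilde x)$ need not contain a $\mathcal{G}_0$-open one. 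So continuity at $(t,p_0(\tilde x))$ genuinely has to be proved, and your argument does not do it.

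The paper closes exactly this point with a device you could adopt. It first reduces to continuity on $\{0\}\times\Co(X)$, using that each $(\check\varphi_0)_t=\Co(\varphi_t)$ is continuous by Proposition \ref{functor} and that an action whose time-$t$ maps are continuous and which is continuous on $\{0\}\times Y$ is continuous. It then shows that for every $W\in\mathcal{G}_0$ and every $t$ the set $W^t$ is again in $\mathcal{G}_0$ --- this is where $E^t$ and $(\eta_E^{E^t})^{-1}(G)$ enter, exactly the computation you sketch at the ends --- so that $W^{\varepsilon}\cap W^{-\varepsilon}$ is a $\mathcal{G}_0$-neighbourhood of \emph{any} $y\in W$ with $[-\varepsilon,\varepsilon]\cdot(W^{\varepsilon}\cap W^{-\varepsilon})\subset W$. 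At $y=p_0(\tilde x)$ membership $y\in W^{\varepsilon}\cap W^{-\varepsilon}$ comes from ordinary continuity of $\varphi$ at $(0,\tilde x)$; at $y\in\incl_0(\F(X))$ it holds for every $\varepsilon$. Your $\bigcap_{s\in J}V^{s}$ construction at the ends is a workable variant (it equals $V^{c}\cap V^{d}$ for the endpoints of $[0,1]\cdot J$, hence is open and saturated by the two lemmas), but it must be accompanied by an honest argument at the non-end points rather than an appeal to push-out continuity.
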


\begin{proof} (i) Firstly, recall  that if $f\colon X\to Y$ is an exterior map,
by Proposition \ref{functor},
$\Co(f)$ is a continuous map (in fact is an exterior map).
In particular, given  $X=(X, \varphi)$  an $\r$-exterior flow,  since $\varphi_t$ is an exterior map and
$ ({\check \varphi}_{0})_t=\Co(\varphi_t)$,  one has that $ ({\check \varphi}_{0})_t$ is a continuous map.

We observe  that if  $Y$ is  a topological space and there is  a set action
$\phi \colon \R \times Y \to Y$ such that each $\phi_t$ is continuous and $\phi$ is continuous on
$\{0\} \times Y$, then $\phi$  is continuous. Therefore, in our  case,
it suffices to check that the action ${\check \varphi}_{0}\colon \R \times \Co (X) \to \Co (X)$ is continuous on
$\{0\} \times \Co (X)$; that is, if $0\cdot y = y \in W$ with $W$ an open subset of  $\Co (X) $, then there exist
$\delta>0$ and an open $W'$ containing $y$ such that $[-\delta, \delta]\cdot W'\subset W$.

Before proving the continuity we analyze some previous properties:

Suppose that $W\subset \Co (X)$ is open and take $t\in \R$.
By Lemma \ref{bete}, $ (p_0^{-1}(W))^{t}=p_0^{-1}(W^{t})$.
We also have that $\incl_0^{-1}(W^{t})=(\incl_0^{-1}(W))^{t}=\incl_0^{-1}(W)$ since the action on $\F(X)$ is trivial.

We know that $\incl_0^{-1}(W)$ is open in $\F(X)$ and for each
$a \in \incl_0^{-1}(W^{t})= \incl_0^{-1}(W)$  there is $E\in \varepsilon(X)$ and $G$ open in $\pi_0(E)$ such that
$a\in \eta_0^{-1}(G)\subset \incl_0^{-1}(W)$,
$q_0^{-1}(G) \subset p_0^{-1}(W)$.  Taking $E^{t}$ and $(\eta_E^{E^t})^{-1}G$, one has that
$a\in \eta_0^{-1}((\eta_E^{E^t})^{-1}G)\subset \incl_0^{-1}(W^t)$, $q_0^{-1}((\eta_E^{E^t})^{-1}G) \subset (p_0^{-1}(W))^{t}$. This implies that $W^t$ is also open. Now, taking the definition of $W^t$, one has that for any $t>0$
$[-t,t]\cdot (W^t\cap W^{-t}) \subset W$.

Now to prove that ${\check \varphi}_{0}$ is continuous at $(0,y)$ we distinguish two cases:

a) Suppose that $y \in W\subset \Co (X)$ with $y=p_0(x)$ and
$x\in p_0^{-1}(W)$. Then, there are $\varepsilon >0$ and $U \in (\t_X)_x$ such that
$[-\varepsilon, \varepsilon]\cdot U \subset p_0^{-1}(W)$.
This implies that $y\in W^{\varepsilon}\cap W^{-\epsilon} $ and therefore
$[-\varepsilon,\varepsilon]\cdot (W^{\varepsilon}\cap W^{-\varepsilon}) \subset W$.

b) Otherwise,  $y\not \in p_0(X)$, so $y\in \incl_0(\F(X))$ ($y$ is not $e_0$-representable). In this case we have that  for any $t>0$, $y \in W^t$  and $[-t,t]\cdot (W^t\cap W^{-t}) \subset W$.

The proof of (ii) and (iii) is a routine checking.

\end{proof}

\begin{definition} Given  an $\r$-exterior flow $X=(X, \varphi)$, the $\r$-exterior flow $\Co(X)=(\Co(X), {\check \varphi}_{0}  )$ is called the \emph{$\Co$-completion} of $X$.
\end{definition}

Taking into account that for all $E\in \varepsilon(X)$,
$p_0(E) \subset W_0(E)$ the following result holds.

\begin{proposition} The construction $\Co \colon {\bf E^{\r}F}\to {\bf E^{\r}F}$ is a functor and the
canonical map $p_0: X\to \Co(X) $ induces a natural transformation.

\end{proposition}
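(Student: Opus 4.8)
The plan is to verify the two claims separately: functoriality of $\Co\colon {\bf E^{\r}F}\to {\bf E^{\r}F}$, and naturality of $p_0$ as a transformation of $\r$-exterior flow morphisms. Most of the topological content has already been established: Proposition \ref{functor} gives that $\Co$ is a functor ${\bf E}\to {\bf E}$ together with the natural transformation $p_0\colon \id_{\bf E}\to \Co$, and the preceding Proposition shows that for an $\r$-exterior flow $X=(X,\varphi)$ the completion $\Co(X)$ carries the induced $\r$-exterior flow structure $(\Co(X),{\check\varphi}_0)$. So the only genuinely new thing to check is that $\Co$ sends $\r$-exterior flow morphisms to $\r$-exterior flow morphisms, i.e. that $\Co(f)$ is compatible with the flow actions whenever $f$ is.

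First I would take an $\r$-exterior flow morphism $f\colon X\to Y$; by definition it is an exterior map and a flow morphism, so $f(r\cdot p)=r\cdot f(p)$ for all $r\in\R$, $p\in X$. From Proposition \ref{functor} we already know $\Co(f)\colon \Co(X)\to\Co(Y)$ is exterior (hence in particular continuous for the $\mathcal{G}_0$ topologies). It remains to see that $\Co(f)$ commutes with ${\check\varphi}_0$ and ${\check\psi}_0$. The clean way is to observe that ${\check\varphi}_0$ and ${\check\psi}_0$ are, by their very construction via the push-out, the unique continuous maps fitting into the respective push-out diagrams, and that $\Co(f)$ is induced on push-outs by $f$ on $X$ (restricting to $\F(f)$ on $\F(X)$ and $L(f)$ on $L(X)$). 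Concretely, on the dense piece $p_0(X)\subset \Co(X)$ one has ${\check\varphi}_0(r,p_0(x))=p_0(\varphi(r,x))$ and $\Co(f)(p_0(x))=p_0(f(x))$, so
$$\Co(f)({\check\varphi}_0(r,p_0(x)))=p_0(f(\varphi(r,x)))=p_0(\psi(r,f(x)))={\check\psi}_0(r,\Co(f)(p_0(x))),$$
using that $f$ is a flow morphism; on the remaining piece $\incl_0(\F(X))$ the action ${\check\varphi}_0$ is trivial (as is ${\check\psi}_0$ on $\incl_0(\F(Y))$), $\Co(f)$ restricts to $\F(f)\colon \F(X)\to\F(Y)$, and triviality makes the identity automatic. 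Since $\R\times\Co(X)=\R\times(p_0(X)\cup\incl_0(\F(X)))$, this establishes $\Co(f)\circ{\check\varphi}_0={\check\psi}_0\circ(\id\times\Co(f))$, so $\Co(f)$ is an $\r$-exterior flow morphism; functoriality (preservation of identities and composition) is then inherited from functoriality of $\Co$ on ${\bf E}$.

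For the naturality of $p_0$, I would note that $p_0\colon X\to \Co(X)$ is already known to be an exterior map, so it suffices to check it is a flow morphism and that the square with $\Co(f)$ commutes. The flow-morphism property is exactly the identity ${\check\varphi}_0(r,p_0(x))=p_0(\varphi(r,x))$ used above, which holds by construction of ${\check\varphi}_0$ from the push-out square (the map $\id\times p_0$ composed with ${\check\varphi}_0$ equals $p_0$ composed with $\varphi$). Commutativity of $\Co(f)\circ p_0^X=p_0^Y\circ f$ is already contained in Proposition \ref{functor}. Putting these together gives the stated proposition. The main obstacle — really the only subtle point — is making sure the verification of ${\check\varphi}_0$-compatibility covers the non-$e_0$-representable end points correctly; but since the completed action is trivial there, this reduces to the already-established identity $\Co(f)|_{\incl_0(\F(X))}=\incl_0\circ\F(f)\circ\incl_0^{-1}$, so there is no real difficulty, only bookkeeping across the two pieces of the push-out.
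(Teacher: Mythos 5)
Your proposal is correct and follows essentially the same route the paper intends: the paper gives only a one-line justification, deferring everything to Proposition \ref{functor} and the preceding proposition on the induced flow structure ${\check\varphi}_0$, and your argument simply spells out the omitted verification that $\Co(f)$ intertwines the completed actions (pointwise on the two pieces $p_0(X)$ and $\incl_0(\F(X))$ of the push-out, where the action on the ends is trivial) and that $p_0$ is a flow morphism. The only cosmetic slip is calling $p_0(X)$ a ``dense piece''; density is neither needed nor claimed anywhere --- what you actually use, correctly, is that $\Co(X)=p_0(X)\cup\incl_0(\F(X))$ as a set.
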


Notice that $\R$ with the usual order is a directed set. Then, if
$X$ is a flow,  $\varphi^x \colon \R \to X$ is a net of the space $X$, for each $x\in X$.

\begin{theorem}\label{redes} Let $X\in {\bf E^{\r}F} $ be an $\r$-exterior flow. Then,
\begin{itemize}
  \item[\rm (i)]  For any $x  \in X$, $\varphi^x$ is a $\pi_0$-$\varepsilon(X)$-net.
  \item[\rm (ii)]   For any $x  \in X$, $ p_{0}\varphi^x\to \omega_r^0(x)$ in $\Co(X)$ ($\omega_r^0(x)$ is identified to  $\incl_{0}(\omega_r^0(x))$).
  \item[\rm (iii)]  If $X$ is locally path-connected  and  $p_{0}\varphi^x\to a$, $p_{0}\varphi^x\to b$, $a,b \in \F(X)\subset \Co(X)$, then $a=b$.
\end{itemize}
\end{theorem}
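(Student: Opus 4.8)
The plan is to prove the three statements in order, leaning heavily on the results already established for exterior spaces in Section~\ref{completionexsp}, especially the fact that $\omega_{\r}^0$ assigns to each trajectory a well-defined path-component of every exterior open subset, together with the last Proposition of Section~\ref{completionexsp} describing the convergence of $\pi_0$-$\varepsilon(X)$-nets in the completion.

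For (i): fix $x\in X$ and let $E\in\varepsilon(X)$. By the very construction of the end point of a trajectory, there is $T^x\in\r$ (which we may take path-connected, so $T^x = (n,+\infty)$ for some $n$) with $T^x\cdot x\subset\omega_{\r}^0(x,E)$, the distinguished path-component of $E$. Taking $\delta_0$ to be any index with $\varphi^x(\delta_0) = \delta_0\cdot x$ lying in $T^x\cdot x$ — concretely any $\delta_0 > n$ — we get $\varphi^x(\delta)\in\omega_{\r}^0(x,E)$ for all $\delta\geq\delta_0$. Since $\R$ with its usual order is directed, this is exactly the defining property of a $\pi_0$-$\varepsilon(X)$-net, with path-component $C_E = \omega_{\r}^0(x,E)$.

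For (ii): by (i), $\varphi^x$ is a $\pi_0$-$\varepsilon(X)$-net, and the path-components $C_E = \omega_{\r}^0(x,E)$ are coherent in $E\subset E'$ (giving $C_E\subset C_{E'}$), so they assemble to the end point $a = (\omega_{\r}^0(x,E))_{E} = \omega_{\r}^0(x)\in\F(X)$. Now apply part (i) of the Proposition preceding Theorem~\ref{condcompact} (the one asserting that any $\pi_0$-$\varepsilon(X)$-net $x_\delta$ satisfies $p_0(x_\delta)\to x$ for a suitable $x\in L(\Co(X))$): tracing its proof, the limit point produced is precisely $\incl_0$ of the end point determined by the coherent family of path-components, i.e.\ $\incl_0(\omega_{\r}^0(x))$. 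Hence $p_0\varphi^x\to\incl_0(\omega_{\r}^0(x))$ in $\Co(X)$, which under the stated identification is $p_0\varphi^x\to\omega_{\r}^0(x)$.

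For (iii): assume $X$ is locally path-connected and $p_0\varphi^x\to a$ and $p_0\varphi^x\to b$ with $a,b\in\incl_0(\F(X))\subset\Co(X)$. These limit points lie in $L(\Co(X))=\incl_0(\F(X))$. If $a\neq b$, then by part (i) of Theorem~\ref{condHausdorff} (local path-connectedness of $X$ gives that distinct points of $L(\Co(X))$ are separated by disjoint open subsets $W_0(C)$, $W_0(C')$ of $\Co(X)$), the net $p_0\varphi^x$ would have to be eventually in two disjoint open sets, which is impossible. Therefore $a=b$. I expect the only genuine subtlety here to be bookkeeping: making sure the limit point extracted in part~(i) of the convergence Proposition is identified with $\incl_0(\omega_{\r}^0(x))$ rather than merely \emph{some} element of $L(\Co(X))$, which is why step (ii) should explicitly re-examine that construction rather than cite it as a black box; everything else is a direct appeal to the separation result of Theorem~\ref{condHausdorff}.
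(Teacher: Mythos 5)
Your proposal is correct and follows essentially the same route as the paper: part (i) uses exteriority of $\varphi$ restricted to the trajectory of $x$ plus path-connectedness of $[n_E,\infty)$ to land in a single path-component, part (ii) verifies convergence to $\incl_0(\omega_{\r}^0(x))$ via the basic $\mathcal{G}_0$-neighbourhoods (the paper inlines the argument you cite from the Proposition on $\pi_0$-$\varepsilon(X)$-nets, and your remark about tracing that proof to identify the limit as $\incl_0$ of the coherent family of path-components is exactly the right bookkeeping), and part (iii) invokes the separation statement of Theorem~\ref{condHausdorff}(i) just as the paper does.
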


\begin{proof}(i)  Given $E\in \varepsilon(X)$, since $\varphi^x$ is exterior, there is $n_E\in \R$ such that
$\varphi^x([n_E, \infty))\subset E$. Since $[n_E, \infty)$ is path-connected, there is a path-component $C_E$ of $E$
such that $\varphi^x([n_E, \infty))\subset C_E$. This implies that $\varphi^x$ is a $\pi_0$-$\varepsilon(X)$-net.

(ii) Notice that $\omega_r^0(x)=(C_E)_{E\in\varepsilon(X)}$.    If $\incl_0 (\omega_r^0(x)) \in W$ and
$W$ is open, there is $E\in \varepsilon(X)$ and $G$ open in $\pi_0(E)$ such that
$q_0^{-1}(G)\subset p_0^{-1}(W)$ and $\omega_r^0(x) \in \eta_{0,E}^{-1}(G)\subset \incl_0^{-1}(W)$.
Then $\varphi^x([n_E, \infty))\subset C_E\subset q_0^{-1}(G) \subset p_0^{-1}(W)$ and therefore $p_0\varphi^x([n_E, \infty))\subset W$.

(iii) Since $X$ is locally path-connected, by Theorem \ref{condHausdorff} we have that $\F(X)$ is a Hausdorff space. This implies that
$p_0\varphi^x$ has a unique limit within $\incl_0(\F(X))=L(\Co(X))$.

\end{proof}

If we apply the theorem above to a $\Co$-complete  $\r$-exterior flow, we obtain the next result.

\begin{theorem} \label{limit} Let $X\in {\bf E^{\r}F} $ be a $\Co$-complete  $\r$-exterior flow. Then,
\begin{itemize}
  \item[\rm (i)]   For any $x  \in X$, $\varphi^x\to e_{0}^{-1}\omega_{\r}^0(x)$ and therefore $e_{0}^{-1}\omega_{\r}^0(x)
  \in \omega^{\r}(x)$.
    \item[\rm  (ii)]  If $X$ is Hausdorff, then $\omega^{\r}(x)=\{e_{0}^{-1}\omega_{\r}^0(x)\}$ and $C(X)=P(X)=\overline{\Omega^{\r}(X)}.$
\end{itemize}
\end{theorem}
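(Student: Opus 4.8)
The plan is to exploit Theorem \ref{redes} together with the $\Co$-completeness hypothesis, which by Theorem \ref{bijectioncero} tells us that $p_0\colon X\to \Co(X)$ is an isomorphism of exterior flows and in particular $e_0\colon L(X)\to \F(X)$ is a bijection; hence $e_0^{-1}\colon \F(X)\to L(X)$ is defined. First I would prove (i). By Theorem \ref{redes}(i) the motion $\varphi^x$ is a $\pi_0$-$\varepsilon(X)$-net, and by Theorem \ref{redes}(ii) we have $p_0\varphi^x\to \incl_0(\omega_{\r}^0(x))$ in $\Co(X)$. Since $p_0$ is a homeomorphism onto $\Co(X)$ and carries the point $e_0^{-1}\omega_{\r}^0(x)\in L(X)\subset X$ to $\incl_0(e_0(e_0^{-1}\omega_{\r}^0(x)))=\incl_0(\omega_{\r}^0(x))$ (using the commuting triangle relating $e_0$, $\omega_{\r}^0$ and the inclusion $L(X)\to X$), transporting the convergence back along $p_0^{-1}$ gives $\varphi^x\to e_0^{-1}\omega_{\r}^0(x)$ in $X$. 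Because $\varphi^x$ is the net $t\mapsto t\cdot x$ over the directed set $\R$ with its usual order, convergence of this net to a point $q$ means exactly $q\in\omega^{\r}(x)$; so $e_0^{-1}\omega_{\r}^0(x)\in\omega^{\r}(x)$, in particular $\omega^{\r}(x)\neq\emptyset$, and $e_0^{-1}\omega_{\r}^0(x)\in L(X)$.

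For (ii), assume in addition that $X$ is Hausdorff. In a Hausdorff space a net has at most one limit, so by (i) the only possible limit of $\varphi^x$ is $e_0^{-1}\omega_{\r}^0(x)$; hence $\omega^{\r}(x)=\{e_0^{-1}\omega_{\r}^0(x)\}$, a single point of $L(X)$. Consequently $\Omega^{\r}(X)=\bigcup_{x}\omega^{\r}(x)\subseteq L(X)$, and since $L(X)$ is invariant (Proposition \ref{invariantes}) and, for a $\Co$-complete $X$, equals $L(\Co(X))\cong\F(X)$, one expects $\overline{\Omega^{\r}(X)}\subseteq L(X)$ as well; I would check that $L(X)$ is closed, either directly from the $\Co$-completeness characterization or by noting $L(X)=\bigcap_{E}\overline E\cap(\bigcap_E E)$-type arguments and the Hausdorff hypothesis, so that the closure stays inside $L(X)$. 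The chain of inclusions $C(X)\subseteq P(X)\subseteq P^{\r}(X)\subseteq\Omega^{\r}(X)$ is already recorded in Section \ref{prelidin}, so it remains to prove the reverse inclusion $\overline{\Omega^{\r}(X)}\subseteq C(X)$, i.e.\ that every point $q\in L(X)$ is critical: this should follow because $L(X)$ is invariant and each exterior homotopy $F_s$ induces the identity on $\F(X)\cong L(X)$ (as in the proof of Proposition \ref{invariantes}(ii)), forcing $\varphi_s$ to fix every point of $L(X)$; closing up, $\overline{\Omega^{\r}(X)}\subseteq\overline{C(X)}=C(X)$ since $C(X)$ is closed in a Hausdorff flow. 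Combining, $C(X)=P(X)=\overline{\Omega^{\r}(X)}$.

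The step I expect to be the main obstacle is pinning down why every point of $L(X)$ is critical, i.e.\ the argument that $\varphi_s|_{L(X)}=\id$. The naive appeal to $\F(\varphi_s)=\id$ only controls the end space, and $e_0\colon L(X)\to\F(X)$ being a bijection must be used carefully together with the continuity of $e_0$ and the explicit form of $\omega_{\r}^0$ to conclude that the homeomorphism $\varphi_s$ of $X$ restricts to the identity on $L(X)$; alternatively one shows directly that for $q\in L(X)$ the constant net equals $\varphi^q$ up to the identification, so $q=e_0^{-1}\omega_{\r}^0(q)=\omega^{\r}(q)$'s unique point and $s\cdot q\in\omega^{\r}(q)=\{q\}$ for all $s$. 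I would also need to be slightly careful that $e_0^{-1}$ is a genuine inverse (not merely a set-theoretic one) and that the identification of $\incl_0(\omega_{\r}^0(x))$ inside $\Co(X)$ with the corresponding point of $L(\Co(X))$ is the one transported by the isomorphism $p_0$; these are bookkeeping points but they are where a careless proof could go wrong.
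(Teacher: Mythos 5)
Your proposal is correct and follows essentially the same route as the paper: part (i) is exactly the transport of Theorem \ref{redes}(ii) back along the isomorphism $p_0$, and part (ii) combines Hausdorff uniqueness of net limits with the chain $C(X)\subset P(X)\subset \Omega^{\r}(X)$ and the closedness of $C(X)$ in a Hausdorff flow. The step you flag as the main obstacle is settled precisely by your own ``alternative'' argument: since $\omega^{\r}(x)$ is an invariant singleton, its unique point is critical, which is exactly how the paper obtains $\Omega^{\r}(X)\subset C(X)$ --- no separate verification that $L(X)$ is closed, or that $\varphi_s|_{L(X)}=\id$ via the end space, is required.
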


\begin{proof}(i) By Theorem \ref{redes}, $p_0\varphi^x \to \incl_{0}(\omega_r^0(x))$, so
$\varphi^x \to p_0^{-1} \incl_{0}(\omega_r^0(x))=e_{0}^{-1}(\omega_r^0(x))$.

(ii) Using the convergence properties of nets and subnets and the fact that $X$ is Hausdorff, one has
$\omega^{\r}(x)=\{e_{0}^{-1}\omega_{\r}^0(x)\}$.
Since $\omega^{\r}(x)$ is invariant, it follows that $\Omega^{\r}(X) \subset C(X)$. Obviously,
one has that $ C(X)\subset P(X) \subset  \Omega^{\r}(X) $.
Therefore, it follows that $ C(X)=P(X)=\Omega^{\r}(X) $. Taking into account that  set of
critical points in a Hausdorff flow is closed, it is not difficult to check that $C(X)=P(X)=\overline{\Omega^{\r}(X)}.$

\end{proof}

Taking into account the definition of global weak attractor and global attractor, see Definition \ref{attractoruno}, the following is obtained:

\begin{corollary}\label{attractor} Let $X\in {\bf E^{\r}F} $ be a $\Co$-complete  $\r$-exterior flow. Then,
\begin{itemize}
  \item[\rm (i)]   $L(X)$  is a  global weak attractor of $X$ and $L(X)=C(X)$.
    \item[\rm (ii)]  If $X$ is Hausdorff,  then $L(X)$  is the unique  minimal global attractor of $X$ and $L(X)=\overline{\Omega^{\r}(X)}$.

\end{itemize}
\end{corollary}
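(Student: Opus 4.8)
The plan is to deduce Corollary \ref{attractor} directly from Theorem \ref{limit} together with the definitions of (weak) global attractor in Definition \ref{attractoruno}. First I would recall that by Corollary \ref{attractor} (i) we must produce two facts: that $L(X)=C(X)$ and that $L(X)$ is a global weak attractor. For the first, I would invoke Theorem \ref{limit} in the form already available: for a $\Co$-complete $\r$-exterior flow the canonical map $e_0\colon L(X)\to\F(X)$ is a bijection, so every $x\in X$ has a well-defined point $e_0^{-1}\omega_{\r}^0(x)\in L(X)$, and by Theorem \ref{redes}(ii) combined with $\Co$-completeness (which lets us pull the convergence $p_0\varphi^x\to\incl_0(\omega_r^0(x))$ back along the homeomorphism $p_0$) we get $\varphi^x\to e_0^{-1}\omega_{\r}^0(x)$; since $L(X)$ is invariant (Proposition \ref{invariantes}(i)) and carries the trivial externology on $\F(X)$, the limit point is critical, giving $L(X)\subseteq C(X)$; the reverse inclusion $C(X)\subseteq L(X)$ holds because any critical point lies in every $\r$-exterior open set (its whole orbit is a single point which must eventually enter each $E\in\varepsilon(X)$), hence in $\bigcap_{E}E=L(X)$.

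Next, for the weak-attractor claim, I would observe that Theorem \ref{limit}(i) already states $e_0^{-1}\omega_{\r}^0(x)\in\omega^{\r}(x)$ for every $x\in X$, and since $e_0^{-1}\omega_{\r}^0(x)\in L(X)$ this says precisely that $\omega^{\r}(x)\cap L(X)\neq\emptyset$ for all $x\in X$; in the language of Definition \ref{attractoruno} this is the statement that $A_{\omega^{\r}}(L(X))=X$, i.e. $L(X)$ is a global weak attractor. This completes (i).

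For part (ii), assuming in addition that $X$ is Hausdorff, Theorem \ref{limit}(ii) gives $\omega^{\r}(x)=\{e_0^{-1}\omega_{\r}^0(x)\}$, a single point of $L(X)$, and also $C(X)=P(X)=\overline{\Omega^{\r}(X)}$. Since $\omega^{\r}(x)$ is nonempty and contained in $L(X)$ for every $x$, Definition \ref{attractoruno} gives $A_{\r}(L(X))=X$, so $L(X)$ is a global attractor; minimality follows because $\Omega^{\r}(X)=\bigcup_x\omega^{\r}(x)\subseteq L(X)$ and in fact $\overline{\Omega^{\r}(X)}=C(X)=L(X)$ by the first part, so no proper closed invariant subset of $L(X)$ can contain all the $\omega$-limits; hence $L(X)=\overline{\Omega^{\r}(X)}$ is the unique minimal closed global attractor.

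The only genuine point requiring care — and the step I expect to be the main (mild) obstacle — is matching the quantifiers in Definition \ref{attractoruno} precisely: a ``global attractor'' there requires $A_{\r}(M)=X$, i.e. $\omega^{\r}(x)\neq\emptyset$ \emph{and} $\omega^{\r}(x)\subseteq M$ for all $x$, and ``minimal'' must be read as minimal among invariant sets satisfying this; one must check that $L(X)$ being itself an $\omega$-limit value of each of its points (indeed each point of $L(X)$ is critical, so is its own $\omega^{\r}$-limit) forces any attracting invariant subset to contain all of $C(X)=L(X)$. Everything else is a direct transcription of Theorem \ref{limit} and Corollary \ref{attractor}(i) into the attractor terminology, so the proof is short.
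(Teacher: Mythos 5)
Your proof is correct and follows essentially the same route as the paper's: Theorem \ref{limit}(i) yields the global weak attractor property, the flow isomorphism $L(X)\cong\F(X)$ (with its trivial action) gives $L(X)=C(X)$, and in the Hausdorff case minimality and $L(X)=\overline{\Omega^{\r}(X)}$ come from Theorem \ref{limit}(ii) together with each point of $L(X)$ being critical and hence its own $\omega^{\r}$-limit. The only slip is one of wording: points of $L(X)$ are critical because $e_0$ is an isomorphism of \emph{flows} onto $\F(X)$, which carries the trivial flow structure, not a ``trivial externology.''
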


\begin{proof}(i) By (i) of Theorem \ref{limit}, we have that for every $x\in X$, $e_{0}^{-1}\omega_{\r}^0(x)\in \omega^{\r}(x)$,
so $\omega^{\r}(x)\cap L(X)\not= \emptyset$. Thus, $L(X)$ is a global weak attractor. It is clear that
$C(X) \subset L(X)$. Since $X$ is $\Co$-complete, one has that $L(X)\cong \F(X)$ as flows. Taking into account
that the action on $\F(X)$ is trivial, it follows that every point of $L(X)$ is critical. We conclude $L(X)=C(X)$.

(ii) Assume that $M\subset L(X)$ is a global weak attractor. If $x\in L(X)\setminus M$,
since $X$ is $T_1$ and $x$ is critical, we have that
$$\omega^{\r}(x)= \bigcap_{t \geq 0}\overline{ [t, +\infty) \cdot x}=\overline{\{x\}}=\{x\}.$$
 This contradicts the minimality of $M$. On the other hand,
we have that $L(X)=C(X)\subset P(X) \subset \Omega^{\r}(X)$. But by Theorem \ref{limit}, $\omega^{\r}(x)=\{\omega_{\r}^0(x)\}\subset L(X)$, implying that $L(X)=C(X)=\Omega^{\r}(X)$. Finally, since in a Hausdorff flow the set of critical points $C(X)$ is closed,
one has that $L(X)=\overline{\Omega^{\r}(X)}$.

\end{proof}

\section{Ends, Limits  and completions  of a flow via exterior flows}\label{completion}

Recall that we have considered the functor
$$(\cdot)^{{\r}} \colon \bf { F}\to \bf{E^{\r} F},$$
the forgetful functor $$(\cdot)_{\t} \colon \bf{E^{\r} F} \to
\bf{F},$$
the functors $$L, \F
\colon {\bf E^{\r}F}
\to {\bf F},$$
and
$${\bar  L}, \Co
\colon {\bf E^{\r}F}
\to {\bf E^{\r}F}.$$
Therefore we can consider the composites:
$$L^{{\r}}:=L(\cdot)^{{\r}},\hspace{5pt}\F^{{\r}}:=\F(\cdot)^{{\r}},
\hspace{5pt}{\bar L}^{{\r}}:=(\cdot)_{\t} {\bar L}(\cdot)^{{\r}},\hspace{5pt}\Co^{{\r}}:=(\cdot)_{\t} \Co(\cdot)^{{\r}}
$$
to obtain functors $L^{{\r}},$
$\F^{{\r}},{\bar L}^{\r}, \Co^{{\r}}
\colon {\bf F} \to {\bf F}.$

In this way, given a flow $X$, we have the {\it end (trivial) flow}
$\F^{\r}(X)=\F(X^{\r})$,
the {\it limit flow}
$L^{\r}(X)=L(X^{\r})$, the {\it bar-limit flow} ${\bar L}^{\r}(X)={\bar L}(X^{\r})$ and the {\it completion flow}
$\Co^{\r}(X)=(\Co(X^{\r}))_{\t}$.

\bigskip It is interesting to consider the following equivalence of
categories: Given any flow $\varphi \colon \R \times X \to X,$ one
can consider the \emph{reversed flow} $\varphi' \colon \R \times X
\to X$ defined by $\varphi'( r, x)=\varphi (-r,x),$ for every
$(r,x) \in \R \times X.$ The correspondence $(X,\varphi)\to
(X,\varphi')$ gives rise to a functor
$$(\cdot)' \colon {\bf F} \to {\bf F}$$ which is an equivalence of
categories and verifies $(\cdot)' (\cdot)'= \id.$ Using the
composites
$$L^{{\l}}:=(\cdot)'L^{{\r}}(\cdot)',\hspace{5pt}
\F^{{\l}}:=(\cdot)' \F^{{\r}}(\cdot)',\hspace{5pt} {\bar L}^{\l}:=(\cdot)'{\bar L}^{\r}(\cdot)', \hspace{5pt} {\Co}^{\l}:=(\cdot)'{\Co}^{\r}(\cdot)', $$
we obtain the new functors
$L^{{\l}}, \F^{{\l}},{\bar L}^{\l}, \Co^{{\l}} \colon {\bf F}\to {\bf F}.$

\begin{definition} A flow $X$ is said to be \emph{$\Co^{\r}$-complete} (\emph{$\Co^{\l}$-complete}) if the canonical
map $p_0\colon  X \to \Co^{\r}(X)$ is an homeomorphism. A flow is said
\emph{$(\Co^{\r})^2$-complete} \emph{($(\Co^{\l})^2$-complete}) if $\Co^{\r}(X)$ ($\Co^{\l}(X)$) is $\Co^{\r}$-complete
($\Co^{\l}$-complete).
\end{definition}

We remark  that in this definition we denote  $(p_0)_{\t}$ by $p_0$. In fact, for a flow $X$, one has that
$p_0\colon X^{\r}\to \Co(X^{\r})$ is an isomorphism in ${\bf E^{\r}F}$ if and only if $p_0=(p_0)_{\t} \colon X \to \Co^{\r}(X)$
is a flow homeomorphism   in ${\bf F}$.

For the following results we will use some previous properties
whose proofs are given in Theorem 6.3, Lemma 6.11 and Corollary
6.14 of \cite{GHR12}, respectively.

\begin{proposition}\label{the} Let $X$ be a flow and suppose that $X$ is a
$T_1$-space. Then $$P(X)=L^{\r}(X).$$
\end{proposition}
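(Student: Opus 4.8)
The plan is to unwind the definitions on both sides and show the two inclusions separately. Recall that $L^{\r}(X)=L(X^{\r})=\bigcap_{E\in\varepsilon^{\r}(X)}E$, the intersection of all $\r$-exterior open subsets of $X$, while $P(X)$ is the set of periodic points. First I would establish $P(X)\subseteq L^{\r}(X)$: if $x$ is periodic, then its orbit $\R\cdot x$ is compact (a point or a topological circle, by the discussion of the fundamental period in Section \ref{prelidin}), and for any $\r$-exterior $E$ there is for each point $y$ of this orbit a $T^y\in\r$ with $T^y\cdot y\subseteq E$. Since the orbit of a periodic point returns arbitrarily far in time to every point of the orbit, $T^y\cdot y$ already contains the whole orbit, so $\R\cdot x\subseteq E$; in particular $x\in E$. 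As $E\in\varepsilon^{\r}(X)$ was arbitrary, $x\in L^{\r}(X)$. (Note this inclusion does not even need $T_1$.)

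The reverse inclusion $L^{\r}(X)\subseteq P(X)$ is where the $T_1$ hypothesis enters and is the main obstacle. The strategy is contrapositive: given $x$ that is \emph{not} periodic, I must produce an $\r$-exterior open set $E$ with $x\notin E$. Non-periodicity means the motion $\varphi^x\colon\R\to X$ is injective, so in particular $r\cdot x\neq x$ for all $r\neq 0$. The idea is to ``delete a small piece of the orbit near $x$'': one wants an open neighborhood $V$ of $x$ such that $E=X\setminus\overline{\,(\text{some arc of the orbit through }x)\,}$, or more robustly, to use that in a $T_1$ space one can separate $x$ from $r\cdot x$ for each individual $r$, and then patch these separations together using the flow equation and a compactness argument on a compact time interval to build a single open set missing $x$ but absorbing every trajectory. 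Concretely, for $z\in X$ and $r_0$ large, $r\cdot z$ for $r\geq r_0$ should be pushed into $E$; the delicate point is arranging simultaneously that $x\notin E$ and that $E$ genuinely contains a tail of every orbit, including the orbit of $x$ itself (whose tail must enter $E$ even though $x\notin E$ — this is fine precisely because $x$ is not periodic, so the forward orbit of $x$ does not return to $x$, but proving the forward tail stays away from $x$ in a $T_1$-controlled way is the crux).

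Since the excerpt says this proof is given as Theorem 6.3 of \cite{GHR12}, I expect the actual argument to cite that reference and sketch only the key construction. In my own write-up I would: (1) dispatch $P(X)\subseteq L^{\r}(X)$ as above in one short paragraph; (2) for the converse, fix non-periodic $x$, use $T_1$-ness to choose, for a suitable compact set of times $s$, open sets separating $x$ from $s\cdot x$; (3) intersect/translate these along the flow to define a candidate $\r$-exterior open set $E$ avoiding $x$; (4) verify $E$ is open, that $X\setminus E$ meets each orbit in a ``bounded'' time-set so that $E$ is indeed $\r$-exterior, and that $x\notin E$, concluding $x\notin L^{\r}(X)$. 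The main technical obstacle is step (3)–(4): ensuring the deleted neighborhood of $x$ does not accidentally swallow a tail of some other trajectory, which is exactly what fails without a separation axiom and is why $T_1$ is hypothesized.
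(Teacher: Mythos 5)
Your first inclusion $P(X)\subseteq L^{\r}(X)$ is correct: for a periodic point $x$ the tail $(n,+\infty)\cdot x$ is the whole orbit $\R\cdot x$, so $x$ belongs to every $\r$-exterior open set, and indeed no separation axiom is needed. (Note that the paper itself gives no argument for this proposition; it only cites Theorem 6.3 of \cite{GHR12}, so the comparison must be with the argument behind that citation.)

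The converse inclusion is where your plan has a genuine gap. You propose to delete a neighborhood of $x$, or the closure of an arc of the orbit through $x$, and to patch $T_1$-separations of $x$ from the points $r\cdot x$ by a compactness argument over a compact time interval. This cannot work as described: if $x\in\omega^{\r}(z)$ for some $z$ (perfectly possible for a non-periodic $x$, e.g.\ any point on a dense orbit), then $r\cdot z$ re-enters every neighborhood $V$ of $x$ at arbitrarily large times, so no set of the form $X\setminus\overline{V'}$ with $V'$ a neighborhood of $x$ can be $\r$-exterior; the failure occurs at unbounded times and is not repaired by compactness of a time interval. The obstacle you correctly flag as ``the crux'' is thus left unresolved, and $T_1$ does not enter in the way you suggest. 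The missing idea is to delete exactly the singleton: for non-periodic $x$ put $E=X\setminus\{x\}$. This is open because $X$ is $T_1$. It is $\r$-exterior because for each $z$ the set $\{r\in\R\mid r\cdot z=x\}$ has at most one element --- if $r_1\cdot z=x=r_2\cdot z$ with $r_1<r_2$, then $(r_2-r_1)\cdot x=x$, contradicting non-periodicity --- hence is bounded above, so some tail $(r_0,+\infty)\cdot z$ avoids $x$. Since $x\notin E$, we get $x\notin L^{\r}(X)$, which completes $L^{\r}(X)\subseteq P(X)$.
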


\begin{proposition}\label{lem} Let $X$ be a flow and suppose that $X$ is a locally compact regular space.
If  $x \not \in\overline{ \Omega^{\r}(X)},$ then there exists $V_x
\in {({\t}_X)}_x$ such that  $X\setminus \overline{V}_x$ is
$\r$-exterior.
\end{proposition}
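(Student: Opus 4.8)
The plan is to argue contrapositively: assuming $x\notin\overline{\Omega^{\r}(X)}$, I would build $V_x$ out of the open set $X\setminus\overline{\Omega^{\r}(X)}$ and then verify that $X\setminus\overline{V_x}$ is $\r$-exterior by a compactness argument applied to forward trajectories.

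First I would choose the neighborhood. Since $\overline{\Omega^{\r}(X)}$ is closed, $U:=X\setminus\overline{\Omega^{\r}(X)}$ is an open neighbourhood of $x$. By local compactness fix a compact neighbourhood $K$ of $x$; then $U\cap\operatorname{int}(K)$ is open and contains $x$, so by regularity there is an open set $V_x$ with $x\in V_x\subseteq\overline{V_x}\subseteq U\cap\operatorname{int}(K)$. Because $\overline{V_x}\subseteq K$, the closure $\overline{V_x}$ taken in $X$ is also closed in $K$, hence compact; and $\overline{V_x}\subseteq U$ gives $\overline{V_x}\cap\overline{\Omega^{\r}(X)}=\emptyset$. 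This $V_x$ will be the desired neighbourhood, provided $X\setminus\overline{V_x}$ is $\r$-exterior.

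Next I would check that $X\setminus\overline{V_x}$ is $\r$-exterior. It is open because $\overline{V_x}$ is closed, so by the definition of $\r$-exterior subset it remains to show that for every $p\in X$ there is $n\in\N$ with $(n,+\infty)\cdot p\subseteq X\setminus\overline{V_x}$, i.e. that the positive semitrajectory of $p$ eventually avoids $\overline{V_x}$. Suppose not; then there is a net $t_\delta\to+\infty$ with $t_\delta\cdot p\in\overline{V_x}$ for all $\delta$. As $\overline{V_x}$ is compact, some subnet $(t_{\delta_\gamma}\cdot p)_\gamma$ converges to a point $q\in\overline{V_x}$, while the corresponding net of times $(t_{\delta_\gamma})_\gamma$ still tends to $+\infty$; hence $q\in\omega^{\r}(p)\subseteq\Omega^{\r}(X)\subseteq\overline{\Omega^{\r}(X)}$, contradicting $\overline{V_x}\cap\overline{\Omega^{\r}(X)}=\emptyset$. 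Therefore such an $n$ exists for every $p$, and $X\setminus\overline{V_x}$ is $\r$-exterior.

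The only delicate point is point-set rather than dynamical: one must make sure that local compactness together with regularity really delivers a neighbourhood $V_x$ whose closure (taken in $X$) is simultaneously compact and disjoint from $\overline{\Omega^{\r}(X)}$; it is the inclusion $\overline{V_x}\subseteq\operatorname{int}(K)$ that forces the closure of $V_x$ in $X$ to coincide with its closure in $K$ and thus to be compact. Once this is arranged, the dynamical part is a routine use of the net description of $\omega^{\r}(p)$ together with the fact that every net in a compact set has a convergent subnet; note in particular that Hausdorffness is not required, only regularity (so that closed neighbourhoods form a basis) and local compactness.
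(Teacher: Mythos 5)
Your proof is correct: the choice of a compact closed neighbourhood $\overline{V_x}$ disjoint from $\overline{\Omega^{\r}(X)}$ via local compactness and regularity, followed by the subnet/compactness argument producing a point of $\omega^{\r}(p)$ inside $\overline{V_x}$, is exactly the standard argument, and you are right that Hausdorffness is not needed. The paper itself does not prove this proposition but cites Lemma 6.11 of \cite{GHR12}, whose proof proceeds along essentially the same lines, so there is nothing to add.
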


\begin{proposition} \label{cor} Let $X$ be a flow. If $X$ is a locally compact
$T_3$  space, then $L^{\r}(X) = P(X)$, $ \bar L^{\r}(X)=
\overline{\Omega^{\r}(X)}$ and $$L^{\r}(X)=P(X)\subset
P^{\r}(X)\subset \Omega^{\r}(X) \subset
\overline{\Omega^{\r}(X)}=\bar L^{\r}(X).$$
\end{proposition}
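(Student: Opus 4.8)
The statement to prove is Proposition \ref{cor}: for a flow $X$ that is a locally compact $T_3$ space, one has $L^{\r}(X) = P(X)$, $\bar L^{\r}(X) = \overline{\Omega^{\r}(X)}$, together with the chain of inclusions. The plan is to obtain this by combining the three previously cited facts. Since a $T_3$ space is in particular $T_1$, Proposition \ref{the} applies and gives immediately $P(X) = L^{\r}(X)$, which is the first equality. This requires no further work.

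For the second equality, $\bar L^{\r}(X) = \overline{\Omega^{\r}(X)}$, I would argue by a double inclusion. The inclusion $\overline{\Omega^{\r}(X)} \subseteq \bar L^{\r}(X)$ should follow directly from unwinding the definitions: $\bar L^{\r}(X) = \bigcap_{E \in \varepsilon^{\r}(X)} \overline{E}$, and for any $p \in X$ and any $\r$-exterior open $E$ the tail $[r_0,\infty)\cdot p$ lies in $E$, so $\omega^{\r}(p) = \bigcap_{t\geq 0}\overline{[t,\infty)\cdot p} \subseteq \overline{E}$; taking the union over $p$ and then the closure gives $\overline{\Omega^{\r}(X)} \subseteq \overline{E}$ for every $E$, hence the containment in the intersection. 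The reverse inclusion $\bar L^{\r}(X) \subseteq \overline{\Omega^{\r}(X)}$ is the substantive point, and here I would invoke Proposition \ref{lem}: if $x \notin \overline{\Omega^{\r}(X)}$, then there is a neighborhood $V_x$ of $x$ with $X \setminus \overline{V_x}$ an $\r$-exterior open subset; since $x \notin \overline{X\setminus\overline{V_x}}$ (because $\mathrm{int}\,\overline{V_x}$ is a neighborhood of $x$ disjoint from $X\setminus\overline{V_x}$), we get $x \notin \bar L^{\r}(X)$. This is exactly the contrapositive of the desired inclusion.

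Finally, the displayed chain of inclusions is assembled from pieces already available: $L^{\r}(X) = P(X)$ from Proposition \ref{the}; $P(X) \subseteq P^{\r}(X) \subseteq \Omega^{\r}(X)$ is recorded in the preliminaries (the remark after the definition of $\r$-Poisson stable points); $\Omega^{\r}(X) \subseteq \overline{\Omega^{\r}(X)}$ is trivial; and $\overline{\Omega^{\r}(X)} = \bar L^{\r}(X)$ is the equality just established. Concatenating these gives the full chain.

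\textbf{Main obstacle.} The only non-formal step is the inclusion $\bar L^{\r}(X) \subseteq \overline{\Omega^{\r}(X)}$, and essentially all of its difficulty has been exported to Proposition \ref{lem} (proved in \cite{GHR12}), whose hypotheses — local compactness and regularity — are precisely why these are assumed here rather than mere $T_1$-ness. So under the convention that earlier-cited results may be used freely, the proof is a short synthesis; the care needed is only in checking that $T_3$ plus local compactness legitimately feeds both Proposition \ref{the} (via $T_1$) and Propositions \ref{lem} and \ref{cor}, and in writing the elementary inclusion $\overline{\Omega^{\r}(X)} \subseteq \overline{E}$ cleanly.
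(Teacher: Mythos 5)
Your proposal is correct. Note, however, that the paper itself gives no in-text proof of Proposition \ref{cor}: it is one of the three statements whose proofs are deferred to \cite{GHR12} (specifically Corollary 6.14 there), immediately after Propositions \ref{the} and \ref{lem}. Your blind argument reconstructs what is surely the intended derivation: the equality $L^{\r}(X)=P(X)$ is exactly Proposition \ref{the} (using that $T_3$ here includes $T_1$, which is the convention the statement forces, since otherwise the first equality could not be invoked); the inclusion $\overline{\Omega^{\r}(X)}\subseteq \bar L^{\r}(X)$ is the purely formal half, valid for any flow, since for each $\r$-exterior $E$ a tail $[r_0,\infty)\cdot p$ lies in $E$ and hence $\omega^{\r}(p)\subseteq \overline{[r_0,\infty)\cdot p}\subseteq\overline{E}$, so the closed set $\overline{E}$ contains $\overline{\Omega^{\r}(X)}$; and the reverse inclusion is precisely the contrapositive of Proposition \ref{lem}, your observation that $V_x$ itself witnesses $x\notin\overline{X\setminus\overline{V_x}}$ being correct. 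The remaining links $P(X)\subseteq P^{\r}(X)\subseteq\Omega^{\r}(X)$ are recorded in the preliminaries. So your proof is a sound and complete synthesis of the two cited propositions, and it correctly localizes where local compactness and regularity are actually used (only in Proposition \ref{lem}, i.e., only for $\bar L^{\r}(X)\subseteq\overline{\Omega^{\r}(X)}$).
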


Then the following result holds:

\begin{theorem}\label{propcomplete} Let $X$ be a $\Co^{\r}$-complete  flow. Then,
\begin{itemize}
  \item[\rm (i)]   $L^{\r}(X)$  is a  global weak attractor  of $X$ and $L^{\r}(X)=C(X)$.
    \item[\rm (ii)]  If $X$ is $T_{1}$, then $L^{\r}(X)$  is a minimal global weak attractor  of $X$ and $L^{\r}(X)=P(X)$.

    \item[\rm (iii)]  If $X$ is $T_{2}$,  then $L^{\r}(X)$  is the unique  minimal global attractor  of $X$ and $L^{\r}(X)=\overline{\Omega^{\r}(X)}$
 \item[\rm (iv)]      If $X$ is  locally compact and  $T_{3}$, then  $L^{\r}(X)=  \overline{\Omega^{\r}(\Co(X))}= {\bar L^{\r}}(X)$.

\end{itemize}
\end{theorem}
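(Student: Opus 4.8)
The strategy is to transfer the results already obtained for $\Co$-complete $\r$-exterior flows (namely Corollary \ref{attractor}) along the adjunction $(\cdot)^{\r}\dashv (\cdot)_{\t}$, and then to feed in the identifications of $L^{\r}(X)$, $P(X)$ and $\bar L^{\r}(X)$ supplied by Propositions \ref{the}, \ref{lem} and \ref{cor}. The key preliminary observation, already recorded after the definition of $\Co^{\r}$-complete, is that $X$ is $\Co^{\r}$-complete (as a flow) precisely when the $\r$-exterior flow $X^{\r}$ is $\Co$-complete in ${\bf E^{\r}F}$; in particular $L^{\r}(X) = L(X^{\r})$, so all of the conclusions of Corollary \ref{attractor} apply verbatim to $X^{\r}$.

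First I would prove (i): by Corollary \ref{attractor}(i) applied to the $\Co$-complete $\r$-exterior flow $X^{\r}$, the space $L(X^{\r}) = L^{\r}(X)$ is a global weak attractor of $X^{\r}$ and equals $C(X^{\r}) = C(X)$; since the weak-attractor condition and the set of critical points depend only on the underlying flow, this gives (i) for $X$. For (ii), assume $X$ is $T_1$; then Proposition \ref{the} gives $P(X) = L^{\r}(X)$, and combining with (i) we get $L^{\r}(X) = C(X) = P(X)$. Minimality among global weak attractors contained in $L^{\r}(X)$ follows exactly as in the proof of Corollary \ref{attractor}(ii): if $M \subsetneq L^{\r}(X)$ were a global weak attractor and $x \in L^{\r}(X)\setminus M$, then $x$ is critical and $T_1$ forces $\omega^{\r}(x) = \overline{\{x\}} = \{x\}$, so $\omega^{\r}(x)\cap M = \emptyset$, contradicting that $M$ is a global weak attractor.

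For (iii), assume $X$ is $T_2$ (hence $T_1$). Corollary \ref{attractor}(ii) applied to $X^{\r}$ — which is Hausdorff because its underlying space is that of $X$ — yields that $L^{\r}(X)$ is the unique minimal global attractor of $X^{\r}$ and $L^{\r}(X) = \overline{\Omega^{\r}(X^{\r})} = \overline{\Omega^{\r}(X)}$; again these are statements about the underlying flow. For (iv), assume $X$ is locally compact and $T_3$. By Proposition \ref{cor}, $\bar L^{\r}(X) = \overline{\Omega^{\r}(X)}$, and by (iii) (since $T_3 \Rightarrow T_2$) we already have $L^{\r}(X) = \overline{\Omega^{\r}(X)} = \bar L^{\r}(X)$. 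It remains to identify this with $\overline{\Omega^{\r}(\Co(X))}$: here one applies Proposition \ref{cor} (or Proposition \ref{the}) to the flow $\Co^{\r}(X)$ instead of to $X$, using that $\Co^{\r}(X)$ inherits the relevant separation/local compactness properties — this is where I expect the only real friction — and that, $X$ being $\Co^{\r}$-complete, $p_0\colon X \to \Co^{\r}(X)$ is a homeomorphism of flows, so $\Omega^{\r}(\Co(X))$ corresponds to $\Omega^{\r}(X)$ and hence $\overline{\Omega^{\r}(\Co(X))} = \overline{\Omega^{\r}(X)} = \bar L^{\r}(X)$.

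\textbf{Main obstacle.} The parts (i)--(iii) are essentially a translation of Corollary \ref{attractor} through the full embedding $(\cdot)^{\r}$, so the substance is already done. The delicate point in (iv) is checking that the auxiliary flow $\Co(X)$ (equivalently $\Co^{\r}(X) \cong X$) satisfies the hypotheses of Propositions \ref{lem} and \ref{cor} so that $\bar L^{\r}$ and $\overline{\Omega^{\r}}$ may be compared on it; but since $p_0$ is a homeomorphism when $X$ is $\Co^{\r}$-complete, local compactness and $T_3$ transfer immediately, and the identification $\overline{\Omega^{\r}(\Co(X))} = \bar L^{\r}(X)$ then follows from Proposition \ref{cor} together with parts (i)--(iii).
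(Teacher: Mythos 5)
Your proposal is correct and follows essentially the same route as the paper: (i) and (iii) are read off from Corollary \ref{attractor} applied to the $\Co$-complete exterior flow $X^{\r}$, (ii) combines the $T_1$ criticality argument ($\omega^{\r}(x)=\{x\}$ forces $x\in M$) with Proposition \ref{the}, and (iv) comes from Proposition \ref{cor} together with the fact that $p_0\colon X\to\Co^{\r}(X)$ is a flow homeomorphism. The only difference is that you spell out the transfer along $(\cdot)^{\r}$ and the identification $\overline{\Omega^{\r}(\Co(X))}=\overline{\Omega^{\r}(X)}$, which the paper leaves implicit.
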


\begin{proof}(i) It follows from (i) of Corollary \ref{attractor}.

(ii) Suppose that $M\subset L^{\r}(X)$ is a global weak attractor. Take $x\in L^{\r}(X)$;
since $X$ is $T_1$ and $x$ is critical, we have that $\omega^{\r}(x)=\{x\}$ and therefore   $x\in M$.
By Proposition \ref{the} one has that $L^{\r}(X)=P(X)$.

(iii) follows from (ii) of Corollary \ref{attractor}
and
(iv)  is a consequence of Proposition \ref{cor}.
\end{proof}

\begin{theorem} \label{cHausdorff} Let $X$ be a locally path-connected, locally compact $T_{2}$ flow.   Then,  $L^{\r}(X)= \overline{\Omega^{\r}(X)}$ if and only if $\Co^{\r}(X)$ is a $T_{2}$  flow.
\end{theorem}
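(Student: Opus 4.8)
The plan is to prove the two implications separately, using as the main bridge Theorem \ref{limit}, Corollary \ref{attractor}, and Proposition \ref{cor}, together with the Hausdorffness criterion of Theorem \ref{condHausdorff}(ii) applied to the exterior flow $X^{\r}$. Throughout I work with the $\r$-exterior flow $X^{\r}=(X,\varepsilon^{\r}(X))$, recalling that $L^{\r}(X)=L(X^{\r})$, that $\Co^{\r}(X)=(\Co(X^{\r}))_{\t}$, and that $p_0\colon X\to \Co^{\r}(X)$ is the canonical flow morphism.

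For the implication ``$\Co^{\r}(X)$ is $T_2$ $\Rightarrow$ $L^{\r}(X)=\overline{\Omega^{\r}(X)}$'': since $X$ is locally compact and $T_3$ (being locally compact $T_2$), Proposition \ref{cor} already gives $L^{\r}(X)=P(X)\subset \overline{\Omega^{\r}(X)}=\bar L^{\r}(X)$, so it remains to show the reverse inclusion $\overline{\Omega^{\r}(X)}\subset L^{\r}(X)$. First I would argue that $\Co^{\r}(X)$ being $T_2$ forces $p_0\colon X\to \Co^{\r}(X)$ to be a flow homeomorphism, i.e.\ $X$ is $\Co^{\r}$-complete: indeed, $p_0$ is always a continuous exterior bijection onto its image, and one checks that a $T_2$ hypothesis on the target, combined with local path-connectedness and the fact (Theorem \ref{redes}(ii)) that every trajectory net $p_0\varphi^x$ converges in $\Co(X^{\r})$, lets one verify the criterion of Theorem \ref{bijectioncero} for $X^{\r}$ — the separation condition needed there is exactly $T_2$-type separation of points of $L(X^{\r})$, and the $q_0$-saturated neighbourhood condition follows from local path-connectedness as in the proof of Theorem \ref{lpcomplete}. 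Once $X$ is $\Co^{\r}$-complete, Theorem \ref{propcomplete}(iii) (applicable since $X$ is $T_2$) gives directly $L^{\r}(X)=\overline{\Omega^{\r}(X)}$.

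For the converse ``$L^{\r}(X)=\overline{\Omega^{\r}(X)}$ $\Rightarrow$ $\Co^{\r}(X)$ is $T_2$'': I would apply Theorem \ref{condHausdorff}(ii) to the exterior space $X^{\r}$. That theorem requires $X$ to be $T_2$ (given) and requires that for every $x\in X\setminus L^{\r}(X)$ there be a closed neighbourhood $F$ of $x$ with $X\setminus F\in\varepsilon^{\r}(X)$. Now $x\notin L^{\r}(X)=\overline{\Omega^{\r}(X)}$ means exactly $x\notin\overline{\Omega^{\r}(X)}$, so Proposition \ref{lem} (using local compactness and regularity of $X$) provides $V_x\in(\t_X)_x$ such that $X\setminus\overline{V_x}$ is $\r$-exterior; taking $F=\overline{V_x}$ — a closed neighbourhood of $x$ since $\overline{V_x}\supset V_x\ni x$ — gives the required condition. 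Hence Theorem \ref{condHausdorff}(ii) yields that $\Co(X^{\r})$ is Hausdorff, and therefore $\Co^{\r}(X)=(\Co(X^{\r}))_{\t}$ is a $T_2$ flow.

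The main obstacle I anticipate is the first implication, specifically the step deducing $\Co^{\r}$-completeness of $X$ from Hausdorffness of $\Co^{\r}(X)$: one must carefully check that the $T_2$ property of the target is strong enough to force $p_0$ to be a homeomorphism (not merely injective and continuous), which requires showing $p_0$ is open, and this is where local path-connectedness of $X$ — hence of $X^{\r}$ — is essential, via the $q_0$-saturated open neighbourhoods supplied by path-components of $\r$-exterior sets. The second implication, by contrast, is a fairly direct assembly of Proposition \ref{lem} and Theorem \ref{condHausdorff}(ii).
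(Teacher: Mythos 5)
Your second implication (from $L^{\r}(X)=\overline{\Omega^{\r}(X)}$ to Hausdorffness of the completion) is exactly the paper's argument: Proposition \ref{lem} gives, for each $x\notin\overline{\Omega^{\r}(X)}=L^{\r}(X)$, a closed neighbourhood $\overline{V_x}$ whose complement is $\r$-exterior, and Theorem \ref{condHausdorff}(ii) applied to $X^{\r}$ finishes. That half is correct and complete.

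The other implication has a genuine gap: your intermediate claim that Hausdorffness of $\Co^{\r}(X)$ forces $X$ to be $\Co^{\r}$-complete is false. Completeness requires in particular that $e_0\colon L(X^{\r})\to\F(X^{\r})$ be surjective, i.e.\ that every end be $e_0$-representable, and no separation hypothesis on the target delivers this. Concretely, the translation flow $\varphi(t,s)=t+s$ on $X=\R$ is locally path-connected, locally compact and $T_2$; here $\varepsilon^{\r}(X)=\r$, $L^{\r}(X)=\emptyset$, $\F(X^{\r})$ is a single non-representable end, and $\Co^{\r}(X)\cong(0,1]$ is $T_2$, yet $p_0$ is not surjective and $X$ is not $\Co^{\r}$-complete (the theorem's conclusion still holds, since $\overline{\Omega^{\r}(X)}=\emptyset$ as well). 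Note that Theorem \ref{redes}(ii) only gives convergence of $p_0\varphi^x$ to a point of $L(\Co(X^{\r}))=\incl_0(\F(X^{\r}))$, not to a point of $p_0(L^{\r}(X))$, so it cannot be used to verify the surjectivity half of the criterion in Theorem \ref{bijectioncero}; this is exactly where your verification breaks down. The paper's route avoids completeness altogether: $L^{\r}(X)=P(X)\subset\Omega^{\r}(X)$ by Proposition \ref{cor} (locally compact $T_2$ gives $T_3$); $L^{\r}(X)=p_0^{-1}(C(\Co^{\r}(X)))$ is closed because the critical set of the Hausdorff flow $\Co^{\r}(X)$ is closed; and $\Omega^{\r}(X)\subset L^{\r}(X)$ follows from a double-limit argument --- if $y\in\omega^{\r}(x)\setminus L^{\r}(X)$ with $\varphi^x(t_\delta)\to y$, $t_\delta\to+\infty$, then $p_0(\varphi^x(t_\delta))$ converges both to $p_0(y)$ and, being a subnet of $p_0\varphi^x$, to $\omega_{\r}^0(x)\in L(\Co(X^{\r}))$, two distinct points, contradicting $T_2$. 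You should replace your completeness step by an argument of this kind.
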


\begin{proof}
Ir order to apply Theorem \ref{condHausdorff}, take $x\in X\setminus L^{\r}(X)$. Since $L^{\r}(X)= \overline{\Omega^{\r}(X)}$ by Theorem \ref{propcomplete}, we have that $x\not \in \overline{ {\Omega^{\r}(X)}}$. By Proposition \ref{lem}, there is $V_x\in (\t_{X})_x$ such that $X\setminus \overline{V_x}$ is $\r$-exterior. It follows that $\Co^{\r}(X)$ is a $T_{2}$  flow.

Conversely, suppose that $\Co^{\r}(X)=\Co(X^{\r})$ is a $T_{2}$ flow. By Theorem \ref{propcomplete},
$L(\Co(X^{\r}))=P(\Co(X^{\r})$.  We also have that $C(\Co(X^{\r}))\subset P(\Co(X^{\r})$
and in this case, since $P(\Co(X^{\r})=L(\Co(X^{\r}))\cong \F(X)$ has a trivial action,
we have that $P(\Co(X^{\r}) \subset C(\Co(X))$. Therefore $L(\Co(X^{\r}))=P(\Co(X^{\r})=C(\Co(X^{\r}))$.

We also have that
$C(\Co^{\r}(X))$ is a closed space because  $\Co^{\r}(X)$ is  $T_{2}$ .
This implies that $$L^{\r}(X)=p_{0}^{-1}(L(\Co^{\r}(X)))=p_{0}^{-1}(C(\Co^{\r}(X)))$$ is also closed. Now suppose
$\Omega^{\r}(X)\setminus L^{\r}(X)\not = \emptyset$, then there are
$\varphi^x$ and $y\in \Omega^{\r}(X)\setminus L(X^{\r})$ such that
$\varphi^x(t_{\delta})\to y$, $t_{\delta}\to +\infty$. This implies that
$p_{0}(\varphi^x(t_{\delta}))\to p_{0}(y)$ and $p_{0}(\varphi^x(t_{\delta}))\to \omega_{\r}^{0}(x)$
and $ \omega_{\r}^{0}(x)\not = p_{0}(y)$. Then $\Co^{\r}(X)$ is not  $T_{2}$, which is
a contradiction. Therefore, $L^{\r}(X)=\Omega^{\r}(X)$. Finaly, taking into account that
$L^{\r}(X)$ is closed we have that $L^{\r}(X)=\overline{\Omega^{\r}(X)}$.
\end{proof}

\begin{definition} A flow $X$ is said to be \emph{Lagrange stable} if for every $x\in X$ the semi-trajectory
$[0,\infty)\cdot x$ is contained in  a compact subspace.
\end{definition}

\begin{corollary} Let $X$ be a locally path-connected, locally compact $T_{2}$ flow and suppose that
$X$ is Lagrange stable.   Then,  $L^{\r}(X)$ is the minimal closed global attractor if and only if $\Co^{\r}(X)$ is a $T_{2}$  flow.
\end{corollary}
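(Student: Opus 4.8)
The plan is to route everything through Theorem~\ref{cHausdorff}, which already characterizes the $T_2$-ness of $\Co^{\r}(X)$, for a locally path-connected, locally compact $T_2$ flow, by the equality $L^{\r}(X)=\overline{\Omega^{\r}(X)}$. Since these hypotheses are part of the present statement, it suffices to prove that, under the additional assumption of Lagrange stability, the two conditions ``$L^{\r}(X)$ is the minimal closed global attractor'' and ``$L^{\r}(X)=\overline{\Omega^{\r}(X)}$'' are equivalent; the corollary then follows by composing this equivalence with the one in Theorem~\ref{cHausdorff}.

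First I would establish the role of Lagrange stability. If $X$ is Lagrange stable, then for every $x\in X$ the semi-trajectory $[0,\infty)\cdot x$ is contained in some compact (hence, as $X$ is $T_2$, closed) subspace $K$; consequently the family $\{\overline{[t,\infty)\cdot x}\mid t\geq 0\}$ is a decreasing family of non-empty closed subsets of $K$, so its intersection $\omega^{\r}(x)=\bigcap_{t\geq 0}\overline{[t,\infty)\cdot x}$ is non-empty. Using Definition~\ref{attractoruno} and the fact that $\omega^{\r}(x)\subset\Omega^{\r}(X)\subset\overline{\Omega^{\r}(X)}$ always, this gives $A_{\r}(\overline{\Omega^{\r}(X)})=X$, so $\overline{\Omega^{\r}(X)}$ is a closed global attractor. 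Moreover, if $M$ is any closed global attractor, then $\omega^{\r}(x)\subset M$ for all $x$, hence $\Omega^{\r}(X)\subset M$ and $\overline{\Omega^{\r}(X)}\subset M$. Thus $\overline{\Omega^{\r}(X)}$ is itself a closed global attractor and is contained in every closed global attractor; in particular it is the unique minimal closed global attractor.

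With this in hand the equivalence is formal. If $\Co^{\r}(X)$ is $T_2$, Theorem~\ref{cHausdorff} gives $L^{\r}(X)=\overline{\Omega^{\r}(X)}$, which by the previous paragraph is the minimal closed global attractor. Conversely, if $L^{\r}(X)$ is a minimal closed global attractor, then being a closed global attractor it contains $\overline{\Omega^{\r}(X)}$, which is itself a closed global attractor, so minimality forces $L^{\r}(X)=\overline{\Omega^{\r}(X)}$; Theorem~\ref{cHausdorff} then yields that $\Co^{\r}(X)$ is $T_2$. The one point that genuinely requires care is the non-emptiness argument in the middle step: it is Lagrange stability, not mere local compactness, that makes $\omega^{\r}(x)\neq\emptyset$ and hence makes $\overline{\Omega^{\r}(X)}$ an attractor at all — without it the ``minimal closed global attractor'' could fail to exist and the equivalence would be vacuous on one side. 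Everything else is a routine unwinding of the definitions combined with Theorem~\ref{cHausdorff}.
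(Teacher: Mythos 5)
Your proof is correct and follows essentially the same route as the paper: use Lagrange stability to guarantee $\omega^{\r}(x)\neq\emptyset$ for all $x$, deduce that $\overline{\Omega^{\r}(X)}$ is the unique minimal closed global attractor, and then invoke Theorem~\ref{cHausdorff}. You are somewhat more explicit than the paper in verifying that $\overline{\Omega^{\r}(X)}$ is itself a closed global attractor (a step the paper leaves implicit), but the argument is the same.
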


\begin{proof}
Since $X$ is Lagrange stable, we have that for every $x\in X$, $\omega^{\r}(x)\not = \emptyset$. If $M$ is a closed global attractor, then $ \Omega^{r}(X) \subset M$ and  $\overline{\Omega^{r}(X)}\subset \overline{M}=M$. Therefore the unique minimal
global closed attractor of $X$ is $\overline{\Omega^{r}(X)}$. Now it suffices to apply the result given in Theorem \ref{cHausdorff}.
\end{proof}

\begin{theorem} \label{fccomplete} Let $X$ be a locally path-connected, locally compact $T_{2}$ flow satisfying that
$X^{\r}$ is first countable at infinity.   Then,  $L^{\r}(X)= \overline{\Omega^{\r}(X)}$ if and only if $\Co^{\r}(X)$ is a $T_{2}$ $\Co^{\r}$-complete  flow.
\end{theorem}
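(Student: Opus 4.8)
The statement combines Theorem \ref{cHausdorff} (the $T_2$-ness of $\Co^{\r}(X)$ is equivalent to $L^{\r}(X)=\overline{\Omega^{\r}(X)}$) with Theorem \ref{c2complete} (a locally path-connected exterior space that is first countable at infinity has $\Co$-complete completion). The plan is therefore to reduce the biconditional to these two results. For the forward direction, assume $L^{\r}(X)=\overline{\Omega^{\r}(X)}$. By Theorem \ref{cHausdorff}, $\Co^{\r}(X)$ is a $T_2$ flow, so it remains to prove $\Co^{\r}(X)$ is $\Co^{\r}$-complete, i.e. that $p_0\colon \Co(X^{\r})\to \Co(\Co(X^{\r}))$ is an isomorphism in ${\bf E^{\r}F}$. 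Here I would invoke Theorem \ref{c2complete} applied to the exterior space $X^{\r}$: it gives that $\Co(X^{\r})$ is $\Co$-complete provided $X^{\r}$ is locally path-connected and first countable at infinity. First countability at infinity is a hypothesis; local path-connectedness of $X^{\r}$ follows because $X$ is locally path-connected and $X^{\r}$ has the same underlying topology as $X$. Thus $\Co(X^{\r})$ is $\Co$-complete as an exterior space, and by the remark following the definition of $\Co^{\r}$-complete (that $p_0\colon Y^{\r}\to \Co(Y^{\r})$ is an isomorphism in ${\bf E^{\r}F}$ iff $(p_0)_{\t}$ is a flow homeomorphism) this upgrades to $\Co^{\r}$-completeness of $\Co^{\r}(X)$ as a flow.

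For the converse, suppose $\Co^{\r}(X)$ is a $T_2$ $\Co^{\r}$-complete flow. In particular $\Co^{\r}(X)$ is $T_2$, so Theorem \ref{cHausdorff} immediately yields $L^{\r}(X)=\overline{\Omega^{\r}(X)}$. This half is essentially a triviality once the statement of Theorem \ref{cHausdorff} is in hand, since the $\Co^{\r}$-completeness hypothesis is not even needed for it; I would state it in one line.

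**Key steps, in order.** (1) Note the underlying topology of $X^{\r}$ equals that of $X$, so $X^{\r}$ is locally path-connected; combined with the ``first countable at infinity'' hypothesis on $X^{\r}$, Theorem \ref{c2complete} applies. (2) Assuming $L^{\r}(X)=\overline{\Omega^{\r}(X)}$: apply Theorem \ref{cHausdorff} to get $\Co^{\r}(X)$ is $T_2$; apply Theorem \ref{c2complete} to get $\Co(X^{\r})$ is $\Co$-complete; translate this to $\Co^{\r}$-completeness of the flow $\Co^{\r}(X)$ via the equivalence recorded after the definition of $(\Co^{\r})^2$-complete. (3) Conversely, from $\Co^{\r}(X)$ being $T_2$ apply the converse direction of Theorem \ref{cHausdorff} to recover $L^{\r}(X)=\overline{\Omega^{\r}(X)}$. (4) Conclude the biconditional.

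**Main obstacle.** The only subtle point is bookkeeping: making sure that ``$\Co$-complete as an exterior flow'' and ``$\Co^{\r}$-complete as a flow'' are correctly interchanged, which relies on the fact that $\Co$ applied to an $\r$-exterior flow already carries the induced $\r$-exterior flow structure (the Proposition on ${\check\varphi}_0$) and on the adjunction $(\cdot)^{\r}\dashv(\cdot)_{\t}$ with $(\cdot)_{\t}(\cdot)^{\r}=\id$. One should also check that ``$X^{\r}$ first countable at infinity'' is genuinely what Theorem \ref{c2complete} needs applied to $X^{\r}$ — it is, verbatim. No new estimates or constructions are required; the proof is a short assembly of already-established theorems, and I expect it to be only a few lines long.
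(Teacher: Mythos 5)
Your proposal is correct and takes essentially the same route as the paper, whose entire proof of this theorem is the one-line citation of Theorem \ref{cHausdorff} and Theorem \ref{c2complete}. Your expanded bookkeeping (local path-connectedness of $X^{\r}$ from that of $X$, and the translation between $\Co$-completeness of the exterior flow $\Co(X^{\r})$ and $\Co^{\r}$-completeness of the flow $\Co^{\r}(X)$) just makes explicit what the paper leaves implicit.
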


\begin{proof} It follows from Theorem \ref{cHausdorff} and Theorem \ref{c2complete}.
\end{proof}

\begin{theorem}\label{escapar} Let $X$ be a  locally compact, locally path-connected, connected $T_{2}$ flow and
$\varepsilon(X^{\r})\subset \varepsilon^{\e}(X)$. Then,  $L^{\r}(X)=P(X)=  \overline{\Omega^{\r}(X)}$
if and only if $\Co^{\r}(X)$ is a $T_{4}$ compact  flow.
\end{theorem}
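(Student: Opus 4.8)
The plan is to derive Theorem~\ref{escapar} as a combination of Theorem~\ref{main}, Theorem~\ref{cHausdorff}, Theorem~\ref{condcompact}, and the hypothesis-verification technique already used in the proof of Theorem~\ref{main}, exploiting the dictionary between dynamical notions for $X$ and exterior notions for $X^{\r}$ supplied by Propositions~\ref{the}, \ref{lem} and \ref{cor}. First I would record the immediate translations: $L^{\r}(X)=L(X^{\r})$, $\Co^{\r}(X)=(\Co(X^{\r}))_{\t}$, and by Proposition~\ref{cor} (applicable since $X$ is locally compact $T_3$) one has $L^{\r}(X)=P(X)$ and $\bar L^{\r}(X)=\overline{\Omega^{\r}(X)}$; so the hypothesis ``$L^{\r}(X)=P(X)=\overline{\Omega^{\r}(X)}$'' is equivalent to ``$L^{\r}(X)=\overline{\Omega^{\r}(X)}$'', which is the hypothesis appearing in Theorem~\ref{cHausdorff}.

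\medskip\noindent
For the forward implication, assume $L^{\r}(X)=\overline{\Omega^{\r}(X)}$. By Theorem~\ref{cHausdorff} (whose hypotheses are met), $\Co^{\r}(X)$ is a $T_2$ flow; in particular, tracing its proof, for every $x\in X\setminus L^{\r}(X)$ Proposition~\ref{lem} produces $V_x\in(\t_X)_x$ with $X\setminus\overline{V_x}$ being $\r$-exterior, which is precisely the ``closed neighbourhood'' condition of Theorem~\ref{main}. Since $X^{\r}$ is locally path-connected, connected, Hausdorff and $\varepsilon(X^{\r})\subset\varepsilon^{\e}(X)$, Theorem~\ref{main} applies verbatim to $X^{\r}$ and yields that $\Co(X^{\r})$ is a Hausdorff compact exterior space with $L(X^{\r})$ closed. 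A compact Hausdorff space is $T_4$, so $\Co^{\r}(X)$ is a $T_4$ compact flow.

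\medskip\noindent
For the converse, suppose $\Co^{\r}(X)$ is a $T_4$ compact flow. Being $T_4$ it is in particular $T_2$, so Theorem~\ref{cHausdorff} (converse direction) gives $L^{\r}(X)=\overline{\Omega^{\r}(X)}$, and then Proposition~\ref{cor} (or Proposition~\ref{the}, as $X$ is $T_1$) upgrades this to $L^{\r}(X)=P(X)=\overline{\Omega^{\r}(X)}$, as required. The only genuinely new content is thus the compactness/$T_4$ upgrade in the forward direction, and I would obtain it not by re-proving Theorem~\ref{condcompact} but by citing Theorem~\ref{main} applied to $X^{\r}$; the one point to check carefully is that all its hypotheses transfer (local path-connectedness, connectedness and Hausdorffness are inherited by $X^{\r}$ from $X$ since they depend only on the underlying topology, and $\varepsilon(X^{\r})\subset\varepsilon^{\e}(X)$ is assumed), and that the closed-neighbourhood hypothesis of Theorem~\ref{main} is exactly what the proof of Theorem~\ref{cHausdorff} extracts from Proposition~\ref{lem}.

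\medskip\noindent
\textbf{Main obstacle.} The subtlety I expect to spend the most care on is the interplay of separation hypotheses: Theorem~\ref{cHausdorff} only delivers $T_2$, while we must reach $T_4$ and compactness. The resolution is that compact Hausdorff automatically implies $T_4$, so it suffices to secure compactness, and compactness of $\Co(X^{\r})$ is exactly the conclusion of Theorem~\ref{main} — provided one verifies, as above, that the hypothesis ``for every $x\in X^{\r}\setminus L(X^{\r})$ there is a closed neighbourhood $F$ with $X^{\r}\setminus F$ exterior'' holds, which follows from Proposition~\ref{lem} once we know $x\notin\overline{\Omega^{\r}(X)}=L^{\r}(X)$. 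So the real work is bookkeeping the equivalences $L^{\r}(X)=P(X)=\overline{\Omega^{\r}(X)} \iff L^{\r}(X)=\overline{\Omega^{\r}(X)}$ and making sure no hypothesis of the quoted theorems is left unchecked; there should be no hard estimate.
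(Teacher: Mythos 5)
Your proposal is correct and follows essentially the same route as the paper, which proves Theorem~\ref{escapar} simply by citing Proposition~\ref{lem}, Theorem~\ref{main} and Theorem~\ref{cHausdorff}; you have merely filled in the bookkeeping (verifying the closed-neighbourhood hypothesis of Theorem~\ref{main} via Proposition~\ref{lem}, using compact Hausdorff $\Rightarrow T_4$, and invoking Proposition~\ref{the}/\ref{cor} to identify $L^{\r}(X)=P(X)$). No discrepancies to report.
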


\begin{proof} It is a consequence of Proposition \ref{lem}, Theorem \ref{main} and Theorem  \ref{cHausdorff}.

\end{proof}

\begin{theorem}\label{segcountable} Let $X$ be a second countable,  locally compact, locally  path-connected, connected $T_{2}$ flow and
$\varepsilon(X^{\r})\subset \varepsilon^{\e}(X)$. Then, $L^{\r}(X)=P(X)=  \overline{\Omega^{\r}(X)}$ if and only if
$\Co^{\r}(X)$ is a $T_{4}$ compact $\Co^{\r}$-complete flow.
\end{theorem}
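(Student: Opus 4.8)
The statement to be proved is Theorem \ref{segcountable}, which adds the hypothesis "second countable" to the setting of Theorem \ref{escapar}. The natural strategy is to deduce it from the already-established equivalence in Theorem \ref{escapar} together with the "$\Co$-complete" conclusion coming from Theorem \ref{c2complete} (or rather Theorem \ref{fccomplete}), exactly as Theorem \ref{escapar}'s proof combined Theorem \ref{main} with Theorem \ref{cHausdorff}. The only new thing to check is that the extra "second countable" hypothesis on $X$ forces $X^{\r}$ to be first countable at infinity, which then lets us upgrade "$T_4$ compact" to "$T_4$ compact $\Co^{\r}$-complete".

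\begin{proof}
Assume first that $L^{\r}(X)=P(X)=\overline{\Omega^{\r}(X)}$. By Theorem \ref{escapar}, $\Co^{\r}(X)$ is a $T_{4}$ compact flow. It remains to see that it is $\Co^{\r}$-complete; by Theorem \ref{fccomplete} it suffices to check that $X^{\r}$ is first countable at infinity. Since $X$ is second countable and locally compact $T_2$, it is $\sigma$-compact; write $X=\bigcup_{n\in\N}K_n$ with each $K_n$ compact and $K_n\subset\operatorname{int}(K_{n+1})$. From the hypothesis $\varepsilon(X^{\r})\subset\varepsilon^{\e}(X)$ and Proposition \ref{lem} (applied to points of $X\setminus\overline{\Omega^{\r}(X)}=X\setminus L^{\r}(X)$), the externology $\varepsilon(X^{\r})$ has a base consisting of complements of closed compact subsets of $X$ that are saturated in the appropriate sense; intersecting such a complement with the cofinal family $\{X\setminus K_n\}$ and using that $\varepsilon(X^{\r})$ is closed under finite intersections and up-sets, one obtains a countable base $E_0\supset E_1\supset E_2\supset\cdots$ of $\varepsilon(X^{\r})$. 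Hence $X^{\r}$ is first countable at infinity, and Theorem \ref{fccomplete} gives that $\Co^{\r}(X)$ is a $T_2$ (indeed $T_4$) compact $\Co^{\r}$-complete flow.

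Conversely, assume $\Co^{\r}(X)$ is a $T_{4}$ compact $\Co^{\r}$-complete flow. In particular it is a $T_{2}$ flow, so by Theorem \ref{cHausdorff} we get $L^{\r}(X)=\overline{\Omega^{\r}(X)}$. On the other hand, since $X$ is $T_2$ it is $T_1$, and Proposition \ref{the} yields $P(X)=L^{\r}(X)$. Combining, $L^{\r}(X)=P(X)=\overline{\Omega^{\r}(X)}$, as required.
\end{proof}

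The step I expect to be the main obstacle is extracting a \emph{countable} base of $\varepsilon(X^{\r})$ from second countability plus $\sigma$-compactness: one must check that the cofinal exhaustion $\{X\setminus K_n\}$ can be intersected with the compact-complement base elements supplied by Proposition \ref{lem} while staying inside $\varepsilon(X^{\r})$, so that the resulting countable decreasing family is genuinely a base. Everything else is a direct invocation of the earlier theorems (\ref{escapar}, \ref{fccomplete}, \ref{cHausdorff}, \ref{the}); once first-countability-at-infinity is secured, the two implications are immediate.
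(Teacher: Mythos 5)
Your overall route is the same as the paper's: both directions reduce to Theorems \ref{escapar} and \ref{fccomplete}, and the only genuinely new content is showing that $X^{\r}$ is first countable at infinity. However, your construction of the countable base has a real error. You exhaust \emph{all} of $X$ by compacts $K_n$ and then propose to intersect an exterior set of the form $X\setminus K$ with $X\setminus K_n$. But $X\setminus K_n$ is in general \emph{not} $\r$-exterior: every $E\in\varepsilon(X^{\r})$ must contain $L^{\r}(X)=\bigcap_{E\in\varepsilon(X^{\r})}E$, and as soon as $K_n$ meets $L^{\r}(X)$ (which it eventually must if $L^{\r}(X)\neq\emptyset$, since the $K_n$ cover $X$) the set $X\setminus(K\cup K_n)$ misses a point of $L^{\r}(X)$ and so cannot lie in the externology. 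Closure under up-sets only produces larger sets and cannot repair this, so the family you build is not a base of $\varepsilon(X^{\r})$ consisting of exterior sets.

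The correct construction, which is what the paper does, exhausts the open subspace $X\setminus L^{\r}(X)$ rather than $X$: it is second countable, locally compact and $T_2$, hence $\sigma$-compact, so write it as $\bigcup_n C_n$ with $C_n$ compact and $C_n\subset \mbox{int}(C_{n+1})$. For each $n$, Proposition \ref{lem} and compactness of $C_n$ give a closed compact $K_n\supset C_n$ that is a finite union of sets $\overline{V}_x$ with $X\setminus\overline{V}_x$ $\r$-exterior, so $X\setminus K_n\in\varepsilon(X^{\r})$ by closure under finite intersections. These form a base: for any $E\in\varepsilon(X^{\r})$, the hypothesis $\varepsilon(X^{\r})\subset\varepsilon^{\e}(X)$ makes $X\setminus E$ a compact subset of $X\setminus L^{\r}(X)$, hence contained in some $C_N\subset K_N$, whence $X\setminus K_N\subset E$. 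With this replacement your argument goes through; the converse direction of your proof (via Theorem \ref{cHausdorff} and Proposition \ref{the}) is fine as written.
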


\begin{proof} Suppose  $L^{\r}(X)=P(X)=  \overline{\Omega^{\r}(X)}$. Taking into account that
$X\setminus L^{\r}(X)$ is also second countable and Proposition \ref{lem}, one can find
an increasing sequence $K_n$ of compacts with $X\setminus K_n\in \varepsilon(X^{\r})$ which
covers $X\setminus L^{\r}(X)$. We have that $X\setminus K_n$ is an exterior countable base of $\varepsilon(X^{\r})$.
Now, we just have to use Theorem \ref{escapar} and Theorem  \ref{fccomplete}.
\end{proof}

\begin{corollary} Let $X$ be a  locally  path-connected, connected, compact metric  flow. Then,
  $L^{\r}(X) =\overline{\Omega^{\r}(X)}$ ($L^{\r}(X)$ is the minimal global attractor)  if and only if
$\Co^{\r}(X)$ is a $T_{2}$ compact $\Co^{\r}$-complete flow.
\end{corollary}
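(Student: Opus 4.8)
The plan is to obtain this corollary as a direct specialization of Theorem~\ref{segcountable}, after checking that a compact metric flow satisfies all the hypotheses there and that the $T_{2}$-compact and $T_{4}$-compact conditions coincide in this setting. First I would record the elementary observations: a compact metric space is second countable, locally compact and $T_{4}$ (hence $T_{2}$ and $T_{3}$), and by hypothesis $X$ is also locally path-connected and connected. Moreover, since $X$ is compact, every closed subset of $X$ is compact, so the cocompact externology is the total one, $\varepsilon^{\e}(X)=\t_{X}$; consequently the inclusion $\varepsilon(X^{\r})\subset\varepsilon^{\e}(X)$ required in Theorem~\ref{segcountable} holds automatically.

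Next I would reconcile the two forms of the dynamical condition. Because $X$ is locally compact and $T_{3}$, Proposition~\ref{cor} gives $L^{\r}(X)=P(X)$ unconditionally. Hence the equality $L^{\r}(X)=\overline{\Omega^{\r}(X)}$ appearing in the corollary is equivalent to the chain $L^{\r}(X)=P(X)=\overline{\Omega^{\r}(X)}$ which is the hypothesis of Theorem~\ref{segcountable}. Applying that theorem, this is in turn equivalent to $\Co^{\r}(X)$ being a $T_{4}$ compact $\Co^{\r}$-complete flow. Finally, a $T_{4}$ space is $T_{2}$, and conversely a compact Hausdorff space is $T_{4}$; therefore ``$\Co^{\r}(X)$ is a $T_{4}$ compact $\Co^{\r}$-complete flow'' and ``$\Co^{\r}(X)$ is a $T_{2}$ compact $\Co^{\r}$-complete flow'' are the same condition, which yields the stated equivalence.

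For the parenthetical remark, I would note that a compact flow is Lagrange stable, so $\omega^{\r}(x)\neq\emptyset$ for every $x\in X$; as observed after Theorem~6.6 in the introduction (and in the corollary following Theorem~\ref{cHausdorff}), this makes $\overline{\Omega^{\r}(X)}$ the minimal closed global attractor, so that the condition $L^{\r}(X)=\overline{\Omega^{\r}(X)}$ exactly says that the limit flow $L^{\r}(X)$ is that minimal global attractor.

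I do not expect a genuine obstacle here, since the argument is purely a matter of matching hypotheses; the only point requiring a little care is the identification $\varepsilon^{\e}(X)=\t_{X}$ for compact $X$ together with the verification, via Proposition~\ref{cor}, that adding ``$=P(X)$'' to the equality $L^{\r}(X)=\overline{\Omega^{\r}(X)}$ costs nothing in the compact-metric setting, so that Theorem~\ref{segcountable} applies verbatim.
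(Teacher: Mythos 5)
Your proposal is correct and takes essentially the same route as the paper, which likewise obtains the corollary by specializing Theorem~\ref{segcountable} after noting that a compact metric space is second countable and that $T_{2}$ and $T_{4}$ coincide for compact spaces. In fact you are somewhat more careful than the paper's own two-line proof, since you explicitly verify the inclusion $\varepsilon(X^{\r})\subset\varepsilon^{\e}(X)=\t_{X}$ and use Proposition~\ref{cor} to reconcile the condition $L^{\r}(X)=\overline{\Omega^{\r}(X)}$ with the stronger-looking hypothesis $L^{\r}(X)=P(X)=\overline{\Omega^{\r}(X)}$ of Theorem~\ref{segcountable}, two points the paper leaves implicit.
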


\begin{proof} Note that for a compact space, $T_2$ is equivalent to $T_4$, and for  a metric space, second contable is equivalent to Lindel\"of. We also have that a compact space is a Lindel\"of space. Therefore, under these hypothesis
we can apply Theorem \ref{segcountable}.
\end{proof}

\begin{definition} A topological space $X$ is said to be a \emph{Stone space} if $X$ is  Hausdorff, compact and totally disconnected.
\end{definition}

\begin{lemma}\label{ayuda} Let  $X$ be a $T_2$  compact  flow. If $L^{\r}(X)=\overline{\Omega^{\r}(X)}$,
then $\varepsilon(X^{\r})=\{U | L^{\r}(X)\subset U, U \in \t_X\}$.
\end{lemma}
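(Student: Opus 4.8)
The plan is to prove the claimed equality of families of open subsets by establishing the two inclusions separately.

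The inclusion $\varepsilon(X^{\r}) \subset \{U \in \t_X \mid L^{\r}(X) \subset U\}$ is immediate and uses nothing but the definition of the limit space: members of $\varepsilon(X^{\r})$ are open by definition, and since $L^{\r}(X) = L(X^{\r}) = \bigcap_{E \in \varepsilon(X^{\r})} E$, every $\r$-exterior open subset contains $L^{\r}(X)$.

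For the reverse inclusion I would take an open set $U$ with $L^{\r}(X) \subset U$ and verify directly that $U$ satisfies the defining condition of an $\r$-exterior subset, namely that for every $x \in X$ there is $r_0 \in \R$ with $r \cdot x \in U$ for all $r \geq r_0$. Arguing by contradiction, if this fails for some $x$ there is a net $t_{\delta} \to +\infty$ with $t_{\delta}\cdot x \in X \setminus U$ for all $\delta$. Since $X$ is compact and $X\setminus U$ is closed, $X\setminus U$ is compact, so the net $(t_{\delta}\cdot x)$ admits a subnet converging to some $q \in X\setminus U$. The corresponding subnet of indices still diverges to $+\infty$, so by the net description of the $\omega^{\r}$-limit set recalled in Section \ref{prelidin} we get $q \in \omega^{\r}(x) \subset \Omega^{\r}(X) \subset \overline{\Omega^{\r}(X)}$. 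By hypothesis $\overline{\Omega^{\r}(X)} = L^{\r}(X) \subset U$, hence $q \in U$, contradicting $q \in X\setminus U$. Therefore $U \in \varepsilon(X^{\r})$, and the two families coincide.

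There is no serious obstacle in this argument; the one point to handle with a little care is the passage to a convergent subnet and the verification that its limit lies in $\omega^{\r}(x)$ — this is exactly where compactness of $X$ enters, and it is the same extraction used elsewhere in the paper (e.g. in the proof of Theorem \ref{limit}). I also note that the hypothesis $L^{\r}(X) = \overline{\Omega^{\r}(X)}$ is used only through the inclusion $\omega^{\r}(x) \subset L^{\r}(X)$, so the $T_2$ assumption, while available, plays no essential role in this particular statement.
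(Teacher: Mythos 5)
Your proof is correct, but it follows a genuinely different route from the paper's. The paper establishes the nontrivial inclusion by invoking Proposition \ref{lem} (valid for locally compact regular flows, hence applicable here since $X$ is compact $T_2$): each point of a closed compact $K$ disjoint from $L^{\r}(X)=\overline{\Omega^{\r}(X)}$ has a closed neighbourhood whose complement is $\r$-exterior, a finite subcover then shows $X\setminus K$ is $\r$-exterior, and finally $K=X\setminus U$ is compact because $X$ is. You instead verify the defining condition of an $\r$-exterior set directly: if some trajectory $\varphi^x$ failed to be eventually in $U$, compactness of the closed set $X\setminus U$ would produce a cluster point $q\in\omega^{\r}(x)\cap(X\setminus U)$, contradicting $\omega^{\r}(x)\subset\overline{\Omega^{\r}(X)}=L^{\r}(X)\subset U$. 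Both arguments are sound; the subnet extraction and the fact that a subnet of a net diverging to $+\infty$ still diverges to $+\infty$ are exactly as in the definition of $\omega^{\r}$, so $q\in\omega^{\r}(x)$ is justified. What each buys: the paper's version recycles Proposition \ref{lem}, which is needed elsewhere (e.g.\ in Theorems \ref{cHausdorff} and \ref{escapar}) and isolates the reusable statement that complements of compacta missing $\overline{\Omega^{\r}(X)}$ are $\r$-exterior in the broader locally compact regular setting; your version is self-contained, shorter, and — as you correctly observe — uses only quasi-compactness of $X$, so the $T_2$ hypothesis (which the paper's route does need, to make $X$ locally compact regular for Proposition \ref{lem}) is genuinely superfluous for this particular lemma.
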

\begin{proof} We always have that $\varepsilon^{\r}(X)\subset \{U | L^{\r}(X)\subset U, U \in \t_X\}$.
In our case, since
$L^{\r}(X)=\overline{\Omega^{\r}(X)}$, by Proposition \ref{lem}, we have that if $K$ is a closed compact
subset and $K\cap L^{\r}(X) = \emptyset$, then $X\setminus K \in \varepsilon^{\r}(X)$. Taking into account that
$X$ is compact, if $L^{\r}(X)\subset U\in \t_X$, then $X\setminus U$ is compact and therefore $U\in \varepsilon^{\r}(X)$.

\end{proof}

\begin{theorem} Let  $X$ be a locally  path-connected, compact metric  flow. Then,
 $X$ is a  $\Co^{\r}$-complete flow  if and only if
 $C(X) =\overline{\Omega^{\r}(X)}$ (i.e. $C(X)$ is the minimal global attractor) and $C(X)$ is a Stone space.
\end{theorem}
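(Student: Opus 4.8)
The strategy is to combine the corollary immediately preceding this theorem with Lemma \ref{ayuda} and the profinite/Stone properties already established for completions. First I would prove the forward implication. Assume $X$ is $\Co^{\r}$-complete. Then $p_0\colon X\to \Co^{\r}(X)$ is a homeomorphism of flows, so in particular $\Co^{\r}(X)$ is itself a $T_2$ compact $\Co^{\r}$-complete flow. By the preceding corollary (applied to $X$, which is locally path-connected, connected? — here one must be slightly careful: the theorem statement omits connectedness, so instead I would invoke Theorem \ref{propcomplete}) we get $L^{\r}(X)=C(X)$ and, since $X$ is $T_2$ compact hence $T_3$, part (iii) of Theorem \ref{propcomplete} gives $L^{\r}(X)=\overline{\Omega^{\r}(X)}$; combining, $C(X)=\overline{\Omega^{\r}(X)}$. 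It remains to see $C(X)$ is a Stone space. Since $X$ is $\Co^{\r}$-complete, $C(X)=L^{\r}(X)=L(X^{\r})\cong \F(X^{\r})$ as spaces. Because $X$ is compact metric it is locally compact, and being locally path-connected it is locally path-connected; moreover compact metric gives locally compact at infinity, so Proposition \ref{masmasigual} shows $\F(X^{\r})$ is a profinite compact space, hence Hausdorff, compact and totally disconnected — that is, a Stone space.

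\textbf{Converse.} Assume $C(X)=\overline{\Omega^{\r}(X)}$ and $C(X)$ is a Stone space. Since $X$ is $T_2$ compact, Proposition \ref{the} gives $P(X)=L^{\r}(X)$, and since $X$ is $T_3$ (compact Hausdorff), Proposition \ref{cor} gives $L^{\r}(X)=P(X)$; I would first argue $C(X)=L^{\r}(X)$: one always has $C(X)\subset P(X)=L^{\r}(X)\subset \overline{\Omega^{\r}(X)}=C(X)$, so all these coincide and in particular $L^{\r}(X)=\overline{\Omega^{\r}(X)}=C(X)$ is a closed totally disconnected subspace of the compact Hausdorff locally path-connected space $X$. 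Now apply Lemma \ref{ayuda}: since $L^{\r}(X)=\overline{\Omega^{\r}(X)}$ and $X$ is $T_2$ compact, the externology $\varepsilon(X^{\r})$ equals $\{U\mid L^{\r}(X)\subset U,\ U\in\t_X\}$, i.e. it is precisely the externology of open neighborhoods of the closed totally disconnected set $D:=C(X)$. Then Theorem \ref{completeex} (with $X$ locally path-connected, compact Hausdorff, and $D$ closed totally disconnected) shows directly that $X^{\r}$ is $\Co$-complete, which is exactly the assertion that $X$ is a $\Co^{\r}$-complete flow.

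\textbf{Main obstacle.} The delicate point is the bookkeeping of hypotheses: the theorem drops the ``connected'' assumption present in the nearby corollary, so one cannot simply cite that corollary and must instead route the forward direction through Theorem \ref{propcomplete} (parts (i)--(iii)) together with Proposition \ref{masmasigual} to extract the Stone property. A second point requiring care is verifying that a compact metric, locally path-connected flow is ``locally compact at infinity'' in the sense needed for Proposition \ref{masmasigual}; this holds because every exterior open set here is the complement of a compact set and, in a compact metric space, one can shrink neighborhoods of the closed set $C(X)$ to ones with compact (indeed the whole space is compact) closures contained in the given neighborhood. Once these hypothesis-tracking issues are settled, both implications are short applications of the already-proven Theorem \ref{completeex}, Lemma \ref{ayuda}, Proposition \ref{masmasigual}, and Theorem \ref{propcomplete}.
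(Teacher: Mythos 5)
Your proposal is correct and follows essentially the same route as the paper: the forward direction extracts $C(X)=L^{\r}(X)=\overline{\Omega^{\r}(X)}$ from Theorem \ref{propcomplete} (the paper cites Theorem \ref{cHausdorff} together with \ref{propcomplete}, which amounts to the same thing) and then gets the Stone property from Lemma \ref{ayuda}, local compactness at infinity via normality, and Proposition \ref{masmasigual}; the converse is exactly the paper's appeal to Lemma \ref{ayuda} and Theorem \ref{completeex}, which you merely spell out in more detail. No gaps.
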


\begin{proof}
Suppose that  $X$ is a  $\Co^{\r}$-complete. By  Theorems \ref{cHausdorff} and \ref{propcomplete}, it follows that
$C(X)=L^{\r}(X) =\overline{\Omega^{\r}(X)},$ and by Lemma \ref{ayuda} $\varepsilon(X^{\r})=\{U | L(X)\subset U, U \in \t_X\}$.  Now since $X$ is $T_4$, one has that $X^{\r}$ is locally compact at infinity, that is, for any $E\in \varepsilon(X^{\r})$ there is
$E' \in \varepsilon(X^{\r})$ such that $\overline{E'} \subset E$. Applying Proposition \ref{masmasigual}, we obtain that $C(X)=L^{\r}(X)\cong\F^{\r}(X)$ is a profinite compact space; that is, a Stone space.
The converse follows from Lemma \ref{ayuda} and Theorem \ref{completeex}.

\end{proof}

\begin{remark} We also have a similar version of the results above simply by using the functors $\Co^{\l}, L^{\l}, {\bar L^{\l}}$ and
the notion of repulsor.
\end{remark}

\begin{example} For a Morse function  \cite{Milnor} $f\colon M \to \R$, where
$M$ is a compact $T_2$ Riemannian manifold, one has that the
opposite of the gradient of  $f$ induces a flow with a finite
number of critical points. In this case, we have that $M$ is
locally path-connected and the flow is $\r$-locally compact at infinity. Then we have that
 $L^{\r}(X)=C(X)=L^{\l}(X)$ is a finite set and $X$ is a $\Co^{\r}$-complete and
$\Co^{\l}$-complete flow.
\end{example}

\section{More completion functors for flows}

The authors think that it could be interesting to complete the study presented in this paper
with other posible completions. We suggest to work with $\r$-exterior flows and  the following  functors:

\medskip
a) The functor ${\check W}_{0}
\colon {\bf E^{\r}F} \to {\bf E^{\r}F}$.

Let $(X, \varepsilon(X))$ be an  exterior flow. Notice that we have the following natural transformation
$\omega_{\r}^0 (X)\to \F (X)$.
Taking the  push-out square
$$\xymatrix{L(X) \ar[r]^{e_0} \ar[d] & \omega_{\r}^0 (X)\ar[d]^{\inc_0}\\
X \ar[r]^-{p_0} & X\cup_{L(X)} \omega_{\r}^0 (X)}
$$
and taking similar topologies and externologies  we obtain the completion $ {\check W}_{0}(X)$
and a canonical transformation ${\check W}_{0}(X) \to \Co (X)$.

The authors  think that the study of   this new completion and the possible  relations between
 $\check W_0$-completions  and $\check C_0$-completions will give interesting properties and
 results.

\medskip
b) The functor ${\check C}\colon {\bf E^{\r}F} \to {\bf E^{\r}F}$.

 Given an exterior space $(X,\varepsilon(X))$, for every $E\in \varepsilon(X)$ one can take the
 set of connected componets of $E$ instead of the
 set of path-connected components of $E$. This gives a different notion of end point and
 the corresponding completion functor. Using these new constructions some of the
 above results can be reformulated removing the condition of being first countable at
 infinity.

\medskip
c) The functors $\bCo, \bC \colon {\bf E^{\r}F} \to {\bf E^{\r}F}$.

 In these cases,  for an exterior space $(X,\varepsilon(X))$ and for every $E\in \varepsilon(X)$ one can take the
 set of connected componets of $\overline{E}$ and the
 set of path-connected components of $\overline{E}$. We can take the bar-limit functor  and the correspondings
 end points to construct new completion functors in order to obtain new results about global attractors.

\end{document}